\documentclass[11pt]{amsart}

\usepackage{amssymb,amsmath}
\usepackage{bbm}
\usepackage{a4wide}
\usepackage{graphicx}

\newtheorem{theorem}{Theorem}
\newtheorem{prop}{Proposition}
\newtheorem{lemma}{Lemma}
\newtheorem{coro}{Corollary}
\newtheorem{fact}{Fact}
 
{
\theoremstyle{definition}

\newtheorem{remark}{Remark}
\newtheorem{example}{Example}
}

\newcommand{\ts}{\hspace{0.5pt}}
\newcommand{\nts}{\hspace{-0.5pt}}

\newcommand{\RR}{\mathbb{R}\ts}

\newcommand{\ZZ}{\mathbb{Z}}
\newcommand{\TT}{\mathbb{T}}
\newcommand{\NN}{\mathbb{N}}
\newcommand{\FF}{\mathbb{F}}
\newcommand{\rA}{\nts\mathrm{A}\nts}
\newcommand{\rF}{\ts \mathrm{F}}
\newcommand{\cS}{\mathcal{S}}
\newcommand{\cR}{\mathcal{R}}
\newcommand{\cG}{\mathcal{G}}

\newcommand{\one}{\mathbbm{1}}

\newcommand{\exend}{\hfill$\Diamond$}

\DeclareMathOperator{\lcm}{lcm\ts}
\DeclareMathOperator{\mgcd}{mgcd}
\DeclareMathOperator{\ord}{ord}

\DeclareMathOperator{\trace}{tr}
\DeclareMathOperator{\card}{card\ts}
\DeclareMathOperator{\GL}{GL}
\DeclareMathOperator{\PGL}{PGL}
\DeclareMathOperator{\SL}{SL}
\DeclareMathOperator{\diag}{diag}
\DeclareMathOperator{\fix}{Fix}
\DeclareMathOperator{\per}{\kappa}
\DeclareMathOperator{\Mper}{per}
\DeclareMathOperator{\Mat}{Mat}

\begin{document}

\title[]
{Orbit structure and (reversing) symmetries \\[1mm]
of toral endomorphisms on rational lattices}

\author{Michael Baake}

\author{Natascha Neum\"{a}rker}
\address{Fakult\"at f\"ur Mathematik, Universit\"at Bielefeld, \newline
\hspace*{\parindent}Postfach 100131, 33501 Bielefeld, Germany}
% \email{mbaake@math.uni-bielefeld.de}
% \urladdr{http://www.math.uni-bielefeld.de/baake}

\author{John A.~G.~Roberts}
\address{School of Mathematics and Statistics, 
University of New South Wales, \newline
\hspace*{\parindent}Sydney, NSW 2052, Australia}
% \email{jag.roberts@unsw.edu.au}
% \urladdr{http://www.maths.unsw.edu.au/\~{}jagr}

\begin{abstract} 
   We study various aspects of the dynamics induced by integer
   matrices on the invariant rational lattices of the torus in
   dimension $2$ and greater.  Firstly, we investigate the orbit
   structure when the toral endomorphism is not invertible on the
   lattice, characterising the {\em pretails} of eventually periodic
   orbits. Next we study the nature of the symmetries and reversing
   symmetries of toral automorphisms on a given lattice, which has
   particular relevance to (quantum) cat maps.
\end{abstract}

\maketitle

\section{Introduction}

Toral automorphisms or \emph{cat maps}, by which we mean the action of
matrices $M \in \GL(d,\ZZ)$ on the $d$-torus $\TT^{d}$, are a widely
used and versatile class of dynamical systems, see \cite{WalBook,KH}
for some classic results in the context of ergodic theory. Of
particular interest are the hyperbolic and quasi-hyperbolic ones,
which are characterised by having no root of unity among their
eigenvalues.  All periodic orbits of such automorphisms lie on the
rational (or finite) invariant lattices $L_{n} = \{ x \in \TT^{d} \mid
nx = 0 \bmod 1 \}$, which are also known as the $n$-division
points. One can encode the possible periods of a toral automorphism
$M$ on $\TT^{d}$ via the dynamical zeta function in a systematic way,
which is always a rational function \cite{BLP,DEI}. The literature on
classifying periodic orbits of toral automorphisms when $d=2$ is vast
(compare \cite{DF,G,PV} and references therein). An extension beyond
$d=2$ is difficult due to the fact that the conjugacy problem between
integer matrices is then much harder because (unlike $d=2$) no
complete set of conjugacy invariants mod $n$ is known. Therefore, our
focus will also be on $d=2$, with occasional extensions to higher
dimensions.

The larger ring $\Mat(d,\ZZ)$ of toral endomorphisms (which includes
integer matrices without integer inverses) has received far less
attention \cite{ATW, BV, BLP}, particularly those in the complement of
$\GL(d,\ZZ)$. Note that the resulting dynamics induced by $M \in
\Mat(d,\ZZ)\setminus \GL(d,\ZZ)$ on a finite lattice $L_{n}$ may or
may not be invertible.  In the latter case, beyond periodic orbits,
there exist \emph{eventually periodic} orbits which possess points
that lead into a periodic orbit.  We call these points and the
periodic point to which they attach the `pretails' to the periodic
orbit (see Eq.~\eqref{eq:pretail} for a formal definition). The action
of $M$ induces a directed graph on $L_n$ (e.g.\ see our three figures
below).  Alternatively, the pretails can be combined to form a rooted
\emph{tree} which is a characteristic attribute to any pair $(M,
L_{n})$.

As well as their interest from a mathematical viewpoint, toral
automorphisms also have been well-studied from a physics perspective,
in particular as quantum cat maps (see \cite{KM,DW,KR} and references
therein). Here, the action of the integer matrix on a rational lattice
$L_{n}$, for some $n$, is all-important as quantum cat maps and their
perturbations are built from (classical) cat maps and their
perturbations \emph{restricted to a particular rational lattice}
(called the Wigner lattice in this instance). There has been recent
interest in dealing with so-called pseudo-symmetries of quantum cat
maps that are manifestations of local symmetries of cat maps
restricted to some rational lattice \cite{KM,DW,KR}. Although, in
the context of quantisation, matrices from the group $\mathrm{Sp}
(2d,\ZZ)$ play the key role, we prefer to work with the
larger group of unimodular integer matrices and consider the
former as a special case.

\medskip

The main aims of this paper are twofold:\ (i) to elucidate the orbit
structure of toral endomorphisms on rational lattices, equivalently
the periodic orbits together with the related pretail tree structure;
(ii) to further characterise the nature of symmetries or (time)
reversing symmetries of toral automorphisms, these being automorphisms
of the torus (or of a rational lattice) that commute with the cat map,
respectively conjugate it into its inverse. 

We expand a little on our results, where we refer to the actual
formulation below in the paper. The results are readable without 
the surrounding notational details.

With respect to aim (i), Section \ref{sec:orbit} characterises the
splitting of $L_{n}$ into periodic and eventually periodic points
under a toral endomorphism $M$. Every periodic point has a pretail
graph isomorphic to that of the fixed point $0$ (Corollary
\ref{coro:p-tree}), which is trivial if and only if $M$ is invertible
on $L_{n}$. In general, the pretail tree codes important information
on the action of $M$. One question in this context is whether all
\emph{maximal} pretails have the same length, for which we give a
partial answer via a sufficient condition on $\ker(M)$ in Proposition
\ref{prop:pre-length}. Given $M$, the lattice $L_{n}$ can be
decomposed into into 2 invariant submodules, one of which captures the
invertible part of $M$ and the other the nilpotent part. This way, we
are able to characterise the dynamics that is induced by $M$ on
$L_{n}$ in the case of $n=p^r$, $p$ prime, in Corollary
\ref{c:blockdiagonal} and Lemma \ref{l:mipoDecomp}.

Our contribution towards aim (ii) continues the investigations from
\cite{BRcat,BRtorus,BRW}. The key quantity for integer matrices of
dimension $2$ is the $\mgcd$ (see Eq.~\ref{eq:mgcd-def} below), and
one consequence of \cite[Thm.~2]{BRW} is that $M\in \SL (2,\ZZ)$ is
always conjugate to its inverse on $L_{n}$, for each $n\in\NN$. The
conjugating element -- called a reversing symmetry or reversor -- is
an integer matrix that has an integer matrix inverse on $L_{n}$, which
typically depends on $n$. In this way, any $\SL (2,\ZZ)$ matrix that
fails to be conjugate to its inverse on the torus (e.g.\
$M=\left( \begin{smallmatrix} 4 & 9 \\ 7 & 16 \end{smallmatrix}
\right)$ from \cite[Ex.~2]{BRcat}) is still conjugate to its inverse
on every rational lattice.  In \cite{BRW}, we did not consider the
nature of the reversor on the lattice. Theorem \ref{thm:local-rev} of
Section \ref{sec:revsym} establishes that it is an
orientation-reversing involution, what is called an anticanonical
(time-reversal) symmetry in the language of \cite{KM}.  Section
\ref{sec:4.2} uses normal forms of $\GL(2,\FF_{p})$ to characterise
the symmetries and possible reversing symmetries of such matrices; the
underlying structure of the conjugacy classes of $\GL (2,\FF_{p})$ is
summarised in Table~\ref{tab:finite-field}.  The symmetry structure
has some extensions to higher dimensions (Section \ref{sec:4.3}) and
to general modulus $n$ (Section \ref{sec:4.4}). Section \ref{sec:4.5}
presents some results for the case when $M \in \GL(d,\FF_{p})$ has a
root in the same group.

The structure of the paper is as follows. Section \ref{sec:prelim}
summarises some properties of integer matrices that we use later in
the paper, with some reformulations or slight generalisations that we
find useful. In particular, throughout the paper, we formulate the
results for arbitrary dimension whenever it is possible without extra
complications, though this is not our main focus.  As described,
Section \ref{sec:orbit} addresses aim (i) above, while Section
\ref{sec:revsym} deals with aim (ii). In the Appendix, we briefly
discuss two classic examples of toral automorphisms for $d=2$ and some
aspects of their dynamics.

\section{Preliminaries and powers of integer matrices} 
\label{sec:prelim}

The purpose of this section is to summarise important properties of
and around integer matrices that are needed later on, with focus
on those that are not standard textbook material. At the same time, we
introduce our notation. For general background on integer
matrices and their connections to algebraic number theory, we
refer to the classic text by Taussky \cite{Tau}.

\subsection{Lattices, rings and groups}

The most important lattices on the torus $\TT^{d}=\RR^{d}/\ZZ^{d}$,
which is a compact Abelian group, consist of the $n$-division points
\begin{equation} \label{define-L}
   L_n \, := \,  \{ x\in\TT^{d} \mid nx=0 \; \mbox{\rm (mod $1$)}\}
   \, = \, \bigl\{ \big( \tfrac{k_{1}}{n},\ldots,
   \tfrac{k_{d}}{n}\big)
   \mid 0 \le k_{i} < n \mbox{ for all $1\le i\le d$} \bigr\}\ts , 
\end{equation}
with $n\in\NN$.  Clearly, the $L_{n}$ are invariant under toral
endomorphisms (with the action of the representing matrices taken mod
$1$). It is sometimes easier to replace $L_n$ by the set $\tilde{L}_n
:= \{ (k_{1},\ldots, k_{d})\mid 0 \le k_{i} < n \}$, with the
equivalent action of $M$ defined mod $n$.  This also applies to
various theoretical arguments involving modular arithmetic.
Consequently, we use $L_n$ (with action of $M$ mod $1$) and
$\tilde{L}_n$ (with action mod $n$) in parallel.

Our discussion will revolve around the residue class ring $\ZZ/n\ts
\ZZ$ with $n\in\NN$, which is a principal ideal ring, but not a
domain, unless $n=p$ is a prime. In the latter case, $\ZZ/p\ts
\ZZ=\FF_{p}$ is the finite field with $p$ elements, while the ring has
zero divisors otherwise.  For general $n$, the unit group
\[
   (\ZZ/n\ts \ZZ)^{\times} \, = \,
   \{ 1\le m \le n \mid \gcd (m,n) = 1 \}
\]
is an Abelian group (under multiplication) of order
$\phi (n)$, where $\phi$ is Euler's totient function from elementary
number theory \cite{Hasse}. In general, it is not a cyclic group.

The integer matrices mod $n$ form the finite ring $\Mat(d,\ZZ/n\ts\ZZ)$
of order $n^{d^2}$. The invertible elements in it form the group
$\GL (d, \ZZ/n \ts \ZZ) = \{ M\in\Mat(d,\ZZ/n\ts\ZZ)\mid
\det (M) \in (\ZZ/n\ts \ZZ)^{\times} \}$. If $n=p_{1}^{r^{}_{1}}\cdots
p_{\ell}^{r^{}_{\ell}}$ is the standard prime decomposition, one
finds
\begin{equation} \label{eq:GL-order}
  \big\lvert \GL (d, \ZZ/n\ts \ZZ) \big\rvert \, = \,
  n^{d^{2}} \prod_{j=1}^{\ell} \frac
  {\big\lvert \GL (d,\FF_{p^{}_{j}}) \big\rvert}
  {p_{j}^{d^{2}}} \, ,
\end{equation}
where
\begin{equation}\label{eq:GL-Fp}
   \big\lvert \GL (d,\FF_{p}) \big\rvert \, = \,
   (p^{d} - 1) (p^{d} - p) \cdot \ldots \cdot
   (p^{d} - p^{d-1})
\end{equation}
is well-known from the standard literature \cite{Lang,LN}. Formula
\eqref{eq:GL-order} follows from the corresponding one for $n=p^r$ via
the Chinese remainder theorem, while the simpler prime power case is a
consequence of the observation that each element of a non-singular
matrix $M$ over $\ZZ/p^s\ts\ZZ$ can be covered (independently of all
other matrix elements) by $p$ elements in $\ZZ/p^{s+1}\ts \ZZ$ without
affecting its non-singularity.

Let us finally mention that $\SL (n, \ZZ/n\ts\ZZ)$, the
subgroup of matrices with determinant $1$, is a normal 
subgroup (it is the kernel of $\det\!:\, \GL (n, \ZZ/n\ts\ZZ)
\longrightarrow (\ZZ/n\ts\ZZ)^{\times}$). The factor group is
\[
   \GL (n, \ZZ/n\ts\ZZ)/\SL (n, \ZZ/n\ts\ZZ)
   \, \simeq \, (\ZZ/n\ts\ZZ)^{\times}
\]
and thus has order $\phi(n)$.

\subsection{Orbit counts and generating functions}

The orbit statistics of the action of a matrix $M\in\Mat (d,\ZZ)$ on
the lattice $L_n$ is encapsulated in the polynomial
\begin{equation} \label{eq:def-poly}
   Z_{n} (t) \, \, = \, \prod_{m\in\NN}
    (1-t^{m})^{c_{m}^{(n)}} , 
\end{equation}
where $c_{m}^{(n)}$ denotes the number of $m$-cycles of $M$ on $L_n$.
Recall that, if $a_m$ and $c_m$ denote the fixed point and orbit count
numbers of $M$ (dropping the upper index for a moment), they are
related by
\begin{equation} \label{a-from-c}
   a_m \, = \, \sum_{d\ts | m} d\, c_d 
   \quad \text{and} \quad
    c_m \, = \, \frac{1}{m} \sum_{d\ts |  m} 
   \mu \big( \tfrac{m}{d}\big) \ts  a_d\ts ,
\end{equation}
where $\mu(k)$ is the M\"obius function from elementary number theory
\cite{Hasse}.

Despite the way it is written, $Z_n$ is a \emph{finite} product and
defines a polynomial of degree at most $n^{d}$.  Note that the degree
of $Z_n$ can be smaller than $n^{d}$ (as the matrix $M$ need not be
invertible on $L_n$), but $Z_{n} (t)$ is always divisible by $(1-t)$,
because $0$ is a fixed point of every $M$. The polynomials $Z_{n}$ are
closely related \cite{BRW,Diss} to the zeta function of toral
endomorphisms, which can be calculated systematically; compare
\cite{BLP} and references therein. Dynamical zeta functions give
access to the distribution and various asymptotic properties of
periodic orbits \cite{DEI,Ruelle}, and also relate to topological
questions; compare \cite{Fel} for a systematic exposition of the
latter aspect in a more general setting. Further aspects on the
asymptotic distribution of orbit lengths on prime lattices can
be found in \cite{Keat}.

\subsection{Matrix order on lattices and plateau phenomenon}
Assume that $M$ is invertible on $L_n$ (hence also on
$\tilde{L}_{n}$). Then, its order is given by
\begin{equation} \label{per-one}
  \ord\ts (M,n) \, := \;
  \gcd\ts \{ m\in\NN_{0} \mid M^m \equiv\one \; \bmod{n} \}\ts .
\end{equation}
Clearly, $\ord\ts (M,1)=1$ in this setting. When $M$ is not invertible
on $L_n$, the definition results in $\ord\ts (M,n)=0$; otherwise,
$\ord\ts (M,n)$ is the smallest $m\in\NN$ with $M^m=\one$ mod $n$.

Let $M\in\GL(d,\ZZ)$ be arbitrary, but fixed. To determine $\ord(M,n)$
for all $n\ge 2$, it suffices to do so for $n$ an arbitrary prime
power, since the Chinese remainder theorem \cite{Hasse} gives
\begin{equation} \label{eq:power-decomp}
   \ord (M,n) \, = \, \lcm \bigl( 
    \mbox{$\ord (M,p_{1}^{r^{}_{\nts 1}}),
    \ldots, \ord (M,p_{\ell}^{r^{}_{\! \ell}})$} \bigr)
\end{equation}
when $n=p_{1}^{r^{}_{\nts 1}}\cdots p_{\ell}^{r^{}_{\!
\ell}}$ is the prime decomposition of $n$. It is clear that
$\ord (M, p^r) | \ord (M, p^{r+1})$ for all $r\in\NN$,
see also \cite[Lemma~5.2]{BV}.

Let us now assume that $M \in \Mat(d,\ZZ)$ is \emph{not} of finite
order, meaning that $M^{k} \ne \one$ for all $k\in\NN$, which excludes
the finite order elements of $\GL(d,\ZZ)$.  If $p$ is a prime, we then
obtain the unique representation
\begin{equation} \label{eq:power-rep}
   M^{\ord (M,p)} \, = \, \one + p^{s} B
\end{equation}
with $s\in\NN$ and an integer matrix $B \not\equiv 0$ mod $p$.
Starting from this representation, an application of the
binomial theorem for powers of it, in conjunction with the
properties of the binomial coefficients mod $p$, gives the 
following well-known result.

\begin{prop} \label{prop:powers-modp}
  Let\/ $M\in\Mat (d,\ZZ)$ be a matrix that is not of finite order.
  Fix a prime $p$ that does not divide $\det (M)$, and let $s$ be
  defined as in Eq.~{\nts}\eqref{eq:power-rep}.  

  When $p$ is odd or when $s\ge 2$, one has\/ $\ord (M, p^{i}) = \ord
  (M, p)$ for $1\le i \le s$, together with\/ $\ord (M, p^{s+i}) =
  p^{i} \ts \ord (M, p^{s})$ for all\/ $i\in\NN$.

  In the remaining case, $p=2$ and $s=1$, one either
  has\/ $\ord (M, 2^{r}) = 2^{r-1} \ts \ord (M,2)$ for
  all $r\in\NN$, or there is an integer\/ $t\ge 2$ so
  that\/ $\ord (M, 2^{i}) = 2\ts \ord (M, 2)$ for\/
  $2\le i \le t$ together with\/ $\ord (M, 2^{t+i})
  = 2^{i} \ts \ord (M, 4)$ for all\/ $i\in\NN$.
  \qed
\end{prop}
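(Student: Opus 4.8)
The plan is to reduce the claim to a single statement about the $p$-adic valuation of $(\one+p^{s}B)^{m}-\one$. Since $p\nmid\det(M)$, the matrix $M$ is invertible modulo every power of $p$, and $e:=\ord(M,p)$ divides $\ord(M,p^{k})$ for each $k$ (a relation $M^{m}\equiv\one$ mod $p^{k}$ forces $M^{m}\equiv\one$ mod $p$). Hence I may write $\ord(M,p^{k})=e\ts N_{k}$, where $N_{k}$ is the least $m\in\NN$ with $(\one+p^{s}B)^{m}\equiv\one \bmod p^{k}$. Because all the matrices involved are polynomials in the single matrix $B$, they commute, so the binomial theorem yields $(\one+p^{s}B)^{m}-\one=\sum_{j\ge 1}\binom{m}{j}p^{sj}B^{j}$. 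Writing $v_{p}$ for the (entrywise minimal) $p$-adic valuation, the whole problem becomes the determination of $v_{p}\bigl((\one+p^{s}B)^{m}-\one\bigr)$, since $(\one+p^{s}B)^{m}\equiv\one\bmod p^{k}$ is equivalent to this valuation being at least $k$.

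First I would treat the case where $p$ is odd or $s\ge 2$. The leading term $m\ts p^{s}B$ has valuation exactly $s+v_{p}(m)$, using $v_{p}(B)=0$ (this is where $B\not\equiv 0$ enters). For $j\ge 2$ I would bound the remaining terms via $v_{p}\binom{m}{j}\ge v_{p}(m)-v_{p}(j)$, so that the $j$-th term has valuation at least $v_{p}(m)+sj-v_{p}(j)$; here only $v_{p}(B^{j})\ge 0$ is used, never that it vanishes. The elementary inequality $s(j-1)>v_{p}(j)$ — valid precisely when $p$ is odd or $s\ge 2$ — then shows every higher term strictly exceeds the leading valuation, whence $v_{p}\bigl((\one+p^{s}B)^{m}-\one\bigr)=s+v_{p}(m)$. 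Consequently $N_{k}=p^{\max(0,\ts k-s)}$, which reproduces $\ord(M,p^{i})=e$ for $1\le i\le s$ and $\ord(M,p^{s+i})=p^{i}\ord(M,p^{s})$ for $i\in\NN$.

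The case $p=2$, $s=1$ is the main obstacle, because the $j=2$ term $2m(m-1)B^{2}$ now has the \emph{same} valuation as the leading term once $m$ is even, so the clean formula breaks down and $B^{2}\bmod 2$ starts to matter. I would handle it by repeated squaring. First, the same leading-term estimate shows that raising to an odd power does not change the valuation, so $v_{2}\bigl((\one+2B)^{m}-\one\bigr)$ depends only on $v_{2}(m)$; set $g(a):=v_{2}\bigl((\one+2B)^{2^{a}}-\one\bigr)$. The key lemma to prove is that whenever the current valuation $g$ satisfies $g\ge 2$, one squaring step increases it by exactly $1$, since in $(\one+2^{g}W)^{2}=\one+2^{g+1}\bigl(W+2^{g-1}W^{2}\bigr)$ the correction $2^{g-1}W^{2}$ vanishes mod $2$ while $W\not\equiv 0$. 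As $g(0)=1$ and $g(1)=2+v_{2}(B+B^{2})\ge 2$, this yields $g(a)=g(1)+(a-1)$ for all $a\ge 1$.

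Finally I would read off the two alternatives. Put $t:=g(1)=2+v_{2}(B+B^{2})\ge 2$, so that $\ord(M,2^{k})=2^{a_{k}}e$ with $a_{k}$ the least $a$ satisfying $g(a)\ge k$. When $t=2$ this gives $\ord(M,2^{r})=2^{r-1}\ord(M,2)$ for every $r\in\NN$, the first alternative. When $t\ge 3$ it gives the plateau $\ord(M,2^{i})=2\ts\ord(M,2)$ for $2\le i\le t$, followed by $\ord(M,2^{t+i})=2^{i}\ord(M,4)$ for $i\in\NN$ (using $\ord(M,4)=2e$, valid since $t\ge 2$), which is the second alternative. The only genuinely delicate step is the squaring lemma of the previous paragraph; everything else is bookkeeping with valuations.
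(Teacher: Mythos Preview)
Your argument is correct and follows precisely the route the paper indicates but does not spell out: the paper states the result without proof (it ends with a \qed\ and refers to \cite[Thms.~5.3 and 5.4]{BV}), merely remarking that it comes from ``an application of the binomial theorem for powers of it, in conjunction with the properties of the binomial coefficients mod $p$''. You have supplied exactly those details --- the valuation estimate via $v_{p}\binom{m}{j}\ge v_{p}(m)-v_{p}(j)$ for the generic case and the repeated-squaring analysis with $g(a)=v_{2}\bigl((\one+2B)^{2^{a}}-\one\bigr)$ for $p=2$, $s=1$ --- so there is nothing to compare beyond noting that your write-up is more explicit than the paper's.
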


In what follows, we will refer to the structure described in 
Proposition~\ref{prop:powers-modp} as the \emph{plateau phenomenon}.
Such a plateau can be absent ($p$ odd with $s=1$, or the first case
for $p=2$), it can be at the beginning ($p$ odd with $s\ge 2$),
or it can occur after one step ($p=2$ when $t\ge 2$ exists as
described), but it cannot occur later on.

Proposition~\ref{prop:powers-modp} is a reformulation of
\cite[Thms.~5.3 and 5.4]{BV}, which are originally stated for
$M\in\GL(2,\ZZ)$. As one can easily check, the proofs do not depend on
the dimension. Similar versions or special cases were also given in
\cite{BF} and \cite{S} (with focus on $\SL(2,\ZZ)$-matrices), in
\cite{PV} (for the order of algebraic integers), in \cite{W} (for the
Fibonacci sequence), in \cite{BuS} (for linear quadratic recursions)
and in \cite{E} and \cite{Wa} (for general linear recursions).
Let us also mention that, based on the generalised Riemann
hypothesis, Kurlberg has determined a lower bound on the order
of unimodular matrices mod $N$ for a density $1$ subset
of integers $N$ in \cite{Kurl}.

\subsection{Powers of integer matrices}\label{sec:pim}

Consider a matrix $M\in\Mat(d,\ZZ)$ with $d\ge 2$ and characteristic
polynomial $P^{}_{M} (z) = \det (z\one - M)$, which (following
\cite{Wa}) we write as
\[
   P^{}_{M} (z) \, = \, z^{d} - c^{}_{1} z^{d-1}
   - c^{}_{2} z^{d-2} - \ldots - c^{}_{d-1} z 
   - c^{}_{d} \ts ,
\]
so that $c^{}_{d} = (-1)^{d+1}\det (M)$. Let us define a recursion by
$u^{}_{0} = u^{}_{1} = \ldots = u^{}_{d-2}=0$ and $u^{}_{d-1} = 1$
together with
\begin{equation} \label{eq:rec-def} 
   u^{}_{m} \, = \, \sum_{i=1}^{d} c^{}_{i} \ts
   u^{}_{m-i} \, = \,
   c^{}_{1} \ts u^{}_{m-1} + c^{}_{2} \ts u^{}_{m-2}
   + \ldots + c^{}_{d} \ts u^{}_{m-d}
\end{equation}
for $m\ge d$. This results in an integer sequence. Moreover,
when $c^{}_{d} \ne 0$, we also define
\[
   u^{}_{m} \, = \, c^{-1}_{d} \ts ( u^{}_{m+d} -
   c^{}_{1} \ts u^{}_{m+d-1} - \ldots - 
   c^{}_{d-1} \ts u^{}_{m+1} )
\]
for $m \le -1$. In particular, since $d\ge 2$, one always has
$u^{}_{-1} = 1/c^{}_{d}$ and $u^{}_{-2} = - c^{}_{d-1}/c^{2}_{d}$,
while the explicit form of $u_{m}$ with $m<-2$ depends on $d$.  Note
that the coefficients with negative index are rational numbers in
general, unless $\lvert c^{}_{d} \rvert = 1$.

The Cayley-Hamilton theorem together with \eqref{eq:rec-def} can be
used to write down an explicit expansion of powers of the matrix $M$
in terms of $M^k$ with $0\le k\le d-1$,
\begin{equation} \label{eq:matrix-power}
   M^{m} \, = \, \sum_{\ell=0}^{d-1} \gamma^{(m)}_{\ell} \ts M^{\ell} ,
\end{equation}
where the coefficients satisfy $\gamma^{(m)}_{\ell} = \delta^{}_{m,\ell}$
(for $0\le \ell,m \le d-1$) together with the recursion
\begin{equation}\label{eq:coeff-rec}
%   \gamma^{(n+1)}_{\ell} = \begin{cases}
%    c^{}_{d} \, \gamma^{(n)}_{d-1} , &  \text{if $\ell=0$} , \\[1mm]
%    c^{}_{d-\ell} \, \gamma^{(n)}_{d-1} + \gamma^{(n)}_{\ell-1} ,
%    & \text{if $1 \le \ell \le d-1$} ,   \end{cases}
    \gamma^{(n+1)}_{\ell} =
    c^{}_{d-\ell} \, \gamma^{(n)}_{d-1} + \gamma^{(n)}_{\ell-1} ,
\end{equation}
for $n \ge d-1$ and $0\le \ell \le d-1$, where $\gamma^{(n)}_{-1} :=
0$.  In particular, $\gamma^{(d)}_{\ell} = c^{}_{d-\ell}$. The
coefficients are explicitly given as
\begin{equation} \label{eq:power-coeff}
   \gamma^{(m)}_{\ell} \, = \,
   \sum_{i=0}^{\ell} c^{}_{d-i} \ts u^{}_{m-\ell-1+i}
   \, = \, u^{}_{m +d -\ell-1}\, - \! \sum_{i=1}^{d-\ell-1}
   c^{}_{d-\ell-i}\ts u^{}_{m -1+i} \ts ,
\end{equation}
where $m\ge d$ and the second expression follows from the first by
\eqref{eq:rec-def}.  Formulas \eqref{eq:matrix-power} and
\eqref{eq:power-coeff} can be proved by induction from $M^{d} =
c^{}_{1} M^{d-1} + c^{}_{2} M^{d-2} + \ldots c^{}_{d-1} M + c^{}_{d}
\one$. Eq.~\eqref{eq:matrix-power} holds for all $m\ge 0$
in this formulation.

When $\det (M) \ne 0$, the representation \eqref{eq:power-coeff} also
holds for $m<d$, as follows from checking the cases $0\le m < d$
together with a separate induction argument for $m<0$. In particular,
one then has
\[
\begin{split}
   M^{-1} = \; & c^{}_{d} \ts u^{}_{-2} \one +
   (c^{}_{d-1} \ts u^{}_{-2} + c^{}_{d} \ts u^{}_{-3}) M +
   (c^{}_{d-2} \ts u^{}_{-2} + c^{}_{d-1} \ts u^{}_{-3}
    + c^{}_{d} \ts u^{}_{-4}) M^{2} \\
   & + \ldots +
   (c^{}_{2} \ts u^{}_{-2} + c^{}_{3} \ts u^{}_{-3}
   + \ldots + c^{}_{d} \ts u^{}_{-d}) M^{d-2}
   + u^{}_{-1} M^{d-1} ,
\end{split}
\]
which is again an integer matrix when $\lvert c^{}_{d} \rvert = 1$.

\subsection{Results for $d=2$}
Let us look at matrices from $\Mat (2,\ZZ)$ more closely, and derive
one important result by elementary means. Consider
$M=\left(\begin{smallmatrix} a & b \\ c & d
  \end{smallmatrix} \right)$, set $D:= \det (M)$, $T:= \trace (M)$ and
define the matrix gcd (or mgcd for short) as
\begin{equation}\label{eq:mgcd-def}
   \mgcd (M) := \gcd (b,c,d-a) \ts ,
\end{equation}
which is another invariant under $\GL (2,\ZZ)$ conjugation.  Its
special role becomes clear from the following result, which is a
reformulation of \cite[Lemma~2 and Thm.~2]{BRW}. This will lead to
Corollary~\ref{coro:BRW} below.

\begin{lemma}\label{lem:mgcd}
  Two matrices\/ $M, M' \in \Mat (2, \ZZ)$ that are\/
  $\GL(2,\ZZ)$-conjugate possess the same mgcd,
  as defined in Eq.~\eqref{eq:mgcd-def}. More generally,
  the reductions modulo\/ $n$ of\/ $M$ and\/ $M'$ are\/ 
  $\GL (2,\ZZ/n\ZZ)$-conjugate for all\/ $n\ge 2$ if and
  only if the two matrices share the same trace, determinant
  and mgcd.  \qed
\end{lemma}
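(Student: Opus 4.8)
The plan is to derive both assertions from a single observation, the characterisation
\[
   m \mid \mgcd(M) \iff M \equiv \lambda\one \ (\mathrm{mod}\ m)
   \ \text{for some}\ \lambda\in\ZZ ,
\]
valid for every $M=\left(\begin{smallmatrix} a & b\\ c & d\end{smallmatrix}\right)\in\Mat(2,\ZZ)$ and every $m\in\NN$: the matrix $M-\lambda\one$ has off-diagonal entries $b,c$ and diagonal entries $a-\lambda,\,d-\lambda$, and these are all divisible by $m$ for a suitable $\lambda$ precisely when $m$ divides each of $b$, $c$ and $d-a$. Being scalar modulo $m$ is manifestly preserved under $\GL(2,\ZZ/m\ZZ)$-conjugation, since $g(\lambda\one)g^{-1}=\lambda\one$. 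Hence the set of $m$ for which $M$ is scalar mod $m$ is a conjugacy invariant, and so is its maximum, the mgcd; this proves the first sentence. The same remark gives the forward implication of the second sentence: trace and determinant are conjugacy invariants modulo every $n$, so equality of the reductions forces $\trace(M)\equiv\trace(M')$ and $\det(M)\equiv\det(M')$ mod $n$ for all $n$, hence equality over $\ZZ$, while scalarness mod $m$ agreeing for all $m$ forces $\mgcd(M)=\mgcd(M')$.

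For the reverse implication I would assume that $M,M'$ share $T:=\trace$, $D:=\det$ and $g:=\mgcd$, and prove $\GL(2,\ZZ/n\ZZ)$-conjugacy of the reductions for every $n$. By the Chinese remainder theorem it suffices to treat $n=p^{r}$ a prime power, the relevant local data being $T\bmod p^{r}$, $D\bmod p^{r}$ and $t:=\min(v_{p}(g),r)$, where $v_{p}$ is the $p$-adic valuation; agreement of these for all $p^{r}$ is equivalent to agreement of $T,D\in\ZZ$ and of $g$, which is what makes the stated equivalence exact. Writing $\lambda$ for the $(1,1)$-entry of $M$, the divisibilities $p^{t}\mid b,c,(d-a)$ let me write $M\equiv \lambda\one+p^{t}N\pmod{p^{r}}$ with $N=\left(\begin{smallmatrix} 0 & \beta\\ \gamma & \varepsilon\end{smallmatrix}\right)$ and $\gcd(\beta,\gamma,\varepsilon)$ a unit mod $p$, i.e.\ $N$ non-scalar over $\FF_{p}$.

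The engine is then a cyclic-vector argument: a $2\times2$ matrix $N$ over $\ZZ/p^{s}\ZZ$ that is non-scalar mod $p$ admits a vector $v$ with $\det[\,v\mid Nv\,]$ a unit (one takes $v=e_{1}$, $e_{2}$ or $e_{1}+e_{2}$ according to which of $\gamma,\beta,\varepsilon$ is a unit), so $N$ is $\GL(2,\ZZ/p^{s}\ZZ)$-conjugate to the companion matrix $\left(\begin{smallmatrix} 0 & -\nu\\ 1 & \tau\end{smallmatrix}\right)$ of its characteristic polynomial, with $\tau=\trace N$, $\nu=\det N$. Since conjugation commutes with the central shift $\lambda\one$, this brings $M$ into the normal form $\left(\begin{smallmatrix} \lambda & -p^{t}\nu\\ p^{t} & \lambda+p^{t}\tau\end{smallmatrix}\right)$, which has trace $2\lambda+p^{t}\tau=T$, determinant $\lambda^{2}+\lambda p^{t}\tau+p^{2t}\nu=D$ and mgcd exactly $p^{t}$. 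It remains to check that the conjugacy class of this form depends only on $(T,D,t)$ and not on the auxiliary data $(\lambda,\tau,\nu)$; eliminating $\tau$ and $\nu$ through the trace and determinant relations expresses everything via $\lambda$ and the characteristic polynomial $P(z)=z^{2}-Tz+D$, and one verifies that admissible choices of $\lambda$ differ in a way absorbed by an explicit conjugation.

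The main obstacle is the degenerate stratum, where no cyclic vector is available: the scalar case $t=r$ (so $M\equiv\lambda\one$) and, more delicately, the residual freedom in $\lambda$ for $p=2$. Here equality of $T$ and $D$ \emph{modulo $p^{r}$ alone} does not pin down the scalar, since for $p=2$ and $r\ge2$ the two scalars $\lambda$ and $\lambda+2^{r-1}$ share trace and determinant mod $2^{r}$. The resolution is to use the \emph{exact} integer values: writing $d-a=p^{t}k$, $b=p^{t}\beta$, $c=p^{t}\gamma$ over $\ZZ$ and comparing the integer identities $T_{M}=T_{M'}$ and $D_{M}=D_{M'}$ yields, after dividing by the appropriate power of $2$, a parity constraint of the shape $\delta(\delta+2k)\equiv0\pmod 4$ on the discrepancy $\delta$ between the two candidate scalars, which forces $\delta$ even and hence $\lambda\equiv\lambda'\pmod{2^{r}}$. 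This $2$-adic bookkeeping, together with the analogous but easier odd-$p$ computation, is the one genuinely technical point; everything else is the companion-matrix reduction above and the routine Chinese remainder theorem accounting.
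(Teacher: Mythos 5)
Your argument is correct. Note first that the paper does not actually prove this lemma: it is stated as a reformulation of [Lemma~2 and Thm.~2] of \cite{BRW} and closed with \qed, so the only basis for comparison is the normal-form machinery from \cite{BRW} that the paper recalls later (the forms $N(M)$ with the mgcd in the $(2,1)$ slot, in the proof of Theorem~\ref{thm:local-rev}) and the scalar-plus-cyclic decomposition $M=d\one+p^{\ell}C$ over $\ZZ/p^r\ZZ$ quoted from \cite{AOPV}. Your proof is in exactly that spirit --- CRT reduction to prime powers, splitting off the scalar part $\lambda\one$, and a cyclic-vector conjugation of the non-scalar remainder to a companion matrix --- so it is a legitimate self-contained reconstruction of the cited result rather than a new route. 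The easy direction (scalarity mod $m$ as a conjugacy invariant, recovering the mgcd from its divisor set) is clean and complete. For the converse, the one step you leave compressed, namely that ``admissible choices of $\lambda$ differ in a way absorbed by an explicit conjugation,'' is precisely where the content sits, and your final paragraph does supply the needed ingredient: the exact integer identities for trace and determinant force $p^{t}\mid(\lambda-\lambda')$ (for odd $p$ directly from the trace; for $p=2$ via the constraint $\delta(\delta+2^{t}\varepsilon)\equiv 0 \pmod{2^{2t}}$, which rules out $v_{2}(\delta)=t-1$), after which the discrepancy $\lambda-\lambda'=p^{t}\mu$ is pushed into the cyclic block, whose trace and determinant then agree with those of the other block \emph{exactly}, so both reduce to the same companion matrix. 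It is worth emphasising in a final write-up that the exact integer equalities of $T$ and $D$ (not merely their residues mod $p^{r}$) are indispensable here --- as your own example of the scalars $\lambda$ and $\lambda+2^{r-1}$ mod $2^{r}$ shows --- since this is what makes the ``if and only if'' in the lemma exact.
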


Returning to matrix powers, formula~\eqref{eq:matrix-power} simplifies
to
\begin{equation} \label{eq:iter2}
   M^{m} \, = \, u^{}_{m} M - D u^{}_{m-1} \one \ts , 
\end{equation}
where now $u^{}_{0} = 0$, $u^{}_{1} = 1$ and $u^{}_{m+1} = T u^{}_{m}
- D u^{}_{m-1}$ for $m\in\NN$; see \cite[Sec.~2.3]{BRW} for details.
Let $n\in\NN$ and assume $\gcd (n,D) = 1$.  This allows us to introduce
\[
   \per (n) \, := \, \mbox{period of } (u_{m})_{m\ge 0} \bmod{n}
\]
which is well-defined, as the sequence mod $n$ is then indeed periodic
without `pretail'. Recall that $(u_{m})_{m\ge 0}\bmod n$ must be
periodic from a certain index on, as a result of Dirichlet's pigeon
hole principle. Since $D$ is a unit in $\ZZ/n \ZZ$, the recursion
\eqref{eq:iter2} can be reversed, and $(u_{m})_{m\ge 0}\bmod n$ must
thus be periodic, with $\per (n)$ being the smallest positive integer
$k$ such that $u^{}_{k} = 0$ and $u^{}_{k+1}=1$ mod $n$.

One can now relate $\per (n)$ and $\ord (M,n)$ as follows, which 
provides an efficient way to calculate $\ord (M,n)$.

\begin{prop} \label{prop:ord-versus-per}
  Let\/ $M\in\Mat (2,\ZZ)$ be fixed and let\/ $(u_{m})_{m\ge 0}$ be the
  corresponding recursive sequence from~\eqref{eq:rec-def}. If $n \ge
  2$ is an integer with\/ $\gcd (n,D)=1$, $\ord (M,n)$ divides $\per
  (n)$. Moreover, with\/ $N_{n} := n/\gcd \bigl( n, \mgcd (M) \bigr)$,
  one has
\[
    \ord (M,n) \, = \, \per (N_{n})
\]
  whenever $N_{n} > 1$. In particular, this gives
  $\ord (M,n) = \per (n)$ whenever $n$ and $\mgcd (M)$ are
  coprime.

  In the remaining case, $N_{n}=1$, the matrix satisfies\/ $M \equiv
  \alpha \one\, \bmod{n}$ with $\alpha\in (\ZZ/n\ZZ)^{\times}$, so
  that\/ $\ord (M,n)$ is the order of\/ $\alpha$ modulo $n$.
\end{prop}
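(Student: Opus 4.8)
The plan is to read the matrix identity $M^{m}\equiv\one$ off entrywise from the power formula \eqref{eq:iter2}, $M^{m}=u_{m}M-D\ts u_{m-1}\one$. Writing $M=\left(\begin{smallmatrix} a & b\\ c & d\end{smallmatrix}\right)$ and comparing entries, $M^{m}\equiv\one \bmod n$ holds precisely when $u_{m}b\equiv u_{m}c\equiv 0$, $u_{m}(d-a)\equiv 0$ and $u_{m}a-D\ts u_{m-1}\equiv 1$, all modulo $n$ (the two diagonal entries yield the last two conditions, their difference being $u_{m}(d-a)$). For the first assertion I would take $m=\per(n)$: then $u_{m}\equiv 0$ and $u_{m+1}\equiv 1 \bmod n$, the recursion forces $D\ts u_{m-1}\equiv -1$, all four conditions hold, and hence $M^{\per(n)}\equiv\one\bmod n$. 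Since $\gcd(n,D)=1$ makes $M$ invertible modulo $n$, its order divides any exponent returning the identity, so $\ord(M,n)\mid\per(n)$.

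For the refined statement, the first step is to collapse the three off-diagonal and anti-trace conditions: they hold simultaneously if and only if $n$ divides $\gcd\bigl(u_{m}b,\ts u_{m}c,\ts u_{m}(d-a)\bigr)=\lvert u_{m}\rvert\,\mgcd(M)$, which by the definition of $N_{n}$ is equivalent to $u_{m}\equiv 0 \bmod N_{n}$. Granting this, \eqref{eq:iter2} collapses to $M^{m}\equiv w_{m}\one\bmod n$ with the scalar $w_{m}:=u_{m}a-D\ts u_{m-1}=u_{m+1}-d\ts u_{m}$. Reducing modulo $N_{n}\mid n$ and using $u_{m}\equiv 0$, the requirement $M^{m}\equiv\one$ forces $u_{m+1}\equiv 1 \bmod N_{n}$ as well; as $\gcd(N_{n},D)=1$ makes $(u_{m})\bmod N_{n}$ purely periodic, this says exactly $\per(N_{n})\mid m$. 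Taking $m=\ord(M,n)$ then gives $\per(N_{n})\mid\ord(M,n)$.

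The reverse divisibility is the crux, and I expect it to be the main obstacle: one must show $M^{\per(N_{n})}\equiv\one\bmod n$, that is, that the scalar $w_{m}$ at $m=\per(N_{n})$ equals $1$ modulo the full modulus $n$ and not merely modulo $N_{n}$. Taking determinants in $M^{m}\equiv w_{m}\one$ gives $w_{m}^{2}\equiv D^{m}\bmod n$, so $w_{m}$ is a square root of $D^{m}$ that already reduces to $1$ modulo $N_{n}$; the remaining work is to exclude the other roots and to control the lift of the congruence from $N_{n}$ up to $n$ across the factor $\gcd(n,\mgcd(M))$. I would attack this via the Chinese remainder theorem, reducing to $n=p^{r}$, and then exploit the explicit splitting $M=a\one+\mgcd(M)\,M_{1}$ together with the plateau analysis of Proposition~\ref{prop:powers-modp}, rather than the abstract periodicity of $(u_{m})$ alone.

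Finally, in the degenerate case $N_{n}=1$ one has $n\mid\mgcd(M)$, hence $b\equiv c\equiv 0$ and $a\equiv d \bmod n$, so that $M\equiv a\one\bmod n$. Here $\det(M)\equiv a^{2}$ is coprime to $n$, whence $a=\alpha$ is a unit, and $\ord(M,n)$ is manifestly the multiplicative order of $\alpha$ in $(\ZZ/n\ZZ)^{\times}$.
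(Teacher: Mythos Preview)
Your treatment of the forward direction $\per(N_n)\mid\ord(M,n)$ is correct and coincides with the paper's argument: both proofs read off the entrywise conditions from \eqref{eq:iter2}, observe that the three ``off-diagonal'' congruences are jointly equivalent to $N_n\mid u_m$ (via the $\lcm$ identity $\lcm\bigl(\tfrac{n}{\gcd(n,b)},\tfrac{n}{\gcd(n,c)},\tfrac{n}{\gcd(n,d-a)}\bigr)=N_n$), and then use the recursion to pass from the diagonal condition to $u_{m+1}\equiv 1\bmod N_n$. Your proof of $\ord(M,n)\mid\per(n)$ and of the degenerate case $N_n=1$ also matches the paper.

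Where you diverge is precisely at the converse. You flag the implication $\bigl(u_m\equiv 0,\;u_{m+1}\equiv 1\bmod N_n\bigr)\Rightarrow M^m\equiv\one\bmod n$ as the main obstacle and outline a route via CRT, the decomposition $M=a\one+\mgcd(M)\,M_1$, and the plateau analysis. The paper, by contrast, simply \emph{asserts} the equivalence (``Consequently, $M^m\equiv\one\bmod n$ is equivalent to \ldots'') after having established only the forward implication; no separate justification of the reverse direction is given. So your caution is well placed: the step you single out is exactly the one the paper passes over. Your proposed route is reasonable --- the binomial expansion of $(a\one+\mgcd(M)M_1)^m$ does yield $w_m\equiv a^m\bmod \mgcd(M)^2$, which is stronger than the obvious congruence and is the kind of refinement needed to lift $w_m\equiv 1$ from $N_n$ to $n$ --- but you would still need to carry it through to a complete argument.
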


\begin{proof}
If $M=\left(\begin{smallmatrix} a & b \\ c & d \end{smallmatrix}
\right)$, the iteration formula \eqref{eq:iter2} implies
that $M^{m} \equiv \one \bmod{n}$ if and only if
\[
    u_{m} a - D u_{m-1} \equiv 1 \, , \quad
    u_{m} b \equiv 0 \, , \quad u_{m} c \equiv 0 \, , 
    \quad \mbox{and} \quad
    u_{m} d - D u_{m-1} \equiv 1 \quad \bmod{n}\ts ,
\]
so that also $u_{m} (a-d) \equiv 0\, \bmod{n}$. Consequently, $n$
divides $u_{m} b$, $u_{m} c$ and $u_{m} (a-d)$. This implies that
$u_{m}$ is divisible by $\frac{n}{\gcd(n,b)}$, $\frac{n}{\gcd(n,c)}$
and $\frac{n}{\gcd(n,a-d)}$, hence also by the least common multiple
of these three numbers, which is the integer
\[
   N_{n} \, = \, \frac{n}{\gcd \bigl(n,\gcd(b,c,a-d)\bigr)}
   \, = \, \frac{n}{\gcd \bigl(n,\mgcd(M)\bigr)}.
\]
Since $N_{n} | n$, we now also have $u_{m} a - D u_{m-1} \equiv 1
\bmod{N_{n}}$. When $u_{m} \equiv 0 \bmod{N_{n}}$, the recursion now gives
$u_{m+1} \equiv T u_{m} - D u_{m-1} \equiv - D u_{m-1} \equiv 1 - u_{n} a
\equiv 1 \bmod{N_{n}}$. Consequently, $M^{m} \equiv\one \bmod{n}$ is
equivalent to $u_{m} \equiv 0$ and $u_{m+1}\equiv 1$ mod $N_{n}$. So, for
$N_{n} > 1$, one has
\[
   \ord (M,n) \, = \, \per (N_{n}) \ts ,
\]
which is the period of the sequence $(u_{m})_{m\ge 0}$ modulo $N_{n}$.
Since $\per (N_{n})$ clearly divides $\per (n)$, one finds
$\ord (M,n)\ts  | \per (n)$.

Finally, when $N_{n} =1$, one has $n \ts | \mgcd (M)$, which implies
$M\equiv \alpha \one \bmod{n}$, where we have $\alpha^{2} \in
(\ZZ/n\ZZ)^{\times}$ due to $\gcd (n,D)=1$. Since this also implies
$\alpha \in (\ZZ/n\ZZ)^{\times}$, the last claim is clear.
\end{proof}

\begin{remark}
  Instead of the characteristic polynomial $P_M$, any other monic
  polynomial that annihilates $M$ can be employed to derive a
  recursive sequence whose period is a multiple of the matrix order
  modulo $n$.  For $n=p$\, a prime, the unique minimal polynomial
  $Q_M$ of $M$ suggests itself to be chosen.  For $d=2$, $Q_M$ has
  smaller degree than $P_M$ precisely when $M = \alpha \one$, whence
  $\mgcd (M) = 0$ and $Q_M (z) = z-\alpha$. Consequently, $\ord (M,
  p)$ is then always equal to the order of $\alpha$ modulo $p$.
  \exend
\end{remark}

\section{Orbit pretail structure of toral endomorphisms}
\label{sec:orbit}

In this section, we look at the action of $M\in \mathrm{Mat}(d,\ZZ)$
on a lattice $L_{n}$, with special emphasis on the structure of general
endomorphisms. When $M$ is not invertible, this manifests itself in
the existence of non-trivial `pretails' to periodic orbits, with rather
characteristic properties. More precisely, given a periodic point $y$
of $M$, a finite set of iterates (or suborbit)
\begin{equation} \label{eq:pretail}
   O \, = \, \{ x, Mx, M^2 x, \ldots , M^n x = y \}
\end{equation} 
is called a \emph{pretail} (of $y$) if $y$ is the only periodic point
of $M$ in $O$.

\subsection{General structure}
Let $M$ and $n$ be fixed, and define $R = \ZZ/n \ZZ$.  Let $\Mper(M)$
denote the set of periodic points on the lattice $\tilde{L}_{n}$,
under the action of $M$ mod $n$. Due to the linear structure of $M$,
$\Mper(M)$ is an $M$-invariant submodule of $\tilde{L}_{n}$. It is the
maximal submodule on which the restriction of $M$ acts as an
invertible map.  The kernel $\ker (M^k)\subset \tilde{L}_{n} $ denotes
the set of points that are mapped to $0$ under $M^k$. One has
$\ker(M^k) \subset \ker (M^{k+1})$ for all $k\ge 0$, and this chain
stabilises, so that $\bigcup_{k\ge 0} \ker(M^{k})$ is another
well-defined and $M$-invariant submodule of $\tilde{L}_{n}$. This is
then the maximal submodule on which the restriction of $M$ acts as a
nilpotent map. Note that $\Mper (M) \cap \ker (M^{k}) = \{ 0 \}$ for
all $k\ge 0$.

Consider an arbitrary $x\in\tilde{L}_{n}$ and its iteration under $M$.
Since $\lvert \tilde{L}_{n}\rvert = n^{d}$ is finite, Dirichlet's
pigeon hole principle implies that this orbit must return to one of
its points. Consequently, every orbit is a cycle or turns into one
after finitely many steps, i.e.\ it is eventually periodic. By
elementary arguments, one then finds the following result.

\begin{fact} \label{f:upperBds}
   There are minimal integers $m\geq 0$ and $k \geq 1$ such that
   $M^{k +m }\equiv M^{m } \bmod n$.  The number $k$ is the
   least common multiple of all cycle lengths on $\tilde{L}_{n}$, 
   while $m$ is the maximum of all pretail lengths.  Clearly,
   $\Mper (M) =\fix(M^{k})$.  \qed
\end{fact}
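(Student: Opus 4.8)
The plan is to establish the three claims of Fact~\ref{f:upperBds} in sequence, starting from the eventual periodicity of every orbit established in the preceding paragraph. First I would fix the existence of minimal $m\geq 0$ and $k\geq 1$ with $M^{k+m}\equiv M^{m}\bmod n$. Since $\Mat(d,\ZZ/n\ZZ)$ is a finite monoid (of order $n^{d^2}$), the sequence of matrix powers $(M^{j})_{j\geq 0}$ must eventually repeat, so there exist $a<b$ with $M^{a}\equiv M^{b}\bmod n$; writing $b=a+\ell$ gives $M^{a+\ell}\equiv M^{a}$. The set of pairs $(m,k)$ for which $M^{k+m}\equiv M^{m}$ is nonempty and closed downward in a suitable sense, so one takes $m$ minimal first and then $k$ minimal for that $m$; a short argument shows this minimal $k$ is in fact independent of which valid $m$ one starts from, because once $M^{m+k}\equiv M^{m}$ holds it propagates to all larger exponents.

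Next I would identify $k$ as the least common multiple of all cycle lengths on $\tilde{L}_{n}$. The forward implication is that if $x$ lies on a cycle of length $c$, then $M^{c}x=x$, and applying $M^{m}$ shows the relation $M^{m+k}\equiv M^{m}$ forces $c\mid k$ for every such $c$ (restricting to periodic points, where $M$ acts invertibly, lets one cancel the $M^{m}$); hence $\operatorname{lcm}$ of all cycle lengths divides $k$. Conversely, let $k'$ be that least common multiple. For any periodic point $y$ one has $M^{k'}y=y$, and for any $x\in\tilde{L}_{n}$ the point $M^{m}x$ is periodic (with $m$ the maximal pretail length), so $M^{k'+m}x=M^{m}x$ for all $x$, giving $M^{k'+m}\equiv M^{m}$ and therefore $k\mid k'$ by minimality of $k$. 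The two divisibilities yield $k=k'$.

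For the identification of $m$ as the maximum of all pretail lengths, I would argue that $M^{m+k}\equiv M^{m}$ says precisely that $M^{m}$ maps $\tilde{L}_{n}$ into the periodic set $\Mper(M)=\fix(M^{k})$: indeed $M^{k}(M^{m}x)=M^{m+k}x=M^{m}x$, so every $M^{m}x$ is fixed by $M^{k}$ and hence periodic. Thus after $m$ steps every orbit has entered its cycle, meaning no pretail can have length exceeding $m$; and minimality of $m$ guarantees that $M^{m-1}$ does \emph{not} map everything into $\Mper(M)$, so some point still requires the $m$-th step to reach periodicity, i.e.\ some maximal pretail has length exactly $m$. The final sentence $\Mper(M)=\fix(M^{k})$ then follows immediately: $\fix(M^{k})$ consists of genuinely periodic points (their period divides $k$), and conversely every periodic point has period dividing $k$ by the lcm characterisation, hence lies in $\fix(M^{k})$.

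The main obstacle I anticipate is the bookkeeping around cancellation when relating $k$ to cycle lengths, since $M$ is only invertible on $\Mper(M)$, not on all of $\tilde{L}_{n}$. The clean way to handle this is to consistently restrict the relevant relations to the invariant submodule $\Mper(M)$, where $M$ acts as an invertible map and the usual group-order arguments for the order of a permutation apply directly; outside this submodule one uses only the nilpotent behaviour to bound the pretail length $m$, keeping the two roles of $m$ and $k$ cleanly separated. The rest is routine verification that the minimal pair $(m,k)$ produced by the monotonicity arguments matches these two extremal quantities.
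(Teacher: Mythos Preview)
Your proposal is correct and supplies exactly the ``elementary arguments'' that the paper alludes to but omits (the statement carries a \qed with no proof). One small point of presentation: in your converse step for $k=\operatorname{lcm}$, you invoke ``$M^{m}x$ is periodic (with $m$ the maximal pretail length)'' before having established that identification; it is cleaner to note directly that $M^{m+k}\equiv M^{m}$ already gives $M^{k}(M^{m}x)=M^{m}x$, so $M^{m}x\in\fix(M^{k})\subset\Mper(M)$, avoiding any forward reference.
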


The lattice $\tilde{L}_{n}=R^d$ is a free $R$-module.  The
modules $\Mper(M)$ as well as $\fix(M^j)$ and $\ker (M^j)$ for
$j\geq 1$ are submodules of it, with $\fix(M^i)\cap\ker(M^j) = \{0\}$
for all $i\ge 1$ and $j\ge 0$. Recalling some results
on modules from \cite[Ch.~III]{Lang} now leads to the following
consequences.
\begin{fact}\label{f:directSum}
   Let $m$ and $k$ be the integers from Fact $\ref{f:upperBds}$.
   If $m\ge 1$, one has 
\[
   \{0\} \subsetneq \ker(M) \subsetneq \ker(M^2) \subsetneq \ldots
   \subsetneq \ker(M^{m}) \subseteq \tilde{L}_{n},
\]
   while $\ker(M^{m+j})= \ker(M^{m})$ for all $j\geq 0$. 
   Moreover, one has
   \[
      \tilde{L}_{n} = \fix(M^{k})\oplus\ker(M^{m}),
   \]
   which is the direct sum of two $M$-invariant submodules.
  Hence, $\Mper (M)$ and $\ker (M^m)$ are finite
  projective $R$-modules.   \qed
\end{fact}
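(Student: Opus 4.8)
The plan is to regard $\tilde{L}_{n}=R^{d}$ as a module of finite length over the finite ring $R=\ZZ/n\ZZ$ and to produce the decomposition as a concrete instance of Fitting's lemma, using the additivity of length in place of the rank--nullity count that one would have over a field. The point to keep in mind is that $R$ is not a field (it has zero divisors unless $n$ is prime), so a naive dimension argument is unavailable; the conceptual substitute is that $R^{d}$ is finite, hence both Artinian and Noetherian and therefore of finite length, and that $\len$ is additive on short exact sequences of finite $R$-modules. I expect this passage to be the only genuine obstacle, with everything else amounting to bookkeeping.

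First I would record the elementary stabilisation lemma for the ascending kernel chain. From $M\ker(M^{j})\subseteq\ker(M^{j-1})$ one checks that a single coincidence $\ker(M^{i})=\ker(M^{i+1})$ forces $\ker(M^{i+1})=\ker(M^{i+2})$, so an equality propagates upward. Since $R^{d}$ is finite the chain must stabilise; let $m'$ be the least index with $\ker(M^{m'})=\ker(M^{m'+1})$. By propagation $\ker(M^{m'+j})=\ker(M^{m'})$ for all $j\geq 0$, and by minimality of $m'$ every inclusion below it is strict. Because $\len\ker(M^{j})+\len\,\mathrm{im}(M^{j})=\len R^{d}$ is constant (by the exact sequence below), the descending image chain $\mathrm{im}(M^{j})$ stabilises at exactly the same index $m'$.

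Next I would establish the Fitting decomposition $R^{d}=\mathrm{im}(M^{m'})\oplus\ker(M^{m'})$. Writing $P:=\mathrm{im}(M^{m'})$ and $N:=\ker(M^{m'})$, the intersection $P\cap N$ is trivial by the standard argument: if $x=M^{m'}y$ with $M^{m'}x=0$, then $y\in\ker(M^{2m'})=\ker(M^{m'})$, whence $x=M^{m'}y=0$. Equality $P+N=R^{d}$ then follows from additivity of length applied to the exact sequence $0\to\ker(M^{m'})\to R^{d}\xrightarrow{M^{m'}}\mathrm{im}(M^{m'})\to 0$, giving $\len P+\len N=\len R^{d}$; combined with $P\cap N=\{0\}$ this forces $\len(P+N)=\len R^{d}$, so the inclusion $P+N\subseteq R^{d}$ is an equality and the sum is direct. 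Both summands are $M$-invariant, with $M$ restricting to an automorphism of $P$ and to a nilpotent map on $N$.

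Finally I would match this with the quantities of Fact~\ref{f:upperBds}. Since $M$ is an automorphism on $P$ and nilpotent on $N$, every point becomes periodic after at most $m'$ steps, while a point of nilpotency index $m'$ in $N$ realises a pretail of length exactly $m'$ into the fixed point $0$; hence the maximal pretail length $m$ equals $m'$, which yields the asserted strictly increasing chain up to $\ker(M^{m})$ together with $\ker(M^{m+j})=\ker(M^{m})$. Likewise $P=\Mper(M)=\fix(M^{k})$: the order of the automorphism $M|_{P}$ is the least common multiple $k$ of the cycle lengths, so $P\subseteq\fix(M^{k})$, while $\fix(M^{k})\cap N=\{0\}$ (a nilpotent map has no nonzero fixed point) forces the reverse inclusion through the decomposition. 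The projectivity statement is then immediate, since $\Mper(M)$ and $\ker(M^{m})$ are direct summands of the free module $R^{d}$, hence finite projective $R$-modules.
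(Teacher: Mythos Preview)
Your argument is correct: this is precisely Fitting's lemma for the finite $R$-module $R^{d}$, carried out via additivity of length (or, equivalently and even more simply, via cardinalities of finite abelian groups). The identification $m'=m$ and $P=\Mper(M)=\fix(M^{k})$ is handled cleanly, and the projectivity of the summands follows at once from their being direct summands of the free module $R^{d}$.

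By way of comparison, the paper does not actually prove this Fact: it is stated with a \qed\ and justified only by the sentence ``recalling some results on modules from \cite[Ch.~III]{Lang}''. Your write-up therefore supplies what the paper leaves implicit. The only stylistic remark is that, since everything is finite, you could replace the length bookkeeping by the first isomorphism theorem for finite abelian groups (giving $\lvert P\rvert\cdot\lvert N\rvert=\lvert R^{d}\rvert$ directly) and the inclusion--exclusion identity $\lvert P+N\rvert\cdot\lvert P\cap N\rvert=\lvert P\rvert\cdot\lvert N\rvert$; this avoids invoking Artinian/Noetherian language altogether, but it is the same argument.
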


In general, the projective summands need not be free.
As a simple example, let us consider $\tilde{L}^{}_{6}$ with $d=1$ and
$M=2$. Here, $\Mper (M) = \{ 0,2,4 \}$ covers the fixed
point $0$ and a $2$-cycle, while $\ker (M) = \{ 0,3\}$. Both are
modules (and also principal ideals, hence generated by a single
element) over $\ZZ/6\ts \ZZ$, but do not have a basis, hence are not
free. Nevertheless, one has $\ZZ/6\ts \ZZ = \Mper (M) \oplus \ker
(M)$.  We will return to this question below.

\subsection{The pretail tree}
   
Consider the equation $M^\ell x = y$, with $\ell \in \NN$, for some
arbitrary, but fixed $y$ in $\tilde{L}_{n}$.  In general, this
equation need not have any solution $x\in\tilde{L}_{n}$.  On the other
hand, when there is a solution $x\in\tilde{L}_{n}$, the set of all
solutions is precisely $x + \ker(M^\ell)$, which has cardinality
$\lvert \ker(M^\ell) \rvert$.  If $y$ is a periodic point, the first
case can never occur, as there is then at least one predecessor of
$y$. Due to the linearity of $M$, the structure of the set of pretails
of a periodic point $y$ must be the same for all $y \in\Mper(M)$ (note
that there is precisely one predecessor of $y$ in the periodic orbit,
which might be $y$ itself, while all points of the pretail except $y$
are from the complement of the periodic orbit). 

Consequently, we can study the pretail structure for $y=0$. Let us
thus combine all pretails of the fixed point $0$ into a (directed)
graph, called the \emph{pretail graph} from now on; see \cite{WBook}
for general background on graph theory. A single pretail is called
\emph{maximal} when it is not contained in any longer one.  By
construction, there can be no cycle in the pretail graph, while $y=0$
plays a special role.  Viewing each maximal pretail of $0$ as an
`ancestral line', we see that this approach defines a rooted tree with
root $0$. Note that an isomorphic tree also `sits' at every periodic
point $y$.

\begin{figure}
\centerline{\includegraphics[width=0.7\textwidth]{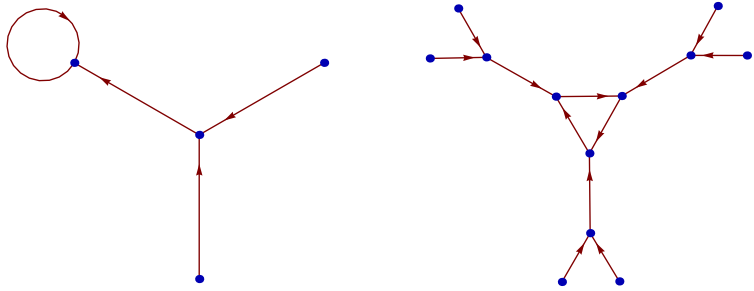}}
\caption[]{The directed graph for the action of
  $M=\left(\begin{smallmatrix}4&0 \\ 1&4\end{smallmatrix}\right)$ on
    the lattice $\tilde{L}^{}_{6}$. The matrix has three fixed points
    and two $3$-cycles (shown once each), while each periodic point
    has the same binary tree of height $2$ as pretail tree.}
  \label{fig:mixed}
\end{figure}

\begin{coro} \label{coro:p-tree}
   Every periodic point of $M$ on $\tilde{L}_{n}$ has a directed
   pretail graph that is isomorphic to that of the fixed point\/ $0$.
   Up to graph isomorphism, it thus suffices to analyse the latter.
   By reversing the direction, it is a rooted tree with root\/ $0$.
   This tree is trivial if and only if $M$ is invertible on
   $\tilde{L}_{n}$.  \qed
\end{coro}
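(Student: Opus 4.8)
The plan is to establish the three assertions of the corollary in turn, leaning on the module-theoretic decomposition already assembled in Facts \ref{f:upperBds} and \ref{f:directSum} and on the linearity of $M$.

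\medskip

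First, for the isomorphism of pretail graphs at distinct periodic points, I would exploit linearity directly. Given any periodic point $y\in\Mper(M)$, the map $x\mapsto x+y'$ — where $y'$ is chosen so that $My' = y - (\text{the unique periodic predecessor of }0\text{'s image})$, or more cleanly, translation by a suitable periodic point — sends pretails of $0$ to pretails of $y$. The key observation is that $M^\ell x = 0$ has solution set $x_0 + \ker(M^\ell)$ while $M^\ell x = y$ has solution set $x_1 + \ker(M^\ell)$ for any particular preimage $x_1$, so the fibres over $0$ and over $y$ are cosets of the same subgroup $\ker(M^\ell)$ and hence have equal cardinality. I would argue that the assignment respecting the $M$-action (a graph homomorphism sending an edge $u\to Mu$ to an edge $u+c\to M(u+c)$ for the appropriate constant $c$) is a bijection on vertices that preserves and reflects adjacency, giving a directed graph isomorphism. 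The only subtlety is bookkeeping about the single periodic predecessor of $y$ inside its own cycle, which must be matched with the periodic predecessor of $0$; this is exactly the parenthetical remark already made in the text, and I would simply invoke it.

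\medskip

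Second, that reversing all edge directions yields a rooted tree with root $0$: since the pretail graph has no cycles (already noted, as $0$ is the only periodic point in any pretail of $0$) and every non-root vertex has exactly one outgoing edge under $M$ (namely $x\mapsto Mx$), reversing directions gives a graph in which every vertex other than $0$ has a unique parent, i.e.\ a rooted tree. I would state this as a direct consequence of the definition of \emph{pretail} together with the acyclicity already recorded before the corollary.

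\medskip

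The main point, and the one I expect to require the most care, is the final equivalence: the tree is trivial if and only if $M$ is invertible on $\tilde{L}_n$. For the direction that invertibility forces triviality, if $M$ is invertible on $\tilde{L}_n$ then $\ker(M) = \{0\}$, so the integer $m$ of Fact \ref{f:upperBds} is $0$ and every point is periodic; hence no point other than $0$ maps into $0$ as a non-periodic predecessor, and the pretail tree consists of the single vertex $0$. Conversely, if $M$ is \emph{not} invertible on $\tilde{L}_n$, then $\det(M)$ is a zero-divisor mod $n$, so $\ker(M)\neq\{0\}$; picking $0\neq v\in\ker(M)$ gives a point with $Mv = 0$, and $v\notin\Mper(M)$ since $\Mper(M)\cap\ker(M)=\{0\}$ by Fact \ref{f:upperBds}. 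Thus $v$ is a genuine non-periodic predecessor of $0$, so the tree is non-trivial. The delicate step is verifying $\ker(M)\neq\{0\}$ exactly when $M$ is non-invertible over the ring $R=\ZZ/n\ZZ$ — which is not a field — so I would justify this via the determinant being a unit in $R$ iff $M$ acts bijectively on the free module $R^d$, rather than appealing to any field-specific argument.
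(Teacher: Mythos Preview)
Your proposal is correct and follows essentially the same route as the paper, which treats the corollary as immediate (note the \qed) from the linearity argument given in the paragraphs just preceding it: fibres of $M^\ell$ over any periodic point are cosets of $\ker(M^\ell)$, and the acyclicity of the pretail graph is by construction. Your write-up supplies more detail than the paper does, particularly for the triviality equivalence; one minor citation slip is that the identity $\Mper(M)\cap\ker(M^k)=\{0\}$ you invoke is stated in the text just before Fact~\ref{f:upperBds}, not within it.
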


Two illustrative examples are shown in Figures~\ref{fig:mixed} and
\ref{fig:triple}.  Each $M$ defines a unique (rooted) pretail tree on
a given lattice.  If $v_i$ denotes the number of nodes (or vertices)
of this tree with graph distance $i$ from the root, we have
$v^{}_{0}=1$ and
\begin{equation} \label{eq:tree1}
  \big\lvert \ker (M^j) \big\rvert
  \, = \, v^{}_{0} + v^{}_{1} + \cdots + v^{}_{j}
\end{equation}
for all $j\ge 0$, where $v_i = 0$ for all $i$ larger than the
maximal pretail length. Also, one has
\begin{equation} \label{eq:tree2}
   v_{j} \, = \,
   \big\lvert \ker(M^{j}) \setminus
   \ker(M^{j-1}) \big\rvert \, = \,
   \big\lvert \ker(M^{j}) \big\rvert
   - \big\lvert \ker(M^{j-1}) \big\rvert
\end{equation}
for $j\ge 1$, where the second equality follows from the submodule
property. Recall that terminal nodes of a rooted tree (excluding the
root in the trivial tree) are called \emph{leaves}. With this
definition, the total number of leaves on $\tilde{L}_n$ is $\lvert
\tilde{L}_n \setminus M \tilde{L}_n \rvert$. For $i\in \NN$, define
$w_{i}$ to be the number of nodes with graph distance $i$ from the
root that fail to be leaves, and complete this with $w^{}_{0} = 0$ for
the trivial tree and $w^{}_{0}=1$ otherwise. It is clear that this
leads to
\begin{equation} \label{eq:tree3}
   v^{}_{i+1} \, = \, w^{}_{0} 
     \bigl(w^{}_{i} \, \big\lvert
   \ker (M) \big\rvert - \delta^{}_{i,0}\bigr) ,
\end{equation}
via the number of solutions to $Mx=0$ and the special role of the
root, and inductively to
\begin{equation} \label{eq:tree4}
   \big\lvert \ker(M^{i+1}) \big\rvert \, = \,
   (w^{}_{0} + w^{}_{1} + \cdots + w^{}_{i} )
   \big\lvert \ker (M) \big\rvert  
\end{equation}
whenever $w^{}_{0} = 1$, with both relations being valid for all 
$i\ge 0$.

\begin{figure}
\centerline{\includegraphics[width=0.9\textwidth]{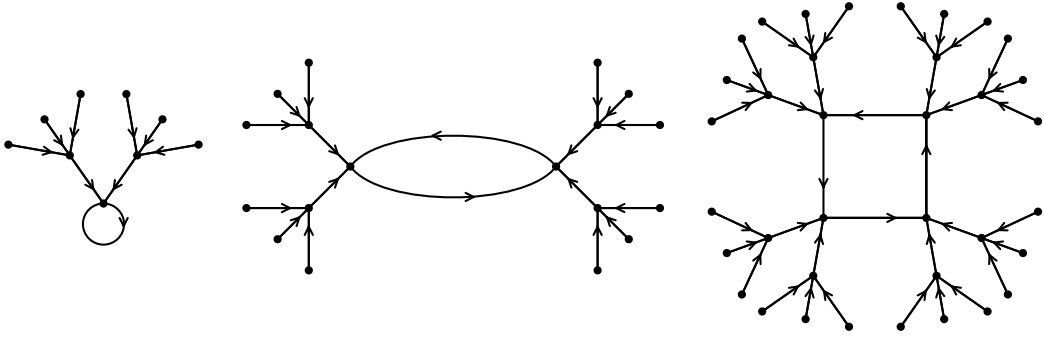}}
\caption[]{The directed graph for the action of
  $M=\left(\begin{smallmatrix}0&12 \\ 1&6\end{smallmatrix}\right)$ on
  the lattice $\tilde{L}^{}_{15}$. The only fixed point of $M$ is $0$,
  while it has two $2$-cycles and five $4$-cycles (shown once each
  only). All pretail trees have the same height.}
  \label{fig:triple}
\end{figure}

\begin{lemma} \label{lem:samelength}
   If\/ $M$ acts on $\tilde{L}_{n}$, its uniquely defined pretail
   tree of the fixed point\/ $0$ has height $m\ge 0$, and the
   following properties are equivalent.
\begin{itemize}
\item[\textrm{(i)}] All maximal pretails have the same length $m$;
\item[\textrm{(ii)}] One has $v_i = w_i\ne 0$ for all\/ $0 \le i < m$ 
     and $w_{i}=0$ for $i\ge m$;
\item[\textrm{(iii)}] One has $\lvert \ker(M^{i+1})\rvert =
     \lvert \ker (M) \rvert \, \lvert \ker (M^{i})
     \rvert = \lvert \ker (M) \rvert^{i+1}$ for all\/
     $0\le i < m$, together with $\lvert \ker (M^{m+j})\rvert
     = \lvert \ker(M^{m}) \rvert$ for all $j\ge 0$.
\end{itemize}
In particular, $m$ is the integer from Fact~$\ref{f:upperBds}$.
\end{lemma}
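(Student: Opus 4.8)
The plan is to dispose of the degenerate case $m=0$ first, then prove the two equivalences (i)$\Leftrightarrow$(ii) and (ii)$\Leftrightarrow$(iii) separately for $m\ge 1$, after which the identification of $m$ with the integer of Fact~\ref{f:upperBds} is immediate. When $m=0$, the tree is trivial, so $M$ is invertible on $\tilde{L}_{n}$ by Corollary~\ref{coro:p-tree}; here $w_0=0$ and $\ker(M^j)=\{0\}$ for all $j$, and each of the three conditions reduces to a vacuous or trivially true statement (the nonvacuous parts being $w_i=0$ and $\lvert\ker(M^{j})\rvert=1$ for all $j\ge 0$), so they hold simultaneously. For the remainder I assume $m\ge 1$, so that the tree is nontrivial, $w_0=1$, and $K:=\lvert\ker(M)\rvert\ge 2$.

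For (i)$\Leftrightarrow$(ii) I would use the graph-theoretic reading of the two level-counts: every node at distance $i$ from the root is either a leaf or a non-leaf, so $v_i=w_i+\ell_i$ with $\ell_i$ the number of leaves at level $i$, whence $v_i=w_i$ exactly when level $i$ carries no leaf. A maximal pretail is an ancestral line terminating at a leaf, and its length is the distance of that leaf from the root; thus (i) states that every leaf sits at level $m$, i.e.\ the tree has uniform depth $m$. If the depth is uniform, then no level below $m$ carries a leaf ($v_i=w_i$ for $0\le i<m$), each such level is nonempty because the tree reaches height $m$ ($v_i\ne 0$), and all nodes at level $m$ are leaves with none beyond ($w_i=0$ for $i\ge m$), which is (ii). Conversely, (ii) forbids leaves below level $m$ and non-leaves at level $\ge m$; together with the height being $m$ this forces every leaf to level $m$, where I must check in particular that no node exists beyond level $m$, since its parent would be a non-leaf at level $\ge m$, contradicting $w_m=0$.

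For (ii)$\Leftrightarrow$(iii) the engine is the recursion \eqref{eq:tree3}, which for $w_0=1$ reads $v_{i+1}=w_iK-\delta_{i,0}$, together with \eqref{eq:tree2} and the summed form \eqref{eq:tree4}. Assuming (ii) I obtain $v_1=K-1$ and $v_{i+1}=Kw_i=Kv_i$ for $1\le i<m$, hence $v_i=(K-1)K^{i-1}$ for $1\le i\le m$, and by \eqref{eq:tree1} (a geometric sum) $\lvert\ker(M^i)\rvert=K^i$ for $0\le i\le m$, which is the first assertion of (iii); moreover $w_i=0$ for $i\ge m$ gives $v_j=0$ for $j>m$ and thus the stabilisation $\lvert\ker(M^{m+j})\rvert=\lvert\ker(M^m)\rvert$. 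Conversely, from (iii) I read off via \eqref{eq:tree2} that $v_{i+1}=\lvert\ker(M^{i+1})\rvert-\lvert\ker(M^i)\rvert=K^i(K-1)$ for $0\le i<m$; solving \eqref{eq:tree3} for $w_i$ then yields $w_0=1$ and $w_i=v_{i+1}/K=(K-1)K^{i-1}=v_i\ne 0$ for $1\le i<m$, while the stabilisation forces $v_j=0$ for $j>m$ and hence $w_i=0$ for $i\ge m$, which is (ii).

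Finally, since the height of the pretail tree of $0$ is by definition the maximal distance from a leaf to the root, it equals the maximal pretail length of $0$; by Corollary~\ref{coro:p-tree} every periodic point carries an isomorphic tree, so this is the maximum over all of $\tilde{L}_{n}$, i.e.\ the integer $m$ of Fact~\ref{f:upperBds}. The only delicate point in the whole argument is the consistent treatment of the root, which has one predecessor fewer than an interior node because $0$ is its own periodic predecessor; this is precisely the source of the $w_0$-prefactor and the $-\delta_{i,0}$ correction in \eqref{eq:tree3}, and getting the base case $v_1=K-1$ right is what makes the induction for $\lvert\ker(M^i)\rvert=K^i$ close cleanly.
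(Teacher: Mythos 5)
Your proposal is correct and follows essentially the same route as the paper: dispose of the trivial case via Corollary~\ref{coro:p-tree}, read (i)$\Leftrightarrow$(ii) off the leaf/non-leaf decomposition of each level, and derive (ii)$\Leftrightarrow$(iii) from the counting relations \eqref{eq:tree1}--\eqref{eq:tree4}. The only (harmless) difference is stylistic: you compute the explicit closed forms $v_i=(K-1)K^{i-1}$ and sum the geometric series, whereas the paper runs the induction directly on the relation $\lvert\ker(M^{i+1})\rvert=\lvert\ker(M^{i})\rvert+w_i\ts\lvert\ker(M)\rvert$.
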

\begin{proof}
By Corollary~\ref{coro:p-tree}, the pretail tree is trivial (hence
$w^{}_{0}=m=0$) if and only if $M$ is invertible on
$\tilde{L}_{n}$. Since all claims are clear for this case, let us now
assume that $M$ is not invertible on $\tilde{L}_{n}$.

All maximal pretails have the same length if and only if all leaves of
the pretail tree of $0$ have the same graph distance from the root
$0$. Clearly, the latter must be the height $m$ of the tree. When $M$
is not invertible on $\tilde{L}_{n}$, the tree is not the trivial one,
so $m\ge 1$.  The equivalence of (i) and (ii) is then clear, since
both conditions characterise the fact that all leaves have distance
$m$ from the root.

The implication $\mathrm{(ii)}\Rightarrow\mathrm{(iii)}$ can be seen
as follows. The first claim is trivial for $i=0$, as $\ker (M^0) =
\ker (\one) = \{ 0 \}$.  Assuming (ii), Eqns.~\eqref{eq:tree2} --
\eqref{eq:tree4} yield
\[
   \big\lvert \ker (M^{i+1}) \big\rvert \, = \,
   \big\lvert \ker (M^{i}) \big\rvert + 
   w_{i} \big\lvert \ker (M) \big\rvert \, = \,
   \big\lvert\ker (M^i )\big\rvert + \bigl( \lvert \ker{M^i} \rvert - 
   \lvert \ker{M^{i-1}}\rvert \bigr) \big\lvert\ker{M}\big\rvert 
\]
for $1 \le i < m$, which (inductively) reduces to the first condition
of (iii), while the second is clear from the meaning of $m$.

Conversely, the second condition of (iii) means $w_{m+j}=0$
for all $j\ge 0$, while the first condition, together with
Eqns.~\eqref{eq:tree3} and \eqref{eq:tree4}, successively gives
$v_{i} = w_{i}$ for all $0\le i < m$.
\end{proof}

On the lattice $\tilde{L}_{p^r}$, when $\lvert \ker (M)\rvert = p$,
one can say more.

\begin{prop}\label{prop:pre-length}
  Consider the action of $M$ on the lattice $\tilde{L}_{p^r}$. When\/
  $\lvert \ker (M)\rvert = p$, one has $\lvert \ker (M^{i})\rvert =
  p^{\min(i,m)}$ for all\/ $i\ge 0$, where $m$ is the integer from
  Fact~$\ref{f:upperBds}$ for\/ $n=p^r$. This means $v_{i} = p^{i-1}
  (p-1)$ for $1\le i \le m$, and all maximal pretails share the same
  length $m$.
\end{prop}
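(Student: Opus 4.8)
The plan is to leverage Lemma~\ref{lem:samelength} by verifying its condition~(iii) under the hypothesis $\lvert \ker (M)\rvert = p$. The essential observation is that on $\tilde{L}_{p^r}$ the kernels $\ker(M^i)$ form a strictly increasing chain of submodules (until they stabilise at height $m$), and each quotient $\ker(M^{i+1})/\ker(M^{i})$ embeds naturally into $\ker(M)$, so its size is bounded by $\lvert \ker(M)\rvert = p$. First I would establish this embedding: the map $x \mapsto M^{i}x$ sends $\ker(M^{i+1})$ into $\ker(M)$ and has kernel exactly $\ker(M^{i})$, inducing an injective homomorphism $\ker(M^{i+1})/\ker(M^{i}) \hookrightarrow \ker(M)$. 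Hence $\lvert \ker(M^{i+1})\rvert / \lvert \ker(M^{i})\rvert$ divides $p$, so it equals either $1$ or $p$.

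Next I would use Fact~\ref{f:directSum}, which tells us the chain is \emph{strictly} increasing for $0 \le i < m$ and stabilises for $i \ge m$. Strict inclusion for $i<m$ forces the ratio $\lvert\ker(M^{i+1})\rvert/\lvert\ker(M^{i})\rvert$ to be greater than $1$, hence exactly $p$; stabilisation forces it to be $1$ for $i \ge m$. A clean induction starting from $\lvert\ker(M^0)\rvert = 1$ then yields $\lvert \ker(M^{i})\rvert = p^{\min(i,m)}$ for all $i\ge 0$. This in turn is precisely condition~(iii) of Lemma~\ref{lem:samelength}: the first part $\lvert\ker(M^{i+1})\rvert = \lvert\ker(M)\rvert\,\lvert\ker(M^{i})\rvert = \lvert\ker(M)\rvert^{i+1} = p^{i+1}$ holds for $0\le i<m$, and the second part $\lvert\ker(M^{m+j})\rvert = \lvert\ker(M^m)\rvert = p^m$ holds for $j\ge 0$. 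The equivalence in Lemma~\ref{lem:samelength} then delivers directly that all maximal pretails share the common length $m$.

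Finally, the formula for the level sizes $v_i$ follows from \eqref{eq:tree2}: for $1\le i\le m$ one computes $v_i = \lvert\ker(M^{i})\rvert - \lvert\ker(M^{i-1})\rvert = p^{i} - p^{i-1} = p^{i-1}(p-1)$.

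\medskip

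The main obstacle I anticipate is being careful about the module-theoretic step, namely that the quotients $\ker(M^{i+1})/\ker(M^{i})$ genuinely inject into $\ker(M)$ as \emph{modules} (not merely as sets), so that Lagrange's theorem in the relevant abelian-group sense lets us conclude the index divides $p$. Since $R = \ZZ/p^r\ZZ$ is not a field, one cannot simply talk about dimensions of vector spaces; instead the argument must rest on counting and on the divisibility of group orders. The key point that makes everything work is that $\lvert\ker(M)\rvert = p$ is prime, so the embedded subgroup is either trivial or all of $\ker(M)$, leaving no intermediate possibilities — this is exactly what pins the ratio to be $1$ or $p$ with nothing in between. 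Once that dichotomy is combined with the strict-versus-stabilised behaviour from Fact~\ref{f:directSum}, the rest is a routine induction.
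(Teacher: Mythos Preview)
Your proposal is correct and takes a genuinely different route from the paper's own proof. The paper argues via the combinatorial tree quantities $v_i$ and $w_i$: it uses that $\tilde{L}_{p^r}$ is a $p$-group, so $\lvert\ker(M^{i+1})\rvert$ must be some power $p^{i+j}$ with $j\ge 1$, and then bounds $j$ from above by combining Eq.~\eqref{eq:tree3} (which gives $v_{i+1}=w_i\,\lvert\ker(M)\rvert$) with the trivial inequality $w_i\le v_i$ to force $j=1$. Your argument bypasses the tree combinatorics entirely: the map $x\mapsto M^i x$ induces an injection $\ker(M^{i+1})/\ker(M^i)\hookrightarrow\ker(M)$, so the index divides $p$ and is hence $1$ or $p$; Fact~\ref{f:directSum} then decides which. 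This is cleaner and slightly more general --- it uses only that $\lvert\ker(M)\rvert$ is prime, not the ambient $p$-group structure of $\tilde{L}_{p^r}$ --- whereas the paper's route has the virtue of staying within the graph-theoretic language already set up in Eqs.~\eqref{eq:tree1}--\eqref{eq:tree4}. Both approaches finish identically by invoking Lemma~\ref{lem:samelength} and reading off $v_i$ from Eq.~\eqref{eq:tree2}.
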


\begin{proof}
By assumption, $M$ is not invertible, and the last claim is obvious
from Lemma~\ref{lem:samelength} in conjunction with
Eq.~\eqref{eq:tree1}. We thus need to prove the formula for the
cardinality of $\ker (M^{i})$ for arbitrary $i\ge 0$.

Since $0$ is a fixed point of $M$, we clearly have $v^{}_{0} = 1$ and
$v^{}_{1}=p-1$, together with the inequality $0 \le w^{}_{1} \le
p-1$. If $w^{}_{1}=0$, we have $m=1$ and we are done. Otherwise,
$\ker (M) \subsetneq \ker (M^{2})$, hence $\lvert \ker (M^2) \rvert =
p^{j}$ for some $j\ge 2$, as the kernel is a subgroup of our lattice
(which has cardinality $p^{rd}$). This forces $w^{}_{1} = p-1$ and
$j=2$.  More generally, when $\lvert \ker (M^{i}) \rvert = p^{i}$ for
some $1\le i < m$, one cannot have $w_{i}=0$, so that $\lvert \ker
(M^{i+1})\rvert = p^{i+j}$ for some $j\ge 1$.  Since now $0\le w_i
\le p^{i-1} (p-1)$, the only possibility is $j=1$ together with
$w_i = p^{i-1} (p-1)$. This argument can be repeated inductively
until $i=m$ is reached, with $\lvert \ker (M^{m + j})\rvert =
\lvert \ker (M^{m})\rvert$ for all $j\ge 0$.
\end{proof}

\begin{figure}
  \centerline{\includegraphics[width=\textwidth]{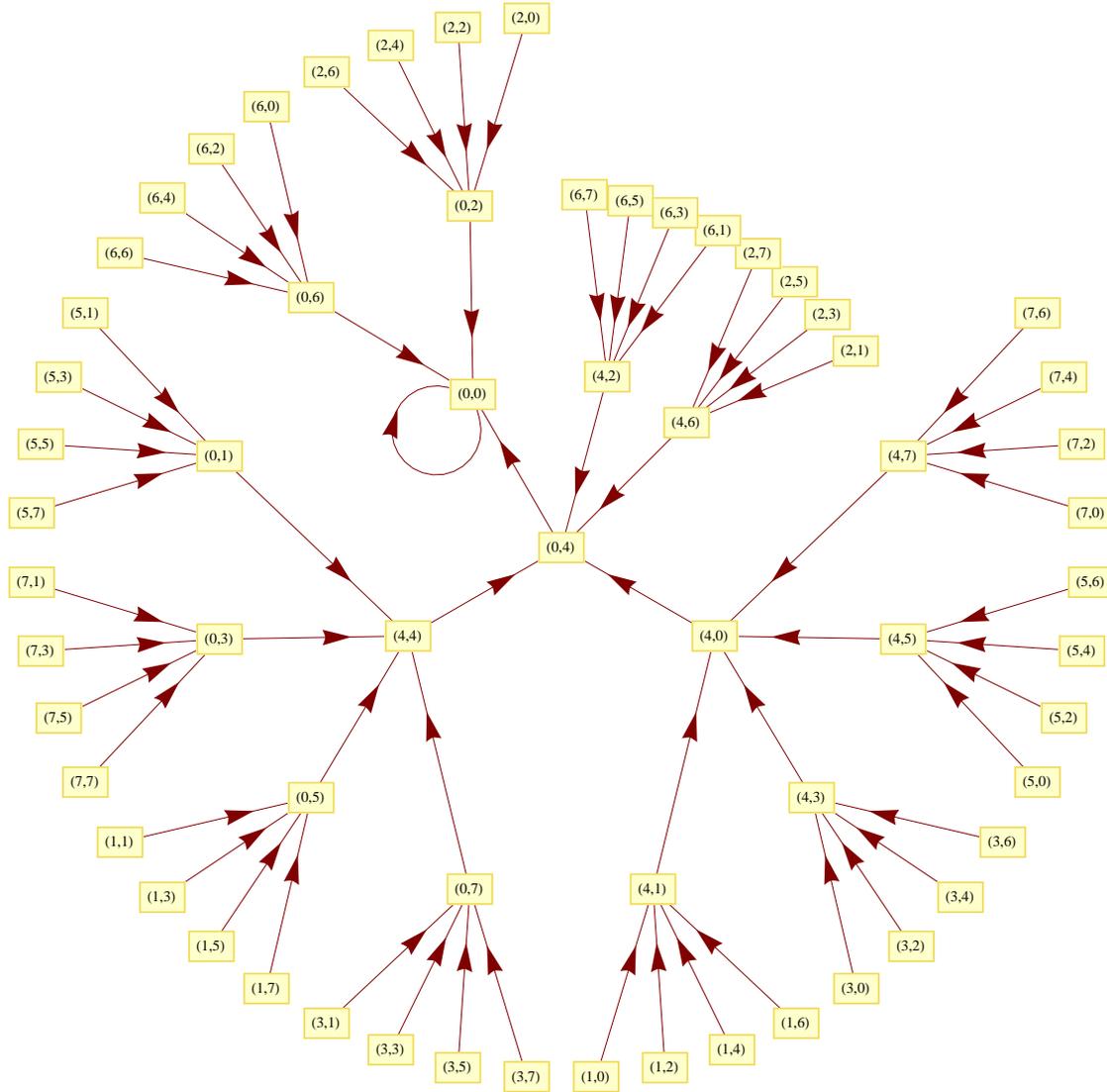}}
  \caption{The pretail graph for Example~\ref{ex:big-tree}, with
  coordinates for the action of the matrix $M$ on $\tilde{L}^{}_{8}$,
  where it is nilpotent with nil-degree $4$.}
\label{fig:big-tree}
\end{figure}

In general, the maximal pretails need not share the same length,
which means that we still have to extend our point of view.

\begin{example} \label{ex:big-tree}
Consider the matrix $M=\left(\begin{smallmatrix} 4 & 4 \\ 1 & 4
\end{smallmatrix}\right)$ on $\tilde{L}^{}_{8}$, where it is nilpotent
(mod $8$) with nil-degree $4$. Since $\card (\ker (M))=4$,
Proposition~\ref{prop:pre-length} does not apply.  The (directed)
pretail graph spans the entire lattice and is shown in
Figure~\ref{fig:big-tree}, together with the loop at $0$ that marks
this point as the root of the tree (which emerges from the figure by
removing this loop and reversing all arrows). \exend
\end{example}

So far, we have looked at a single lattice $\tilde{L}_{n}$. However,
any given matrix $M$ immediately defines a \emph{sequence} of trees
via $\tilde{L}_{n}$ with $n\in\NN$. When $d=2$, the result of
\cite[Thm.~2]{BRW} implies the following result.

\begin{coro}\label{coro:BRW}
  Let $M,M^{\ts\prime}\in\mathrm{Mat} (2,\ZZ)$ be two matrices
  with the same trace, determinant and $\mgcd$. Then, they have
  the same sequence of pretail trees on the lattices $\tilde{L}_{n}$.
  \qed
\end{coro}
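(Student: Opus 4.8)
The plan is to reduce everything to the conjugacy criterion of Lemma~\ref{lem:mgcd} and then to observe that a $\GL(2,\ZZ/n\ZZ)$-conjugacy induces an isomorphism of the entire directed orbit graph on $\tilde{L}_n$, which in particular matches the pretail trees rooted at the fixed point $0$.

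First I would apply Lemma~\ref{lem:mgcd}. Since $M$ and $M'$ agree in trace, determinant and $\mgcd$, their reductions modulo $n$ are $\GL(2,\ZZ/n\ZZ)$-conjugate for every $n\ge 2$ (the case $n=1$ being trivial). So I fix $n$ and choose $U\in\GL(2,\ZZ/n\ZZ)$ with $U M \equiv M' U \bmod n$.

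Next I would verify that $U$, regarded as a bijection of $\tilde{L}_n=(\ZZ/n\ZZ)^2$, is an isomorphism of the directed graphs induced by $M$ and $M'$. For any $x\in\tilde{L}_n$, the $M$-edge $x\to Mx$ is sent by $U$ to $Ux\to U(Mx)=(UM)x=(M'U)x=M'(Ux)$, which is precisely the $M'$-edge leaving $Ux$; as $U$ is bijective, the two orbit graphs are thereby identified. By linearity $U(0)=0$, so $U$ fixes the common root, carries periodic points to periodic points and pretails to pretails, and therefore restricts to an isomorphism of the rooted pretail tree of $M$ at $0$ onto that of $M'$ at $0$. By Corollary~\ref{coro:p-tree} this rooted tree is, up to isomorphism, the pretail tree attached to \emph{every} periodic point, so the pretail trees of $M$ and $M'$ on $\tilde{L}_n$ coincide. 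As this holds for each $n\in\NN$, the two matrices share the same sequence of pretail trees.

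I do not anticipate a serious obstacle: the argument is essentially the translation of modular conjugacy into a directed-graph isomorphism, and the only point needing (minimal) care is that the conjugator $U$ respects the root $0$, which is immediate from its linearity. The substantive input---namely that equality of trace, determinant and $\mgcd$ already forces $\GL(2,\ZZ/n\ZZ)$-conjugacy for all $n$---is supplied entirely by Lemma~\ref{lem:mgcd}.
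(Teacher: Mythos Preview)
Your argument is correct and is exactly the intended one: the paper does not spell out a proof but marks the corollary with \qed after noting that it follows from \cite[Thm.~2]{BRW} (restated here as Lemma~\ref{lem:mgcd}), and your graph-isomorphism observation is the natural way to read that implication.
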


\subsection{Decomposition on $\tilde{L}_{p^r}$}

When the integers $u,v$ are coprime, one has $L_{uv} \simeq L_u
\oplus L_v$, wherefore the action on $L_n$ with $n\in\NN$ is
completely determined by that on $L_{p^r}$, for all $p^r || n$. In
particular, the pretail orbit structure on an arbitrary $L_n$ can be
derived from that on the sublattices associated with the factors in
the prime factorisation of $n$. 

Define $R_r = \ZZ/p^r \ZZ$, which is a \emph{local} ring, with unique
maximal ideal $(p) = p \ts R_r$.  The latter contains all zero
divisors.  By \cite[Thm.~X.4.4]{Lang}, we then know that the two
projective modules $\Mper(M)$ and $\ker (M^{k})$ of
Fact~\ref{f:directSum} are \emph{free}, so each has a basis.
Consequently, one knows that the linear map on $\tilde{L}_{p^r}$
defined by $M$ induces unique linear maps on $\fix(M^{k(r)})$ and
$\ker(M^{m(r)})$, and $M$ is conjugate to the direct sum of these
maps, compare \cite[Prop.~4.3.28]{AW}. Each of the latter, in turn,
admits a matrix representation with respect to any chosen basis of the
corresponding submodule. Different choices of bases lead to conjugate
matrices, by an application of \cite[Prop.~4.3.23]{AW}.

\begin{coro}\label{c:blockdiagonal}
   On $\tilde{L}_{p^r}$, $M$ is similar to a block diagonal matrix $\left(
   \begin{smallmatrix} A & 0\\ 0 & B \end{smallmatrix}\right)$ over
   $R_r$, where $A$ is invertible and $B$ is nilpotent, the latter of
   nil-degree $n(B)$ say.  The block matrices $A$ and $B$ are unique up to
   similarity.  The direct sum from Fact~$\ref{f:directSum}$ now reads
\[
      \tilde{L}_{p^r} = \fix(M^{\ord(A,p^r)})\oplus\ker (M^{n(B)}),
\]
   where the concrete form of the exponents $k$ and $m$ of
   Fact~$\ref{f:directSum}$ follows from the block diagonal structure
   of\/ $M$ chosen.  Here, $\fix(M^{\ord(A,p^r)})\simeq
   R_r^{d^{\prime}}$ and $\ker (M^{n(B)})\simeq
   R_r^{d-d^{\ts\prime}}$, where one has $d^{\ts\prime} = \mathrm{rank}
   \left(\Mper (M)\right) \leq d$.

   Furthermore, $d^{\ts\prime}$ is independent of $r$. When comparing the
   above objects as modules over the ring $R_s$ for different $s$, one
   has
\[
   \begin{split}
      \mathrm{rank}^{}_{1}( \Mper^{}_{1}(M)) & = 
      \mathrm{rank}^{}_{r}(\Mper^{}_{r}(M)) 
      = d^{\ts\prime} \quad \text{and} \\
      \mathrm{rank}^{}_{1}( \ker^{}_{1}(M^{m(1)})) & =
      \mathrm{rank}^{}_{r}(\ker^{}_{r} (M^{m(r)}))
      = d-d^{\ts\prime} ,
   \end{split}
\]
where an index $s$ at\/ $\Mper$, $\ker$ or $\mathrm{rank}$ refers to
$R_{s}$ as the underlying ring.
\end{coro}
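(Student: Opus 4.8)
The plan is to rest everything on the canonical splitting $\tilde{L}_{p^r} = \Mper(M) \oplus \ker(M^m)$ of Fact~\ref{f:directSum} together with the freeness of the two summands over the local ring $R_r$, which the discussion preceding the corollary already supplies through \cite[Thm.~X.4.4]{Lang}. First I would fix a basis of $\Mper(M)$ and a basis of $\ker(M^m)$; their concatenation is a basis of $\tilde{L}_{p^r}$ in which $M$ is block diagonal, say $\mathrm{diag}(A,B)$, since both summands are $M$-invariant (this is the content of \cite[Prop.~4.3.28]{AW}). Here $A$ represents $M|_{\Mper(M)}$, which is invertible because $\Mper(M)$ is by definition the maximal submodule on which $M$ acts invertibly, and $B$ represents $M|_{\ker(M^m)}$, which is nilpotent. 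By Fact~\ref{f:directSum} the kernel chain stabilises precisely at step $m$, so $M^{m}$ annihilates $\ker(M^m)$ while $M^{m-1}$ does not; hence $n(B)=m$. Likewise, since every cycle lies in $\Mper(M)$ where $M$ acts as $A$, the smallest $j$ with $A^j=\one$ is simultaneously $\ord(A,p^r)$ and the least common multiple $k$ of all cycle lengths, whence $\fix(M^{\ord(A,p^r)})=\fix(M^k)=\Mper(M)$ and $\ker(M^{n(B)})=\ker(M^m)$. This identifies the exponents $k$ and $m$ of Fact~\ref{f:directSum} and rewrites the direct sum in the stated form.

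For uniqueness up to similarity, suppose $M$ is also conjugate to $\mathrm{diag}(A',B')$ with $A'$ invertible and $B'$ nilpotent. Pulling the coordinate splitting of the target back through the conjugating matrix yields a decomposition $\tilde{L}_{p^r}=V_{A'}\oplus V_{B'}$ into $M$-invariant submodules on which $M$ acts invertibly, respectively nilpotently. Invertibility forces $V_{A'}\subseteq\Mper(M)$ and nilpotency forces $V_{B'}\subseteq\ker(M^m)$; applying the projection of $\tilde{L}_{p^r}$ onto $\Mper(M)$ along $\ker(M^m)$, which is the identity on $V_{A'}$ and zero on $V_{B'}$ yet surjects onto $\Mper(M)$, gives $V_{A'}=\Mper(M)$, and symmetrically $V_{B'}=\ker(M^m)$. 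Thus $A'$ and $B'$ are matrix representations of $M|_{\Mper(M)}$ and $M|_{\ker(M^m)}$ for some bases, so \cite[Prop.~4.3.23]{AW} makes them similar to $A$ and $B$.

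The isomorphisms $\fix(M^{\ord(A,p^r)})\simeq R_r^{d'}$ and $\ker(M^{n(B)})\simeq R_r^{d-d'}$ are then immediate: freeness gives $\Mper(M)\simeq R_r^{d'}$ with $d'=\mathrm{rank}(\Mper(M))$, and since rank is additive over the direct sum $R_r^d=\Mper(M)\oplus\ker(M^m)$, the nilpotent summand has rank $d-d'$. The substantive part is the independence of $d'$ from $r$. Here I would use that $\Mper(M)=M^N\tilde{L}_{p^r}$ for all sufficiently large $N$, because $M^N$ fixes $\Mper(M)$ and kills $\ker(M^m)$, and identify the rank of a free $R_r$-module $F$ with $\dim_{\FF_p}(F/pF)$. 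Reduction modulo $p$ carries $\Mper_r(M)=M^N R_r^d$ onto $\Mper_1(M)=(M\bmod p)^N\FF_p^d$, which for large $N$ is the eventual image, i.e.\ the span of the generalised eigenspaces for the nonzero eigenvalues of $(M\bmod p)$. The one point needing proof is that this reduction realises the quotient $\Mper_r(M)/p\,\Mper_r(M)$, i.e.\ that $\Mper_r(M)\cap p\,R_r^d=p\,\Mper_r(M)$. This follows by writing any $x=py$ in the intersection through the complement as $y=y_1+y_2$ with $y_1\in\Mper_r(M)$ and $y_2\in\ker_r(M^m)$, and noting $py_2\in\Mper_r(M)\cap\ker_r(M^m)=\{0\}$, so $x=py_1\in p\,\Mper_r(M)$. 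Consequently $\mathrm{rank}_r(\Mper_r(M))=\dim_{\FF_p}\Mper_1(M)$, a number independent of $r$; the nilpotent ranks then equal $d-d'$ over every $R_s$ by complementation.

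I expect the rank-stability claim of the last paragraph to be the main obstacle, and within it the identity $\Mper_r(M)\cap p\,R_r^d=p\,\Mper_r(M)$ that lets reduction modulo $p$ compute the $R_r$-rank uniformly in $r$. The block form, the identification of the exponents, and the uniqueness are comparatively routine once the freeness and direct-sum facts from the preceding discussion are in hand.
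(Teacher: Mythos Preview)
Your proof is correct. The block form, the identification of $k=\ord(A,p^r)$ and $m=n(B)$, and your uniqueness argument follow the same path as the paper (which dispatches these by citing \cite[Props.~4.3.28 and 4.3.23]{AW} and \cite[Cor.~III.4.3]{Lang}), though you give more detail on why any other invertible--nilpotent block splitting must come from the \emph{same} submodule decomposition.

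The genuine difference is in the rank-stability step. You argue module-theoretically: realise $\Mper_r(M)$ as the eventual image $M^N R_r^d$, compute its rank as $\dim_{\FF_p}\bigl(\Mper_r(M)/p\,\Mper_r(M)\bigr)$, and verify the purity condition $\Mper_r(M)\cap p\,R_r^d=p\,\Mper_r(M)$ via the complementary summand, so that reduction mod $p$ identifies this quotient with $\Mper_1(M)$. The paper instead works with the blocks directly: lift $A$ and $B$ to integer matrices and note that $\det(A)$ being a unit in $R_r$ forces $\gcd(\det(A),p)=1$, while $B^s\equiv 0\bmod p^r$ implies $B^s\equiv 0\bmod p$; since the conjugating matrix also reduces to an element of $\GL(d,\FF_p)$, the reductions $A\bmod p$ and $B\bmod p$ furnish the block decomposition over $\tilde{L}_p$ with the same block sizes $d'$ and $d-d'$. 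The paper's route is shorter and avoids the purity computation altogether; your route does not require lifting the blocks to $\ZZ$ and makes the isomorphism $\Mper_r(M)/p\,\Mper_r(M)\simeq\Mper_1(M)$ explicit, which is precisely the content of the corollary that follows in the paper.
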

\begin{proof}
The diagonal block-matrix structure is clear from
\cite[Props.~4.3.28 and 4.3.23]{AW}, while the isomorphism
claim follows from \cite[Cor.~III.4.3]{Lang}. 

For the last claim, observe that $A$ and $B$ can be viewed as integer
matrices acting on $R_{r}^{d^{\ts\prime}}$ and
$R_{r}^{d-d^{\ts\prime}}$, respectively. Here, $B^s = 0 \bmod p^r$ for
some $s\in\NN$ and $\gcd(\det(A),p)=1$, because $A$ is invertible mod
$p^r$ and $\det(A)$ must be a unit in $R_r$. But this means that the
reduction of $A \bmod p$ is also invertible over $R^{}_{1}=\ZZ/p\ZZ$,
while the reduction of $B \bmod p$ is still nilpotent. Consequently,
these reductions provide the blocks for the direct sum over
$\tilde{L}_{p}$, and the claim is obvious.
\end{proof}

Since two free modules of the same rank are isomorphic
\cite[Cor.~III.4.3]{Lang}, we also have the following consequence.
\begin{coro}
   One has the following isomorphisms of $R_1$-modules $($as $\FF_p$-vector
   spaces\/$)$,
   \[
    \Mper_{r}(M)/ p\, \Mper_{r} (M) \simeq \Mper_{1}(M)\  \ \text{and}\ \
    \ker_{r}(M^{m(r)})/ p \ker_{r} (M^{m(r)}) \simeq
    \ker^{(d)}_{1}(M^{m(1)}).
   \]
    This implies
   \[
   \big\lvert\Mper_{r}(M)\big\rvert = p^{r d^{\prime}} 
     = \big\lvert\Mper_{1}(M)\big\rvert^r
   \quad \text{and}\quad 
   \big\lvert\ker_{r}(M^{m(r)})\big\rvert = p^{r (d-d^{\prime})} 
       = \big\lvert\ker_{1}(M^{m(1)})\big\rvert^r  
   \] 
for the cardinalities of the finite modules.\qed
\end{coro}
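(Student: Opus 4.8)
The plan is to deduce the statement directly from the freeness already established in Corollary~\ref{c:blockdiagonal}, using the elementary fact that reduction modulo the maximal ideal turns a free module into a vector space over the residue field of the same rank. Concretely, Corollary~\ref{c:blockdiagonal} supplies isomorphisms of free $R_r$-modules $\Mper_{r}(M) \simeq R_r^{d^{\prime}}$ and $\ker_{r}(M^{m(r)}) \simeq R_r^{d-d^{\prime}}$, and crucially guarantees that the rank $d^{\prime}$ does not depend on $r$. Everything of substance is therefore contained in that earlier result, and the present corollary is a formal consequence.

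First I would pass to the quotient modulo $p$. Since the passage from $N$ to $N/pN$ commutes with finite direct sums, and $R_r/pR_r = \FF_p$, applying it to the two displayed isomorphisms yields
\[
   \Mper_{r}(M)/p\,\Mper_{r}(M) \, \simeq \, (R_r/pR_r)^{d^{\prime}} \, = \, \FF_p^{d^{\prime}}
\]
and likewise $\ker_{r}(M^{m(r)})/p\ker_{r}(M^{m(r)}) \simeq \FF_p^{d-d^{\prime}}$. Next I would invoke Corollary~\ref{c:blockdiagonal} once more, now specialised to $r=1$, to obtain $\Mper_{1}(M) \simeq R_1^{d^{\prime}} = \FF_p^{d^{\prime}}$ and $\ker_{1}(M^{m(1)}) \simeq \FF_p^{d-d^{\prime}}$. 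Since two free modules of equal rank over $R_1 = \FF_p$ are isomorphic \cite[Cor.~III.4.3]{Lang}, comparing the dimensions on the two sides gives the two claimed $\FF_p$-vector space isomorphisms.

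The cardinality statements then drop out by counting elements of free modules over $R_r$, using $\lvert R_r \rvert = p^r$. Indeed,
\[
   \big\lvert \Mper_{r}(M) \big\rvert \, = \, \lvert R_r \rvert^{d^{\prime}}
   \, = \, p^{r d^{\prime}} \, = \, (p^{d^{\prime}})^{r}
   \, = \, \big\lvert \Mper_{1}(M) \big\rvert^{r},
\]
and in exactly the same way $\lvert \ker_{r}(M^{m(r)}) \rvert = p^{r(d-d^{\prime})} = \lvert \ker_{1}(M^{m(1)}) \rvert^{r}$.

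There is essentially no genuine obstacle to overcome here, since the arithmetic content — that the invertible and nilpotent summands are free over the local ring $R_r$ and that their ranks $d^{\prime}$ and $d-d^{\prime}$ are independent of $r$ — has already been carried out. The only point that deserves a moment's care is that the isomorphisms are asserted abstractly, as isomorphisms of $\FF_p$-vector spaces of matching dimension, rather than being realised by a canonical map. One could instead induce them from the coordinatewise reduction $\tilde{L}_{p^r} \to \tilde{L}_{p}$, which commutes with $M$ and hence maps $\Mper_{r}(M)$ into $\Mper_{1}(M)$; but the rank comparison above is shorter and already suffices for the cardinality count, so I would present that version.
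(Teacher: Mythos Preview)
Your proposal is correct and matches the paper's approach exactly: the paper also treats this as an immediate consequence of Corollary~\ref{c:blockdiagonal} together with the fact that two free modules of the same rank are isomorphic (citing the same reference \cite[Cor.~III.4.3]{Lang}), and accordingly gives no further proof beyond the \qed. Your write-up simply spells out the details that the paper leaves implicit.
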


At this point, it is reasonable to link the properties of $M$
on $\tilde{L}_{p^r}$ to its minimal polynomial over $\FF_{p}$.

\begin{lemma}\label{l:mipoDecomp}
   If $M$ is similar mod $p$ to the block diagonal matrix of
   Corollary~$\ref{c:blockdiagonal}$, its minimal polynomial 
   over $\FF_p$ is $\mu^{}_M (x) = x^s f(x)$, where $f$ is a monic
   polynomial of order $k$ over $\FF_p$ with $f(0)\ne 0$. When
   $M$ is invertible, one has $s=0$ and $k=\gcd \{ \ell\in\NN \mid
   M^{\ell}\equiv\one \bmod p\}$. When $M$ is nilpotent, $f=1$ and
   $s=\gcd\{ t\in\NN \mid M^t \equiv 0 \bmod p\}$. In all remaining
   cases, $s$ and\/ $k$ are the smallest positive integers such that
   $B^s \equiv 0$ and $A^k \equiv \one \bmod p$.
\end{lemma}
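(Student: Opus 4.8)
The plan is to push everything down to the reduction mod $p$ and to exploit that the minimal polynomial is a similarity invariant. By hypothesis and Corollary~\ref{c:blockdiagonal}, the matrix $M \bmod p$ is similar over $\FF_p$ to $\left(\begin{smallmatrix} A & 0 \\ 0 & B \end{smallmatrix}\right)$, with $A$ invertible and $B$ nilpotent, both read as matrices over $\FF_p$. Since similar matrices share their minimal polynomial, and the minimal polynomial of a block diagonal matrix is the least common multiple of those of its diagonal blocks, I would start from
\[
   \mu_M \, = \, \lcm (\mu_A, \mu_B) \ts ,
\]
where $\mu_A$ and $\mu_B$ denote the minimal polynomials of $A$ and $B$ over $\FF_p$.

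Next I would identify the two factors. As $B$ is nilpotent, its minimal polynomial is $\mu_B = x^s$, where $s$ is the nil-degree of $B$ over $\FF_p$, that is, the smallest positive integer with $B^s \equiv 0 \bmod p$. As $A$ is invertible, $0$ is not an eigenvalue of $A$, so $x \nmid \mu_A$, whence $f := \mu_A$ is monic with $f(0) \neq 0$. Because $x \nmid f$, the factors $x^s$ and $f$ are coprime, so their least common multiple is their product, and
\[
   \mu_M \, = \, x^{s} f \ts ,
\]
which is the asserted shape.

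It then remains to interpret $k := \ord (f)$ and $s$ via matrix conditions and to specialise to the three cases. The key fact I would invoke (e.g.\ from \cite{LN}) is that, for the invertible matrix $A$ over $\FF_p$, one has $A^k \equiv \one \bmod p$ if and only if $f = \mu_A$ divides $x^k - 1$; by the very definition of the order of a polynomial, the smallest such $k$ is exactly $\ord (f)$. Hence $\ord (f)$ equals the multiplicative order of $A$, namely the smallest $k\in\NN$ with $A^k \equiv \one \bmod p$, which settles the general case. The two degenerate cases drop out by inspection: if $M$ is invertible the nilpotent block is absent, so $s=0$, $f=\mu_M$, and since $A$ is then the reduction of $M$ itself, $k = \ord (M,p) = \gcd \{ \ell\in\NN \mid M^\ell \equiv \one \bmod p\}$; if $M$ is nilpotent the invertible block is absent, so $f = 1$ and $\mu_M = x^s$ with $s$ the nil-degree of $M$. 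The only point that needs care, and the sole content beyond bookkeeping, is the equivalence $A^k \equiv \one \Leftrightarrow f \mid (x^k - 1)$ that converts the polynomial order of $f$ into the multiplicative order of $A$; once that is in place, every clause of the statement follows at once.
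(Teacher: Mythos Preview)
Your argument is correct and essentially parallels the paper's: both hinge on the block decomposition $M\sim A\oplus B$ over $\FF_p$ and on the equivalence $A^{k}\equiv\one\Leftrightarrow \mu_A\mid (x^{k}-1)$, which identifies $\ord(f)$ with the multiplicative order of $A$. The only organisational difference is that you invoke the standard identity $\mu_{A\oplus B}=\lcm(\mu_A,\mu_B)$ up front, whereas the paper verifies the same thing by hand, showing $\mu_M\mid x^{s}(x^{k}-1)$ and then arguing minimality of $s$ and $k$ directly from the nilpotence of $B$ and the invertibility of $A$; the content is the same.
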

\begin{proof}
   Recall from \cite[Def.~3.3.2]{LN} that the order of a polynomial
   $f\in\FF_{p} [x]$ with $f(0)\ne 0$, denoted by $\ord(f,p)$, is the
   smallest positive integer $\ell$ such that $f(x) | (x^{\ell}\! -\!
   1)$. When $M$ is invertible and $k$ as claimed, the polynomial $x^k
   \! -\! 1$ annihilates $M$. Since $\mu^{}_{M} (0) \ne 0$ in our
   case, we have $\mu^{}_{M} = f$ with $f(x) | (x^k \!- \! 1)$, so
   that $\ord(f,p) | k$ by \cite[Lemma~3.3.6]{LN}. By construction,
   $k$ is also the minimal positive integer such that $x^k \! -\! 1$
   annihilates $M$, hence $k=\ord(f,p)$.

   When $M$ is nilpotent, the claim is obvious, because $0$ is then
   the only possible root of the minimal polynomial over $\FF_{p}$, as
   all other elements of the splitting field of $f$ are units.

   In all remaining cases, $M$ is similar to $A\oplus B$ with $A$
   invertible and $B$ nilpotent, by Corollary~\ref{c:blockdiagonal}.
   We thus know that $\mu^{}_{M} (x) | x^s (x^k \! - \! 1)$ with $s$
   and $k$ as claimed, since the latter annihilates both $A$ and $B$.
   Observe that $B^s (B^k \! - \! \one) \equiv 0 \bmod p$ means
   $B^{k+s}\equiv B^s \bmod p$. Since $B$ is nilpotent, its powers
   cannot return to a non-zero matrix, hence $B^s \equiv 0\mod p$.
   Similarly, $A^{s+k}\equiv A^s \bmod p$ is equivalent with $A^k
   \equiv \one \bmod p$, as $A$ is invertible. This shows that we must
   indeed have $\mu^{}_{M} (x) = x^s f(x)$ with $\ord(f,p)=k$.
\end{proof}

\subsection{Classification on $\tilde{L}_{p}$}

When we consider $n=p$, we can go one step further, because $\FF_{p}$
is a field and one can classify nilpotent matrices via their Jordan
normal form. This follows from the observation that $0$ is the only
possible eigenvalue. Recall that an \emph{elementary shift matrix} is
an upper triangular matrix with entries $1$ on the upper
super-diagonal and $0$ everywhere else (this includes the $0$-matrix
in one dimension). An elementary shift matrix is nilpotent, with
nil-degree equal to its dimension. The following result is now
a standard consequence of the Jordan normal form over fields
\cite{J,Lang}.
\begin{fact}
  The nilpotent matrices in\/ $\mathrm{Mat} (d,\FF_{p})$ are conjugate
  to block-diagonal matrices, where each block is an elementary
  shift matrix.  \qed 
\end{fact}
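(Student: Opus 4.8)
The plan is to reduce the statement to the structure theorem for finitely generated modules over a principal ideal domain, which is the algebraic content underlying the Jordan normal form over a field. Let $N \in \Mat(d,\FF_{p})$ be nilpotent, say with $N^{k} = 0$ for the smallest such $k$. First I would view the vector space $V = \FF_{p}^{\,d}$ as a module over the polynomial ring $\FF_{p}[x]$, where $x$ acts as $N$; concretely, $x \cdot v := Nv$ and more generally $g(x)\cdot v := g(N)v$ for $g \in \FF_{p}[x]$. Since $N^{k} = 0$, the module $V$ is annihilated by $x^{k}$, hence is a torsion $\FF_{p}[x]$-module of finite length.

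Because $\FF_{p}[x]$ is a PID, the structure theorem then gives a decomposition $V \simeq \bigoplus_{i} \FF_{p}[x]/(g_{i}(x))$ into cyclic modules with each $g_{i} \mid x^{k}$. The polynomial $x$ is prime in $\FF_{p}[x]$, so the only monic divisors of $x^{k}$ are the powers $x^{j}$; thus $g_{i}(x) = x^{n_{i}}$ for integers $n_{i} \ge 1$ with $\sum_{i} n_{i} = d$. On a single cyclic summand $\FF_{p}[x]/(x^{n})$, choosing the ordered $\FF_{p}$-basis $x^{n-1}, x^{n-2}, \ldots, x, 1$ (the reversed powers of the cyclic generator), the action of $x$ sends each basis vector to the next one and kills the last, which is precisely the elementary shift matrix of size $n$. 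Concatenating these bases over all summands produces a basis of $V$ in which $N$ is block diagonal with elementary shift blocks.

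Finally, the change-of-basis map from the standard basis of $\FF_{p}^{\,d}$ to the basis just assembled lies in $\GL(d,\FF_{p})$, and conjugating $N$ by it yields the asserted block-diagonal form. The one point requiring care is the bookkeeping of the basis ordering: the shift matrix as defined in the text carries its nonzero entries on the upper super-diagonal, so I must order each cyclic basis by \emph{descending} powers of the generator (as above) rather than ascending, otherwise one obtains the transpose, i.e.\ a lower shift. This is purely a matter of convention and is the only place where an orientation slip could occur; everything else is a direct transcription of the PID structure theorem, to which the paper already appeals via \cite{Lang}.
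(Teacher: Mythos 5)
Your argument is correct and is essentially the paper's own route: the paper simply invokes the Jordan normal form over a field (observing that $0$ is the only eigenvalue of a nilpotent matrix), and your decomposition of $V$ as a torsion $\FF_{p}[x]$-module into cyclic summands $\FF_{p}[x]/(x^{n_i})$ is the standard proof of exactly that normal form in the nilpotent case. The remark about ordering each cyclic basis by descending powers to get upper rather than lower shift blocks is a correct and worthwhile precaution, but does not change the substance.
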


Some of this structure survives also for general $n$. For instance,
the $0$-matrix in dimension $d\ge 1$ leads to the regular $(n^{d} -
1)$-star as its pretail tree on $\tilde{L}_{n}$.  When $d\ge 2$, the
$d$-dimensional elementary shift matrix, on $\tilde{L}_{n}$, results in a
semi-regular tree with $w^{}_{0} = 1$, $w^{}_{1} = n-1$ and
  $w_{i} = n$ for $2 \le i \le d-1$, while $w_{j}=0$ for all
  $j\ge d$. These trees have the property that all maximal pretails
share the same length, which is the nil-degree of the matrix.
One can now go through all possible block-diagonal combinations of
such elementary shift matrices. This is a combinatorial problem and
gives the possible pretail trees over $\FF_{p}$.

As already suggested by Proposition~\ref{prop:pre-length}, the
structure of $\ker (M)$ plays an important role for the structure of
the pretail tree.  Together with the linearity of $M$, it constrains
the class of trees that are isomorphic to the pretail tree of some
integer matrix.  A more detailed analysis is contained in \cite{Diss}.

\section{Symmetry and reversibility}\label{sec:revsym}

Reversibility is an important concept in dynamics, compare \cite{RQ}
and references therein for background, and \cite{Devog} for an early
study in continuous dynamics. Here, we focus on discrete dynamics,
as induced by toral auto- and endomorphisms.

A matrix $M$ is called \emph{reversible}, within a given or specified
matrix group $\cG$, if it is conjugate to its inverse within
$\cG$. Clearly, this is only of interest when $M^{2} \ne
\one$. To put this into perspective, one usually defines
\[
   \cS (M) = \{ G \in \cG \mid GMG^{-1} = M \}
   \quad \mbox{and} \quad
   \cR (M) = \{ G \in \cG \mid GMG^{-1} = M^{\pm 1} \}
\]
as the \emph{symmetry} and \emph{reversing symmetry} groups of $M$;
see \cite{BRgen} and references therein for background and
\cite{BRcat,BRtorus} for examples in our present context. In
particular, one always has $\cR (M) = \cS (M)$ when $M^2 = \one$
or when $M$ is not reversible, while $\cR (M)$ is an extension
of $\cS (M)$ of index $2$ otherwise.

Note that a nilpotent matrix $M$ (or a matrix with nilpotent summand,
as in Corollary~\ref{c:blockdiagonal}) cannot be reversible in this
sense. However, they can still possess interesting and revealing
symmetry groups, although it is more natural to look at the \emph{ring}
of matrices that commute with $M$ in this case.

\begin{example}\label{ex:tree-sym}
Reconsider the matrix $M=\left( \begin{smallmatrix} 4 & 4 \\ 1 & 4
\end{smallmatrix}\right)$ from Example~\ref{ex:big-tree}, and its action 
on $\tilde{L}^{}_{8}$. Clearly, $M$ commutes with every element of the
ring $\ZZ/8 \ts \ZZ\, [M]$, which contains $64$ elements. This follows
from the existence of a cyclic vector, but can also be checked by a
simple direct calculation. Consequently, the symmetry group (in our
above sense) is the intersection of this ring with $\GL (8, \ZZ/8 \ts
\ZZ)$, which results in
\[
   \cS (M) \, = \, \big\langle \left(
    \begin{smallmatrix} 1 & 4 \\ 1 & 1 \end{smallmatrix} \right) ,
    3 \!\cdot\!\! \mbox{\large $\one$}, 
    5 \! \cdot\!\! \mbox{\large $\one$} \big \rangle
    \, \simeq \, C_{8} \times C_{2} \times C_{2} \ts , 
\]
which is an Abelian group of order $32$.  The matrices in $\cS (M)$
have either determinant $1$ or $5$, with $\{ A \in \cS (M) \mid \det
(A) = 1 \} \simeq C_{4} \times C_{2} \times C_{2}$.

One can now study the action of $\cS (M)$ on the pretail graph of
Figure~\ref{fig:big-tree}, which actually explains all its symmetries.
\exend
\end{example}

In what follows, we derive certain general properties, where we
focus on the reversing symmetry group, with invertible matrices
$M$ in mind.

\subsection{Reversibility of $\SL (2,\ZZ)$-matrices mod $n$}

Recall the matrix mgcd from Eq.~\eqref{eq:mgcd-def}, which is
a conjugation invariant. It can be used to solve the reversibility
at hand as follows.
\begin{theorem} \label{thm:local-rev}
   Let\/ $M\in \SL (2,\ZZ)$ and $n\in\NN$ be arbitrary. Then, the
   reduction of\/ $M$ mod $n$ is conjugate to its inverse within the
   group\/ $\GL (2, \ZZ/n\ZZ)$. The action mod\/ $1$ of any $M\in \SL
   (2,\ZZ)$ on $L_n$ is thus reversible for all $n\in\NN$.

   Moreover, if\/ $M\in \SL (2,\ZZ)$ has\/ $\mgcd(M) = r \ne 0$,
   its reduction mod $n$, for every $n\in\NN$, possesses an involutory 
   reversor.
\end{theorem}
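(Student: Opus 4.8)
The plan is to treat the two assertions separately: the first follows at once from the conjugacy criterion of Lemma~\ref{lem:mgcd}, while the second requires an explicit construction of a reflection. For the first claim I write $M=\left(\begin{smallmatrix} a & b \\ c & d\end{smallmatrix}\right)$ with $ad-bc=1$, so that $M^{-1}=\left(\begin{smallmatrix} d & -b \\ -c & a\end{smallmatrix}\right)$, and observe that $M$ and $M^{-1}$ share all three conjugacy data: $\trace (M^{-1})=a+d=\trace (M)$, $\det (M^{-1})=1=\det (M)$, and $\mgcd (M^{-1})=\gcd(-b,-c,a-d)=\gcd(b,c,d-a)=\mgcd (M)$. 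The sufficiency direction of Lemma~\ref{lem:mgcd} then yields that the reductions of $M$ and $M^{-1}$ mod $n$ are $\GL (2,\ZZ/n\ZZ)$-conjugate for every $n\ge 2$ (the case $n=1$ being trivial), which is precisely reversibility of $M$ on $L_n$.

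For the second claim I would first reduce the existence of an involutory reversor to a pair of congruences. A non-central involution $R$ has $\trace (R)=0$ and $\det (R)=-1$, so I write $R=\left(\begin{smallmatrix} \alpha & \beta \\ \gamma & -\alpha\end{smallmatrix}\right)$ with $\alpha^2+\beta\gamma\equiv 1$, which already forces $R^2\equiv\one$. Using the Cayley--Hamilton identity $M^{-1}=\trace (M)\ts\one -M$ (valid since $\det M=1$), the reversing relation $RM\equiv M^{-1}R$ becomes the single identity $RM+MR\equiv \trace (M)\ts R$; a direct entrywise computation shows that, for a trace-zero $R$, the off-diagonal equations are automatic and the diagonal ones both collapse to $(a-d)\ts\alpha+c\ts\beta+b\ts\gamma\equiv 0\pmod n$. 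Thus everything reduces to solving this linear congruence together with $\alpha^2+\beta\gamma\equiv 1$ over $\ZZ/n\ZZ$, and any such $R$ is automatically orientation-reversing.

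To produce a solution I would pass to prime powers via the Chinese remainder theorem and work modulo each $p^{\ts k}$ separately, the cases being governed by which of the coefficients $a-d$, $c$, $b$ carries the smallest power of $p$; since $\mgcd (M)=r\neq 0$, one of them realises $v_p(r)$. If $b$ (respectively $c$) has minimal valuation, the explicit choice $(\alpha,\beta,\gamma)=(1,0,\gamma^{}_{0})$ with $b\ts\gamma^{}_{0}\equiv d-a$ (respectively $(1,\beta^{}_{0},0)$ with $c\ts\beta^{}_{0}\equiv d-a$) solves both congruences at once, the solvability of the linear part being guaranteed by minimality of the valuation and the quadratic part holding trivially because $\beta\gamma=0$. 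In the remaining case, where $a-d$ has strictly smallest valuation, solving the linear congruence forces $\alpha\equiv 0\pmod p$; here I would fix $\beta=1$, solve the linear equation for $\alpha$ as a function of $\gamma$, and read $\alpha^2+\gamma\equiv 1$ as a fixed-point equation $\gamma=F(\gamma)$ whose defining map is a $p$-adic contraction (its derivative is divisible by $p$ because $p\mid b,c$). Hensel's lemma then supplies a solution modulo every $p^{\ts k}$, and reassembling the local data gives $R\in\GL (2,\ZZ/n\ZZ)$ with $R^2\equiv\one$ conjugating $M$ into $M^{-1}$.

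I expect the case in which $a-d$ has minimal valuation to be the main obstacle, as it is the only one where no constant solution works and a genuine lifting argument is needed; one also has to check the factor-of-two subtleties at $p=2$ there, although keeping the whole computation in the halving-free form $(a-d)\ts\alpha+c\ts\beta+b\ts\gamma\equiv 0$ and $\alpha^2+\beta\gamma\equiv 1$ avoids any division by $2$ and keeps the contraction estimate for $F$ valid at $p=2$ as well.
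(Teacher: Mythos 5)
Your proof of the first assertion is exactly the paper's: check that $M^{-1}$ shares trace, determinant and $\mgcd$ with $M$ and invoke Lemma~\ref{lem:mgcd}. For the involutory reversor, however, you take a genuinely different route. The paper passes to the normal form $N(M)=\left(\begin{smallmatrix} a & bc/r \\ r & d\end{smallmatrix}\right)$ of \cite{BRW} via some $P_{n}\in\GL(2,\ZZ/n\ZZ)$, observes that $\bigl(N(M)\bigr)^{-1}=C\ts N(M^{-1})\ts C^{-1}$ with $C=\diag(1,-1)$ and that $N(M^{-1})=A\ts N(M)\ts A^{-1}$ with the unipotent $A=\left(\begin{smallmatrix}1 & (d-a)/r \\ 0 & 1\end{smallmatrix}\right)$, and then exhibits the reversor $P_{n}(CA)P_{n}^{-1}$ directly, the involutivity of $CA$ being immediate. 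You instead make an ansatz for a trace-zero, determinant $-1$ involution $R$, use Cayley--Hamilton to reduce $RMR^{-1}\equiv M^{-1}$ to the single linear congruence $(a-d)\alpha+c\beta+b\gamma\equiv 0$ coupled with $\alpha^{2}+\beta\gamma\equiv 1$ (your entrywise reduction is correct: the off-diagonal equations are automatic and both diagonal ones coincide), and solve locally after a Chinese-remainder reduction. This is valid and buys something the paper's argument does not: a complete parametrisation of the trace-zero involutory reversors mod $n$, and independence from the normal-form machinery of \cite[Props.~5 and 6]{BRW}. The price is the case where $v_{p}(a-d)$ is strictly minimal, which you correctly identify as the crux. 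There the write-up needs one precaution: the linear congruence determines $\alpha$ only modulo $p^{\ts k-v_{p}(a-d)}$, so you should define $\alpha(\gamma)=-(c+b\gamma)/(a-d)$ honestly in $\ZZ_{p}$ (the quotient is a $p$-adic integer because $v_{p}(c+b\gamma)>v_{p}(a-d)$), run the contraction $\gamma\mapsto 1-\alpha(\gamma)^{2}$ on $\ZZ_{p}$ --- its Lipschitz constant is at most $p^{-2}$ since both $\alpha(\gamma)$ and $\alpha'(\gamma)=-b/(a-d)$ lie in $p\ts\ZZ_{p}$, with no factor of $2$ needed, so $p=2$ causes no trouble --- and only reduce the resulting exact $p$-adic solution modulo $p^{\ts k}$ at the end. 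With that understanding your argument is complete; the paper's proof is shorter but leans entirely on results imported from \cite{BRW}.
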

\begin{proof}
  When $M\in\SL (2,\ZZ)$, also its inverse is in $\SL(2,\ZZ)$, and $M$
  and $M^{-1}$ share the same determinant and trace.  Moreover, they
  also have the same $\mgcd$, so that the first claim follows from
  \cite[Thm.~2]{BRW} (or from Lemma~\ref{lem:mgcd}). This immediately
  implies, for all $n\in\NN$, the reversibility of the action mod $n$
  of $M$ on the lattice $\tilde{L}_{n}$, so that the statement on the
  equivalent action of $M$ mod $1$ on $L_n$ is clear.

Now, let $M=\left(\begin{smallmatrix} a & b \\ c & d \end{smallmatrix}
\right)\in \SL (2,\ZZ)$, so that $M^{-1} = \left(\begin{smallmatrix} d
& -b \\ -c & a \end{smallmatrix}\right)$, and $M$ and $M^{-1}$ share
the same determinant ($1$), trace ($a+d$) and mgcd ($r$). Assume $r\ne
0$, let $n\ge 2$ be fixed and consider the matrices mod $n$. Recall
the normal forms
\[
   N (M) = \begin{pmatrix} a & \frac{bc}{r} \\ r & d
       \end{pmatrix}  \quad \text{and} \quad
   N (M^{-1}) = \begin{pmatrix} d & \frac{bc}{r} \\ r & a
       \end{pmatrix} ,
\]
as defined in the proof of \cite[Prop.~6]{BRW}, and note that they are
not inverses of each other. However, by \cite[Prop.~5]{BRW}, there is
some matrix $P_{n} \in \GL (2, \ZZ/n\ZZ)$ with $M = P_{n}^{} N(M)
P_{n}^{-1}$, hence we also have $M^{-1} = P_{n}^{} \bigl(
N(M)\bigr)^{-1} P_{n}^{-1}$. Observe next that
\[
   \bigl( N(M)\bigr)^{-1} = \begin{pmatrix} d & - \ts \frac{bc}{r} 
       \\ - r & a \end{pmatrix}  
   = C \begin{pmatrix} d & \frac{bc}{r} \\ r & a
       \end{pmatrix} C^{-1}
   = C N (M^{-1}) C^{-1} ,
\]
where $C = \left( \begin{smallmatrix} 1 & 0 \\ 0 & -1 \end{smallmatrix}
\right)$ is an involution. On the other hand, $N(M)$ and
$N(M^{-1})$ satisfy the assumptions of \cite[Prop.~6]{BRW}, so that
\[
   N(M^{-1}) = A N(M) A^{-1} \quad \text{with} \quad
   A = \begin{pmatrix} 1 & \frac{d-a}{r} \\ 0 & 1 
   \end{pmatrix} ,
\]
where we globally have $A=\left(\begin{smallmatrix}
1 & 0 \\ 0 & -1 \end{smallmatrix}\right)$ whenever $d=a$
in the original matrix $M$.
Together with the previous observation, this implies
$\bigl( N(M)\bigr)^{-1} = (CA)\ts  N(M) \ts (CA)^{-1}$ where
\[
   CA = \begin{pmatrix} 1 & \frac{d-a}{r} \\ 0 & -1
   \end{pmatrix}
\]
is an involution. Putting everything together, we have
\[
   M^{-1} = \bigl(P_{n}^{} (CA) P_{n}^{-1} \bigr) M
   \bigl(P_{n}^{} (CA) P_{n}^{-1} \bigr)^{-1},
\]
which is the claimed conjugacy by an involution (which
depends on $n$ in general).
\end{proof}

Note that the matrix $M$ in Theorem~\ref{thm:local-rev} need not be
reversible in $\GL(2,\ZZ)$, as the example $M=\left(
\begin{smallmatrix} 4 & 9 \\ 7 & 16 \end{smallmatrix} \right)$ from
\cite[Ex.~2]{BRcat} shows. Nevertheless, for any $M\in \SL (2,\ZZ)$
with $\mgcd(M)\ne 0$ and $n\ge 2$, the (finite) reversing symmetry group
of $M$ within $\GL (2, \ZZ/n\ZZ)$ is always of the form $\cR (M) = \cS(M)
\rtimes C_2$, with $C_2$ being generated by the involutory
reversor. The structure of $\cS(M)$ remains to be determined.

In the formulation of Theorem~\ref{thm:local-rev}, we have focused on
matrices $M\in\SL (2,\ZZ)$ because the condition $\trace(M) =
\trace(M^{-1})$ for a matrix $M$ with $\det(M)=-1$ forces
$\trace(M)=0$, which means that $M$ is itself an involution (and thus
trivially reversible in $\GL(2,\ZZ)$). More interesting (beyond
Theorem~\ref{thm:local-rev}) is the question which matrices $M\in \Mat
(2,\ZZ)$, when considered mod $n$ for some $n\in\NN$, are reversible
in $\GL (2, \ZZ/n\ZZ)$.  Let us begin with $n=p$ being a prime, where
$\ZZ/p\ZZ\simeq \FF_{p}$ is the finite field with $p$ elements.

\subsection{Reversibility in $\GL(2,\FF_{p})$}\label{sec:4.2}
Let us consider the symmetry and reversing symmetry group of an
element of $\GL(2,\FF_{p})$ with $p$ prime, the latter being a group
of order
\[
   \lvert \GL(2,\FF_{p}) \rvert \, = \, 
   (p^{2} - 1)(p^{2} - p) \, = \,
   p \ts (p-1)^{2} (p+1) \ts ,
\]
compare Eq.~\eqref{eq:GL-Fp}. For our further discussion, it is
better to distinguish $p=2$ from the odd primes. For convenience, we
summarise the findings also in Table~\ref{tab:finite-field}.

\begin{example}
For $p=2$, one has $\GL(2,\FF_{2}) = \SL(2,\FF_{2})\simeq D_{3}$, the
latter denoting the dihedral group of order $6$. There are now three
conjugacy classes to consider, which may be represented by the
matrices $\one$, the involution $R= \left( \begin{smallmatrix} 0 & 1
\\ 1 & 0 \end{smallmatrix}\right)$, and the matrix
$M=\left( \begin{smallmatrix} 1 & 1 \\ 1 & 0
\end{smallmatrix}\right)$ of order $3$. The corresponding cycle 
structure on $L_{2}$ is encapsulated in the generating polynomials
$Z_{2} (t)$. They read
\[
   (1-t)^{4} \, , \quad (1-t)^{2} (1-t^2)
   \quad \mbox{and} \quad
   (1-t) (1-t^{3}) \ts ,
\]
respectively, and apply to entire conjugacy classes of
matrices. 

For the (reversing) symmetry groups, one clearly has $\cR (\one) = \cS
(\one) = \GL(2,\FF_{2})$, while $\cR (R) = \cS (R) = \langle R \ts
\rangle \simeq C_{2}$. The only nontrivial reversing symmetry group
occurs in the third case, where $\cS (M) = \langle M \ts \rangle
\simeq C_{3}$. Since $RMR = M^{2} = M^{-1}$, one has $\cR (M) =
\GL(2,\FF_{2}) \simeq C_{3} \rtimes C_{2}$. So, all elements of
$\GL(2,\FF_{2})$ are reversible, though only $M$ and $M^2$ are
nontrivial in this respect.
\exend
\end{example}

\medskip For $p$ an odd prime, one can use the normal forms
for $\GL(2,\FF_{p})$, see \cite[Ch.~XVIII.12]{Lang},
to formulate the results; compare Table~\ref{tab:finite-field}. We
summarise the reversibility and orbit structure here, but omit proofs
whenever they emerge from straight-forward calculations.

\smallskip
I. The first type of conjugacy class is represented by matrices $M= a
\one$ with $a \in \FF_{p}^{\times} \simeq C_{p-1}$. The order of $M$
coincides with the order of $a$ mod $p$, $\ord (a,p)$, which divides
$p-1$.  One clearly has $\cR (M) = \cS (M) = \GL (2,\FF_{p})$ in this
case, either because $a^2=1$ (so that $M=M^{-1}$) or because
$a^2\ne 1$ (so that no reversors are possible).
The corresponding orbit structure on $L_{p}$ comprises one
fixed point ($x=0$) together with $\frac{p^{2} - 1}{\ord (a, p)}$
orbits of length $\ord (a,p)$. The non-trivial orbits starting from
some $x\ne 0$ must all be of this form, as $x$ gets multiplied by $a$
under the action of $M$ and returns to itself precisely when $a^k=1$,
which first happens for $k=\ord(a,p)$.

\smallskip
II. The next type of conjugacy class is represented by matrices
$M=\left(\begin{smallmatrix} a & 1 \\ 0 & a\end{smallmatrix} \right)$
with $a\in\FF_{p}^{\times}$. Its symmetry group is given by
\[
   \cS (M) \, = \, \left\{
   \left(\begin{smallmatrix} \alpha & \beta \\
     0 & \alpha \end{smallmatrix} \right) \big | \,
     \alpha \in \FF_{p}^{\times} , \, \beta \in \FF_{p}^{}
    \right\} \, \simeq \, C_{p} \times C_{p-1} \ts ,
\]
which is Abelian. As generators of the cyclic groups, one can choose
$\left( \begin{smallmatrix} 1 & 1 \\ 0 & 1 \end{smallmatrix} \right)$,
which has order $p$ in $\GL (2,\FF_{p})$, and $\gamma \one$, with
$\gamma$ a generating element of $\FF_{p}^{\times}$.  The reversible
cases are precisely the ones with $a^2 = 1$ in $\FF_{p}$, hence with
$\det (M) = 1$.  Here, $R=\diag (1,-1)$ is a possible choice for the
(involutory) reversor, so that $\cR (M) = \cS (M) \rtimes \langle R
\ts \rangle \simeq (C_{p} \times C_{p-1})\rtimes C_{2}$.

A matrix $M$ of type II (in its normal form as in
Table~\ref{tab:finite-field}) satisfies 
\[
   M^k =\begin{pmatrix}a^k & k a^{k-1}\\ 0 &
   a^k\end{pmatrix}   \qquad \text{for $k\ge 0\,$,}
\]
whence a point $(x,0)$ with $x\ne 0$ is fixed by $M^k$ if and only if
$k=\ord(a,p)$, and a point $(x,y)$ with $x y \ne 0$ if and
only if $p | k$ and $\ord(a,p) | k$.  Since $\ord(a,p)|(p-1)$, one has
$\lcm(p,\ord(a,p))=1$, wherefore this gives $\frac{p-1}{\ord(a,p)}$
orbits of length $p-1$ and $\frac{p\cdot (p-1)}{p\cdot \ord(a,p)} =
\frac{p-1}{\ord(a,p)}$ orbits of length $p \ord(a,p)$ in total.

\smallskip
III. The third type of conjugacy class is represented by
$M = \diag (a,b)$ with $a,b \in \FF_{p}^{\times}$ and $a\ne b$.
This results in $\cS (M) = \{ \diag (\alpha, \beta) \mid
\alpha, \beta \in \FF_{p}^{\times} \} \simeq C_{p-1}^{2}$.
The condition for reversibility leads either to
$a^2 = b^2 = 1$, hence to $b=-a$, or to $ab=1$.
In the former case, $M$ itself is an involution,
so that $\cR (M) = \cS (M)$ is once again the trivial
case, while $\det (M) = ab=1$ leads to genuine reversibility,
with involutory reversor $R=\left(\begin{smallmatrix} 
0 & 1 \\ 1 & 0 \end{smallmatrix} \right)$ and hence 
to $\cR (M) = \cS (M) \rtimes C_{2}$.

{}For a type III matrix, one has $M^k (x,y)^t = (a^k x, b^k y)^t$, so
each of the $p\! - \! 1$ non-zero points $(x,0)^t$ is fixed by
$M^{\ord(a,p)}$; analogously, each of the $p\! - \! 1$ non-zero points
$(0,y)^t$ is fixed by $M^{\ord(b,p)}$.  The remaining points that
are non-zero in both coordinates have period
$\lcm(\ord(a,p),\ord(b,p))$.  In summary, this gives one fixed point,
$\frac{p-1}{\ord(a,p)}$ orbits of length $\ord(a,p)$,
$\frac{p-1}{\ord(b,p)}$ orbits of length $\ord(b,p)$, and
$\frac{(p-1)^2}{\lcm(\ord(a,p),\ord(b,p))}$ orbits of length
$\lcm(\ord(a,p),\ord(b,p))$.

\begin{table}\label{tab:finite-field}
  \caption{Summary of conjugacy structure for $\GL (2,\FF_{p})$ via
    normal forms.  Note that class III is absent for $p=2$. 
    The second possibility for $\mathcal{R} (M)$ always applies
    when $\det (M)=1$. Only non-trivial orbits are counted.}
\renewcommand{\arraystretch}{1.3}
\begin{tabular}{|l|cccc|}
\hline
class & I & II & III & IV \\ \hline 
normal form & $a\ts \one$ 
& $\left(\begin{smallmatrix}a & 1\\
  0 &a \end{smallmatrix}\right)$
& $\left(\begin{smallmatrix}a & 0\\ 0 &b
\end{smallmatrix}\right)$ &
$\left(\begin{smallmatrix}0 & -D\\ 1 &T
\end{smallmatrix}\right)$ \\ 
of matrix class &
$a\in\mathbb{F}_{p}^{\times}$ & 
$a\in\mathbb{F}_{p}^{\times}$ &
$a \ne b\in \mathbb{F}_{p}^{\times}$ & $z^2 - Tz + D$
irred. \\ 
\hline
min.\ polynomial & $(z-a)$ & $(z-a)^2$ & $(z-a)(z-b)$ & $z^2 - Tz +
D$ \\ \hline size of class & $1$ & $p^2 -1$ & $p^2 + p$ & $p^2 - p$ \\ 
\hline 
no.\ of classes & $p-1$ & $p-1$ & $\frac{1}{2}(p-1)(p-2)$ &
$\frac{1}{2}\ts  p \ts (p-1)$\\ 
\hline 
$\mathcal{S}(M)$ & $\GL(2,\mathbb{F}_p)$
& $C_p\times C_{p-1}$ & $C_{p-1}\times C_{p-1}$ & $C_{p^2-1}$\\ 
\hline
$\mathcal{R}(M)$ & $\mathcal{S}(M)$ & $\mathcal{S}(M)$ or &
$\mathcal{S}(M)$ or & $\mathcal{S}(M)$ or \\ & & 
$\mathcal{S}(M)\rtimes C_2$ &
$\mathcal{S}(M)\rtimes C_2$ & $\mathcal{S}(M)\rtimes C_2$ \\ 
\hline
orbit length & $\ord(a,p)$ & see text & see text &
$\ord(\chi^{}_M,p)$\\ 
\hline 
orbit count\raisebox{0ex}[3ex][1.8ex]{} &
$\frac{p^2-1}{\ord(a,p)}$& see text & see text &
$\frac{p^2-1}{\ord(\chi^{}_M,p)}$ \\ 
\hline
\end{tabular}
\end{table}

\smallskip
IV. Finally, the last type of conjugacy class can be represented by
companion matrices of the form $\left(\begin{smallmatrix} 0 & -D \\ 1
& T \end{smallmatrix} \right)$ with the condition that the
characteristic polynomial $z^2 - T z + D$ is irreducible over
$\FF_{p}$.  The determinant and the trace satisfy $D = \eta \eta^{\ts
\prime}$ and $T=\eta + \eta^{\ts \prime}$, where $\eta$ and $\eta^{\ts
\prime}$ are not in $\FF_{p}$, but distinct elements of the splitting
field of the polynomial, which can be identified with $\FF_{p^2}$. One
consequence is that $1+D \pm T = (1\pm\eta)(1\pm\eta^{\ts
\prime}) \ne 0$.

The symmetry group is $\cS (M) = \{ \alpha \one + \gamma M \mid
\alpha, \gamma \in \FF_{p}, \mbox{ not both $0$} \}$, which is an
Abelian group with $p^2 - 1$ elements. The order follows from the
observation that $\det (\alpha \one + \gamma M ) = (\alpha + \gamma
\eta) (\alpha + \gamma \eta^{\ts\prime})$ vanishes only for $\alpha =
\gamma = 0$ in this case. In fact, one has $\cS (M) \simeq C_{p^2 -
1}$, as any matrix $\left(\begin{smallmatrix} 0 & -\eta
\eta^{\ts\prime} \\ 1 & \eta + \eta^{\ts\prime}
\end{smallmatrix}\right) \in \GL(2,\FF_{p})$ with $\eta \in \FF_{p^2}
\setminus \FF_{p}$ has order $p^2-1$ or possesses a root in
$\GL(2,\FF_{p})$ of that order. This relies on the facts that we can
always write $\eta = \lambda^m$, where $\lambda$ is a generating
element of $\FF_{\! p^2}^{\times}\simeq C_{p^2-1}$, and that $\lambda
\lambda^{\prime}$ and $\lambda + \lambda^{\prime}$ are in $\FF_{p}$.
This is a special case of Fact~\ref{f:esp} below and of a statement on
the existence of roots in $\GL(d,\ZZ)$; see Lemma~\ref{lem:root} below.

The condition for reversibility, in view of the above restriction
on $D$ and $T$, can only be satisfied when $D=1$, in which case
$R=\left(\begin{smallmatrix} 0 & 1 \\ 1
& 0 \end{smallmatrix} \right)$ turns out to be an involutory
reversor, so that again $\cR (M) = \cS (M) \rtimes C_{2}$ 
in this case.

Matrices with irreducible characteristic polynomial $\chi^{}_M$
produce orbits of one length $r$ only, where $r$ is the smallest
integer such that $\chi^{}_M (z) | (z^r -1)$, or, equivalently, the
order of its roots in the extension field $\mathbb{F}^{}_{p^2}$.

\smallskip

Putting these little exercises together gives the following result.
\begin{theorem} \label{thm:mod-rev} 
  A matrix\/ $M\in\GL (2,\FF_{p})$ is reversible within this group if
  and only if\/ $M^2 = \one$ or\/ $\det (M) = 1$. Whenever\/ $M^2 =
  \one$, one has\/ $\cR (M) = \cS (M)$.  If\/ $\det (M)=1$ with\/
  $M^2\ne \one$, there exists an involutory reversor, and one has\/
  $\cR (M) = \cS (M) \rtimes C_{2}$.  \qed
\end{theorem}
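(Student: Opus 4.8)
The plan is to read the criterion and the group structure off the case analysis of the conjugacy classes I--IV (together with the $p=2$ example), which between them exhaust $\GL(2,\FF_p)$. Rather than re-examining each normal form separately, I would first give a uniform argument for the reversibility criterion. Reversibility means that $M$ and $M^{-1}$ are conjugate in $\GL(2,\FF_p)$, and in dimension two a conjugacy class is determined by the characteristic polynomial together with the minimal polynomial (equivalently, by whether $M$ is scalar in the repeated-root case). Since the eigenvalues of $M^{-1}$ in $\FF_p$ or $\FF_{p^2}$ are the reciprocals of those of $M$, I would compare the two unordered eigenvalue pairs $\{\lambda_1,\lambda_2\}$ and $\{\lambda_1^{-1},\lambda_2^{-1}\}$. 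Their equality forces either that every eigenvalue is self-reciprocal, hence lies in $\{1,-1\}$, or that inversion swaps the two eigenvalues, hence that their product $\det(M)=1$.

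The self-reciprocal alternative needs a short case distinction according to the minimal polynomial. If $M$ is scalar, then $M=a\one$ with $a^2=1$, giving $M^2=\one$. If $M$ is diagonalisable with distinct eigenvalues (type III), this possibility forces $\{a,b\}=\{1,-1\}$, again $M^2=\one$. If $M$ is non-semisimple (type II), both $M$ and $M^{-1}$ are non-scalar with a single repeated eigenvalue, so they are conjugate precisely when $a=a^{-1}$, i.e.\ $a^2=\det(M)=1$, and here $M^2\neq\one$ for odd $p$. Finally, the irreducible case (type IV) can never satisfy $M^2=\one$, since that would make the characteristic polynomial divide $z^2-1$ and hence be reducible; there reversibility is exactly $D=\det(M)=1$. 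Collecting these observations yields the stated equivalence that $M$ is reversible if and only if $M^2=\one$ or $\det(M)=1$, and the same bookkeeping confirms the assertion for $p=2$, where every element has determinant $1$.

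For the group structure I would invoke the general dichotomy recalled at the start of Section~\ref{sec:revsym}. When $M^2=\one$ one has $M=M^{-1}$, so any $G$ with $GMG^{-1}=M^{\pm1}$ already satisfies $GMG^{-1}=M$; hence $\cR(M)=\cS(M)$. When $\det(M)=1$ and $M^2\neq\one$, the assignment sending $G\in\cR(M)$ to $+1$ or $-1$ according as $GMG^{-1}$ equals $M$ or $M^{-1}$ is a homomorphism onto $C_2$ with kernel $\cS(M)$, so $\cS(M)$ is normal of index two. It then remains to exhibit a splitting, and for this I would reuse the explicit involutory reversors produced in the case analysis: $\diag(1,-1)$ for type II and the antidiagonal involution for types III and IV (and for $p=2$). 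Each such $R$ is an involution lying outside $\cS(M)$, so it generates a complement $C_2$ and gives $\cR(M)=\cS(M)\rtimes C_2$.

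The only genuinely delicate point is the non-semisimple type II case, where the characteristic polynomial alone does not determine the conjugacy class; one must note separately that inversion preserves non-scalarity and the repeated-eigenvalue structure, so that the conjugacy of $M$ and $M^{-1}$ reduces cleanly to $a=a^{-1}$. Everything else is bookkeeping over the four normal forms, and the uniform eigenvalue argument is there mainly to make the criterion transparent rather than piecewise.
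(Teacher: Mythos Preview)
Your proposal is correct and follows essentially the same route as the paper, which derives the theorem by assembling the case-by-case analysis of the four conjugacy-class types I--IV (together with the $p=2$ example). Your eigenvalue-comparison argument gives a slightly more uniform packaging of the necessity direction, but you still rely---as the paper does---on the explicit normal-form computations for the involutory reversors and for the type~II subtlety, so the two arguments coincide in substance.
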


\begin{remark}
Since $\FF_{p}$ is a field, we can use the following dichotomy to
understand the structure of $\cS(M)$, independently of the chosen
normal forms. A matrix $M\in \GL (2,\FF_{p})$ is either a multiple of
the identity (which then commutes with every element of $\Mat
(2,\FF_{p})$) or it possesses a cyclic vector (meaning an element $v
\in \FF_{p}^{2}$ such that $v$ and $Mv$ form a basis of
$\FF_{p}^{2}$). In the latter case, $M$ commutes precisely with the
matrices of the ring $\FF_{p} [M]$, and we have $\cS(M) = \FF_{p}
[M]^{\times} \! = \FF_{p} [M] \cap \GL (2,\FF_{p})$. This systematic
approach provides an alternative (but equivalent)
parametrisation of the above results for the normal forms.  \exend
\end{remark}

The question for reversibility in $\GL (2,\ZZ/n\ZZ)$ with general $n$
is more complicated. The matrix $M=\left(\begin{smallmatrix} 0 & -4 \\
    1 & 0 \end{smallmatrix}\right)$ is reversible over $\ZZ/3\ZZ$
(where it is an example of type IV), but fails to be reversible over
$\ZZ/9\ZZ$, as one can check by a direct computation. Here, zero
divisors show up via non-zero matrices $A$ with $AM = M^{-1}A$, but
all of them satisfy $\det (A) \equiv 0$ mod $9$. In fact, one always
has $A(L_{9}) \subset L_{3}$ here.

In general, the relation $A\ts MA^{-1} = M^{-1}$ with
$A,M\in \GL (2,\ZZ/n\ZZ)$ implies $MA\ts M=A$ and hence $\det (M)^2 = 1$,
because $\det (A) \in (\ZZ/n\ZZ)^{\times}$.  Over $\FF_{p}$, this gives
$\det (M) = \pm 1$, with reversibility precisely for $\det (M) = 1$
according to Theorem~\ref{thm:mod-rev}.  In general, one has further
solutions of the congruence $m^2 \equiv 1 \bmod{n}$, such as $m=3$ for
$n=8$ or $m=4$ for $n=15$.

In any such case, $M=\left(\begin{smallmatrix} 0 & -m \\
    1 & 0 \end{smallmatrix}\right)$ is a matrix with $M^{2} = - m \ts
\one $. When $m\not\equiv -1 \bmod{n}$, $M$ is of order $4$ in $\GL
(2,\ZZ/n\ZZ)$. It is easy to check that $RMR = M^{-1}=
\left(\begin{smallmatrix} 0 & 1 \\ -m & 0 \end{smallmatrix}\right)$ in
$\GL (2,\ZZ/n\ZZ)$, with
the involution $R=\left(\begin{smallmatrix} 0 & 1 \\
    1 & 0 \end{smallmatrix}\right)$. This establishes reversibility
with $\cR (M) = \cS (M) \rtimes C_{2}$.

\subsection{Some extensions to higher dimensions}\label{sec:4.3}
In principle, a similar reasoning, based on a normal form approach,
can be applied to arbitrary dimensions.  Over the finite field
$\mathbb{F}_p$, normal forms are given by the rational canonical form
and the elementary divisor normal form (`First' and `Second natural
normal form' in the terminology of \cite[\text{\S}6]{Gant}), which are
block diagonal matrices with companion matrices on the diagonal.

The advantage of dealing with companion matrices is that one can
employ the theory of linear recursions: there is a one-to-one
correspondence between the cycle lengths modulo $n\in\NN$ of a certain
initial condition $\boldsymbol{u}=(u_0,\ldots,u_{d-1})$ under the
recursion induced by the polynomial $f$, and the period of the
corresponding point $\boldsymbol{u}^t$ under the matrix iteration of
$C_f$; compare the final remark in \cite{Z}, and Section~\ref{sec:pim}.

Working with a block diagonal matrix of this shape, the analysis can
be done block-wise; in particular, the symmetry groups are the direct
product of the symmetry groups of the component matrices on the
diagonal, augmented by all additional symmetries that emerge from
equal blocks, which can be permuted.

Determining the period lengths associated with irreducible polynomials
amounts to finding their orders in the sense of \cite[Def.~3.3.2]{LN}.
The periods and their multiplicities arising from the powers of
irreducible polynomials that show up in the factorisation of the
invariant factors (the elementary divisors) are then given by
\cite[Thm.~4]{Z}.
    
Extending the analysis to matrices over the local rings $\ZZ/p^r \ZZ$
is more difficult.  In general, it seems hard to write down an
exhaustive system of normal forms for the similarity classes, and to
decide whether given matrices are similar.  However, a solution for a
large subclass of square matrices over the $p$-adic integers $\ZZ_p$
and the residue class rings $\ZZ/ p^r \ZZ$ is presented in \cite{D}.
For a polynomial $f\in\ZZ_p[x]$ whose reduction modulo $p$ has no
multiple factors, a complete system of $d\!\times\! d$ matrix
representatives $X$ with respect to similarity that satisfy
$f(X)\equiv 0 \mod p^r$ is given by all direct sums of companion
matrices which are in agreement with the factorisation of $f$ mod $p$.
For instance, if the reduction of the common characteristic polynomial
modulo $p$ of two matrices does not have any quadratic factors, the
matrices are conjugate mod $p^r$ if and only if they are conjugate mod
$p$ \cite[Thm.~3 and Corollary]{D}.

An exhaustive treatment of conjugacy classes of $3\!\times\! 3$ matrices
over an arbitrary local principal ideal ring can be found in
\cite{AOPV}.

\begin{remark} 
  In \cite{AOPV}, it is pointed out that $2\times 2$ matrices over a
  local ring can be decomposed into a scalar and a cyclic part.  Over
  $\ZZ/ p^r \ZZ$, this decomposition reads
    \[
	M = d\one + p^{\ell} C,
    \]
    where $p^{\ell} = \gcd(\mgcd(M),p^r) = p^{v_p(\mgcd(M))}$ with
    $v_{p}$ denoting the standard $p$-adic valuation, unique
    $d\in \{\sum_{j=0}^{\ell-1} a_j p^j\mid p\nmid a_j\}$ and cyclic
    $C\in\mathrm{Mat}(2,\ZZ/p^{r-\ell}\ZZ)$, which is unique up to
    similarity.  Moreover, $C$ can be chosen as a companion matrix
    with the appropriate trace and determinant.

    Since $d\one$ and $C$ commute, powers of $M$ can be expanded
    via the binomial theorem. Using that the binomials satisfy
    $\frac{n}{\gcd(n,k)}|\binom{n}{k}$, the period
    $\mathrm{per}(x,p^r)$ of all $x\in\mathrm{L}_{p^r}$ is bounded by
    \[
	\mathrm{per}(x,p^r) \leq \ord(d,p^r) \cdot p^{r-\ell},
    \]
    provided that $1\le \ell \le r$.  Let $\Pi_j : \ZZ/ p^r \ZZ
    \rightarrow \ZZ/ p^j \ZZ$ denote the canonical projection, and let
    $\mathcal{S}_j(A)$ be the 
    symmetry group of an integer matrix $A$, viewed as a matrix over
    $\ZZ/ p^j \ZZ$.  Then, for $p\not= 2$ and $\ell\geq 1$, one
    obtains $\mathcal{S}_r (M) = \Pi_{\ell} ^{-1} (\mathcal{S}_{\ell}(C))$
    from the symmetry equations. \exend
\end{remark}

\subsection{Reversibility mod $n$}\label{sec:4.4}

Let $M$ be a general integer matrix, with determinant $D$.  

\begin{fact}\label{Dsquare}
    If\/ $M \in \Mat (d,\ZZ)$ is reversible mod\/ $n$,
    one has $D^2\equiv 1$ mod\/ $n$. Moreover, reversibility
    for infinitely many $n$ implies\/ $D = 1$ or\/ $D = -1$.
\end{fact}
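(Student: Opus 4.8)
The plan is to derive both assertions directly from the defining relation of reversibility, exploiting only the fact that the conjugating element lies in the unit group. By definition, $M$ being reversible mod $n$ means $M \in \GL(d,\ZZ/n\ZZ)$ and there is some $A \in \GL(d,\ZZ/n\ZZ)$ with $A\ts M A^{-1} \equiv M^{-1} \bmod n$. This is exactly the two-dimensional computation already sketched in the text preceding Fact~\ref{Dsquare}, and the argument carries over verbatim to arbitrary $d$.

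First I would rewrite the reversing relation as $A\ts M \equiv M^{-1} A \bmod n$ and multiply on the left by $M$ to obtain $M A\ts M \equiv A \bmod n$. Taking determinants of both sides gives $D^2 \det(A) \equiv \det(A) \bmod n$, where $D = \det(M)$. Since $A \in \GL(d,\ZZ/n\ZZ)$, its determinant $\det(A)$ is a unit in $\ZZ/n\ZZ$, so it may be cancelled even though $\ZZ/n\ZZ$ is not a domain; this yields $D^2 \equiv 1 \bmod n$, which is the first claim.

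For the second claim I would pass from the congruence to a divisibility statement over $\ZZ$. If $M$ is reversible for infinitely many $n$, then $D^2 \equiv 1 \bmod n$, equivalently $n \mid (D^2 - 1)$, holds for infinitely many $n \in \NN$. Now $D^2 - 1$ is a fixed integer, and a nonzero integer has only finitely many divisors; hence the only way infinitely many $n$ can divide $D^2 - 1$ is that $D^2 - 1 = 0$, that is, $D^2 = 1$ and so $D = 1$ or $D = -1$.

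The argument is entirely elementary, so there is no substantial obstacle. The one point that warrants care is the cancellation step in the non-domain ring $\ZZ/n\ZZ$: it is legitimate precisely because $\det(A)$ is a unit, which in turn is guaranteed by $A$ lying in $\GL(d,\ZZ/n\ZZ)$ rather than merely in $\Mat(d,\ZZ/n\ZZ)$. (The phenomenon that non-unit conjugators can fail to force $D^2 \equiv 1$ is exactly what the $\ZZ/9\ZZ$ example earlier in Section~\ref{sec:4.2} illustrates, so it is worth keeping this hypothesis explicit.)
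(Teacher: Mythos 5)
Your proof is correct and follows essentially the same route as the paper: both derive $D^2\equiv 1 \bmod n$ by taking determinants in the reversibility relation (the paper phrases this as $\det M \equiv \det M^{-1}$, you spell out the cancellation of the unit $\det(A)$ explicitly), and both conclude the second claim from the fact that a nonzero integer $D^2-1$ has only finitely many divisors. Your extra care about cancelling $\det(A)$ in the non-domain ring $\ZZ/n\ZZ$ is a welcome clarification but not a different argument.
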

\begin{proof}
The reversibility equation yields $\det M \equiv \det M^{-1}$, hence
$D^2 \equiv 1\mod n$. If $D^2 -1$ has infinitely many divisors, one
has $D^2 = 1$, hence $D = 1$ or $D = -1$.
\end{proof}

Before we continue with some general result, let us pause
to see what Fact~\ref{Dsquare} specifically implies for $d=2$.
\begin{fact}\label{tr0}
   If\/ $M \in \Mat (2,\ZZ)$ with $D\equiv -1$ mod\/ $n$ is
   reversible mod\/ $n$, one has\/ $2\ts \trace (M) \equiv 0$ 
   mod\/ $n$.  In particular, $\trace (M) \equiv 0$ mod\/ $n$ 
   holds whenever\/ $n$ is odd.
\end{fact}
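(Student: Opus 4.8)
The plan is to exploit the fact that, for a $2\times 2$ matrix, the trace of the inverse is completely controlled by the trace and determinant of the matrix itself; the hypothesis $D\equiv -1$ then turns inversion into a sign flip on the trace, which the reversibility equation can only tolerate when $2\ts\trace(M)$ vanishes.

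First I would record, exactly as in the determinant argument of Fact~\ref{Dsquare}, that reversibility mod $n$ means $M$ and $M^{-1}$ are $\GL(2,\ZZ/n\ZZ)$-conjugate. Conjugate matrices share the same trace, so this immediately yields $\trace(M)\equiv\trace(M^{-1})\bmod n$. Note that $M^{-1}$ genuinely makes sense mod $n$ here, since $D\equiv -1$ is a unit in $\ZZ/n\ZZ$, so that $M$ is invertible mod $n$ and conjugation to $M^{-1}$ is legitimate.

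Second, I would invoke the Cayley--Hamilton relation $M^{2}-\trace(M)\ts M+D\ts\one=0$, which for invertible $M$ rearranges to $M^{-1}=D^{-1}\bigl(\trace(M)\ts\one-M\bigr)$. Taking traces and using $\trace(\one)=2$ gives $\trace(M^{-1})\equiv D^{-1}\trace(M)\bmod n$. Since $D\equiv -1$ forces $D^{-1}\equiv -1$, this reads $\trace(M^{-1})\equiv -\trace(M)\bmod n$. Combining with the previous congruence yields $\trace(M)\equiv -\trace(M)$, that is, $2\ts\trace(M)\equiv 0\bmod n$, which is the first assertion. For the second, when $n$ is odd the element $2$ is a unit in $\ZZ/n\ZZ$ and may be cancelled, leaving $\trace(M)\equiv 0\bmod n$.

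I do not expect a genuine obstacle here: once the $2\times 2$ trace--inverse identity is in hand the whole argument is a two-line computation. The only point requiring a moment's care is the invertibility of $M$ mod $n$, which is guaranteed by $D\equiv -1$ being a unit and is what makes both the passage to $M^{-1}$ and the Cayley--Hamilton rearrangement valid.
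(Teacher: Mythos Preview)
Your proof is correct and is essentially the paper's own argument: both use that conjugacy preserves the trace to get $\trace(M)\equiv\trace(M^{-1})$, and both compute $\trace(M^{-1})\equiv D^{-1}\trace(M)\equiv -\trace(M)$ (the paper says ``inversion formula for $2\times 2$ matrices'' where you invoke Cayley--Hamilton, which amounts to the same identity). Your added remarks on invertibility of $M$ mod $n$ and on cancelling the unit $2$ for odd $n$ are fine and simply make explicit what the paper leaves implicit.
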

\begin{proof}
   The trace is a conjugacy invariant, so reversibility mod $n$
   implies $\trace (M) \equiv \trace (M^{-1}) \mod n$.  The inversion
   formula for $2\times 2$ matrices yields $\trace (M^{-1}) \equiv
   \frac{\trace (M)}{D}\equiv - \trace (M) \mod n$, and thus $2\ts
   \trace (M) \equiv 0\mod n$.
\end{proof}

\begin{fact}\label{InvolTr0}
   Consider\/ $M\in \Mat(2,\ZZ)$ with\/ $D\equiv -1$ mod\/ $n$.  Then,
   $M$ is an involution mod\/ $n$ if and only if\/ $\trace (M) \equiv
   0$ mod\/ $n$.
\end{fact}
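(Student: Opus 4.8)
The plan is to base everything on the Cayley--Hamilton relation and let the hypothesis $D\equiv -1$ do the work. Writing $T=\trace (M)$ and $D=\det (M)$, the characteristic polynomial of $M$ is $z^2 - Tz + D$, so Cayley--Hamilton gives the integer-matrix identity $M^2 = TM - D\ts\one$. Reducing mod $n$ and inserting the assumption $D\equiv -1 \bmod{n}$ turns this into $M^2 \equiv TM + \one \bmod{n}$. The first step is therefore to record that the whole statement collapses to the single equivalence $M^2 \equiv \one \iff TM \equiv 0 \bmod{n}$; once this is isolated, both implications become short.

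With this reformulation in hand, the ``if'' direction is immediate: if $\trace (M) = T \equiv 0 \bmod{n}$, then $TM \equiv 0$, hence $M^2 \equiv \one$. The ``only if'' direction is where the genuine (if modest) content lies, and it is the one place I expect a small obstacle. From $M^2 \equiv \one$ we obtain $TM \equiv 0 \bmod{n}$, and I must pass from this to $T \equiv 0 \bmod{n}$. This is not automatic for an arbitrary matrix --- the entries of $TM$ vanishing mod $n$ does not by itself force $T \equiv 0$ --- and it is precisely the hypothesis $D\equiv -1$ that removes the difficulty.

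The key observation is that $D \equiv -1$ is a \emph{unit} in $\ZZ/n\ZZ$, so $M$ is invertible in $\GL (2,\ZZ/n\ZZ)$. Multiplying $TM \equiv 0$ on the right by $M^{-1}$ then gives $T\ts\one \equiv 0 \bmod{n}$, and reading off a diagonal entry yields $T \equiv 0 \bmod{n}$, which completes the equivalence. I would keep the argument at this level rather than expanding $M$ into entries $a,b,c,d$: the entrywise route also works (one checks that $n$ divides $Ta, Tb, Tc, Td$ and combines this with $ad-bc \equiv -1$), but it is messier and obscures the clean role played by invertibility. Note also that this fact dovetails with Fact~\ref{tr0}, whose conclusion $2\ts\trace (M) \equiv 0 \bmod{n}$ becomes the genuine involution condition exactly when the extra factor of $2$ can be removed.
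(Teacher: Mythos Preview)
Your argument is correct. The paper's own proof is shorter and more concrete: it writes $M=\left(\begin{smallmatrix} a & b \\ c & d \end{smallmatrix}\right)$, observes that with $D\equiv -1$ the inversion formula gives $M^{-1}\equiv\left(\begin{smallmatrix} -d & b \\ c & -a \end{smallmatrix}\right)$, and reads off that $M\equiv M^{-1}$ is equivalent to $a+d\equiv 0$. Your route via Cayley--Hamilton is equivalent in content --- for $2\times 2$ matrices the inversion formula $M^{-1}=D^{-1}(T\one - M)$ \emph{is} Cayley--Hamilton rearranged --- but packages the argument coordinate-free and makes the role of invertibility (hence of the hypothesis $D\equiv -1$) explicit, whereas the paper leaves that implicit in the use of the inversion formula. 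Either way the proof is a two-line computation; your version trades a small amount of extra length for a cleaner logical structure.
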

\begin{proof}
Let $M=\left(\begin{smallmatrix} a & b \\ c & d \end{smallmatrix}
\right)$. With $D \equiv -1$, the inversion formula for $M$
shows  that $M\equiv M^{-1}$ is equivalent to $d \equiv - a$.
Thus, $M^2\equiv \one$ if and only if $\trace (M) \equiv 0$.
\end{proof}

The previous two facts imply
\begin{coro}
   Let\/ $M \in \Mat (2,\ZZ)$ be reversible mod\/ $n > 2$ with
   $D\equiv -1$ mod\/ $n$. Then, $M^2 \equiv \one$ mod\/ $n$ for\/ $n$
   odd, and\/ $M^2 \equiv \one$ mod $n/2$ for\/ $n$ even.  \qed
\end{coro}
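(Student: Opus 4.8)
The plan is to derive the result directly from Facts~\ref{tr0} and \ref{InvolTr0}, which together convert the reversibility hypothesis into a statement about the trace and then into an involution statement. The key leverage is that Fact~\ref{InvolTr0} provides an exact equivalence between $\trace (M) \equiv 0$ and $M^2 \equiv \one$ under the standing assumption $D \equiv -1$; hence the whole task reduces to pinning down the modulus to which the trace vanishes, and the even/odd split of the conclusion will come out of exactly that.

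First I would treat the odd case. Since $M$ is reversible mod $n$ with $D \equiv -1$ mod $n$, Fact~\ref{tr0} applies, and its ``in particular'' clause yields $\trace (M) \equiv 0$ mod $n$ directly when $n$ is odd. Feeding this into Fact~\ref{InvolTr0} with modulus $n$ then gives $M^2 \equiv \one$ mod $n$, as claimed.

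For $n$ even, Fact~\ref{tr0} only delivers $2\ts\trace (M) \equiv 0$ mod $n$. Writing $n = 2\cdot (n/2)$, this is equivalent to $\trace (M) \equiv 0$ mod $n/2$. I would then observe that $D \equiv -1$ mod $n$ forces $D \equiv -1$ mod $n/2$, since $n/2 \mid n$, so the hypothesis of Fact~\ref{InvolTr0} is satisfied with the reduced modulus $n/2$. Applying that fact with this modulus converts $\trace (M) \equiv 0$ mod $n/2$ into $M^2 \equiv \one$ mod $n/2$, which is the asserted conclusion.

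The only point requiring care is the bookkeeping around the factor of $2$: reversibility together with $D \equiv -1$ controls only $2\ts\trace (M)$ mod $n$, not $\trace (M)$ itself, so for even $n$ one necessarily loses a factor of $2$ in the modulus and cannot expect $M^2 \equiv \one$ mod $n$ in general. I do not anticipate a genuine obstacle here — the argument is a short composition of the two preceding facts — but I would present the reduction of the modulus from $n$ to $n/2$ as the decisive step, since it is precisely where the even/odd dichotomy in the statement originates.
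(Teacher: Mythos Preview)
Your proposal is correct and follows exactly the route the paper intends: the corollary is stated without proof precisely because it is the immediate combination of Facts~\ref{tr0} and~\ref{InvolTr0}, and your write-up spells out that combination, including the modulus reduction from $n$ to $n/2$ in the even case.
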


Let us continue with the general arguments and formulate a necessary
condition for local reversibility.
\begin{lemma}\label{necCond}
   Let\/ $p\not= 2$ be a prime.  If\/ $M \in \Mat(d,\ZZ)$ is
   reversible mod $p^r$, one has $D \equiv \pm 1$ mod\/ $p^r$.  
   If\/ $d=2$, $M$ is reversible mod\/ $p^r$ if and
   only if\/ $D\equiv 1$ or\/ $M^2\equiv\one$ mod\/ $p^r$.

   If\/ $M \in \Mat(d,\ZZ)$ is reversible mod\/ $2^r$, then\/ $D
   \equiv \pm 1$ mod\/ $2^{r-1}$. When\/ $d=2$ and\/ $M$ is reversible
   with\/ $D\equiv -1$ mod\/ $2^{r-1}$, one has\/ $M^2 \equiv \one$
   mod\/ $2^{r-2}$.
\end{lemma}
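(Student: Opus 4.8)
Lemma \ref{necCond} has four assertions. Let me sketch my approach to each.

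The plan is to derive everything from the reversibility equation $A M A^{-1} = M^{-1}$ with $A \in \GL(d, \ZZ/p^r\ZZ)$, combined with Fact~\ref{Dsquare} and the determinant/trace observations already established for $d=2$. For the first assertion (odd $p$, general $d$), Fact~\ref{Dsquare} gives $D^2 \equiv 1 \bmod p^r$. The key point is that $\ZZ/p^r\ZZ$ is a local ring whose only solutions to $x^2 = 1$ are $x = \pm 1$: indeed, $p^r \mid (D-1)(D+1)$ forces $p \mid D-1$ or $p \mid D+1$ (not both, since $p \neq 2$ means $p \nmid 2$), and then the full power $p^r$ must divide that single factor. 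Hence $D \equiv \pm 1 \bmod p^r$.

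For the $d=2$ refinement (odd $p$), the strategy is a case split on the sign just obtained. If $D \equiv 1$, reversibility is immediate (this is the content mirroring Theorem~\ref{thm:local-rev}/Theorem~\ref{thm:mod-rev}, where $\det = 1$ always yields a reversor). If $D \equiv -1$, I would invoke Fact~\ref{tr0}: reversibility forces $2\ts\trace(M) \equiv 0 \bmod p^r$, and since $p$ is odd, $2$ is a unit, so $\trace(M) \equiv 0$; then Fact~\ref{InvolTr0} upgrades this to $M^2 \equiv \one \bmod p^r$. For the converse direction, $D \equiv 1$ gives reversibility directly, and $M^2 \equiv \one$ means $M$ is its own inverse, so it is trivially reversible (take $A = \one$). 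This closes the ``if and only if''.

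For the prime $p = 2$ case, the obstruction is precisely the failure of $x^2 = 1$ to have only $\pm 1$ as roots modulo $2^r$: here $2$ is not a unit, and the factorization $2^r \mid (D-1)(D+1)$ only guarantees that $2^{r-1}$ divides one of the factors (since $D-1$ and $D+1$ differ by $2$, they cannot both carry more than one factor of $2$ beyond what is forced). This is why the conclusion weakens to $D \equiv \pm 1 \bmod 2^{r-1}$ rather than mod $2^r$ — this sign/valuation bookkeeping is the main technical obstacle. For the final $d=2$ claim, assuming $D \equiv -1 \bmod 2^{r-1}$, I would again run Fact~\ref{tr0} to get $2\ts\trace(M) \equiv 0 \bmod 2^{r-1}$, hence $\trace(M) \equiv 0 \bmod 2^{r-2}$ (dividing by the factor of $2$), and then apply the $d=2$ inversion formula as in Fact~\ref{InvolTr0} — now working modulo $2^{r-2}$, where $D \equiv -1$ also holds — to conclude $M^2 \equiv \one \bmod 2^{r-2}$. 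The delicate step throughout is tracking how each division by $2$ costs one power in the modulus, which is exactly what produces the shifted exponents $r-1$ and $r-2$ in the statement.
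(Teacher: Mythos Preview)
Your proposal is correct and follows essentially the same approach as the paper's proof, relying on Fact~\ref{Dsquare} for $D^2 \equiv 1$, the coprimality of $D-1$ and $D+1$ (resp.\ the $2$-adic valuation bookkeeping) to pin down the sign, and Facts~\ref{tr0} and~\ref{InvolTr0} to handle the $d=2$ refinements. You are in fact more explicit than the paper about the converse direction of the biconditional for odd $p$, which the paper's proof leaves implicit (presumably via Theorem~\ref{thm:local-rev}).
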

\begin{proof}
For $p\not= 2$, Fact~\ref{Dsquare} implies $D^2 \equiv 1$ mod
$p^r$. Since $p$ cannot divide both $D-1$ and $D+1$, one has $p^r |
(D-1)$ or $p^r | (D+1)$, which gives the first claim.  When $2^r |
(D-1)(D+1)$, $2$ divides one of the factors and $2^{r-1}$ the other
one, so $D\equiv 1$ or $D\equiv -1$ mod $2^{r-1}$.  If $D\equiv -1$
mod $2^{r-1}$, Fact~\ref{tr0} gives $2\ts\trace (M) \equiv 0$ mod
$2^{r-1}$ and thus $M^2\equiv \one$ mod $2^{r-2}$ by
Fact~\ref{InvolTr0}.
\end{proof}

One immediate consequence for $d=2$ is the following.
\begin{coro}
   If\/ $M\in \GL(2,\ZZ)$ with\/ $D=-1$ is reversible for infinitely
   many\/ $n \in \NN$, one has\/ $M^2 = \one$.  \qed
\end{coro}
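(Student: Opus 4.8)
The plan is to combine Fact~\ref{tr0} with a one-line divisibility argument and the Cayley--Hamilton theorem; no serious obstacle is expected, so the main task is to organise the pieces correctly. The key observation to exploit is that the hypothesis $D = -1$ holds \emph{as an integer}, hence $D \equiv -1 \bmod n$ for \emph{every} $n \in \NN$, and that $-1$ is a unit modulo every $n$, so that the reduction of $M$ is genuinely invertible mod $n$ and the notion of reversibility mod $n$ applies without caveat.

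First I would fix an $n$ for which $M$ is reversible mod $n$. Since $D \equiv -1 \bmod n$, Fact~\ref{tr0} applies verbatim and gives $2\ts\trace (M) \equiv 0 \bmod n$. The crucial point is that $\trace (M)$ is a fixed integer determined by $M$, independent of $n$, so this congruence says that the single integer $2\ts\trace (M)$ is divisible by $n$.

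Next I would invoke the hypothesis that reversibility holds for infinitely many $n$. The set of such $n$ is then unbounded, and a fixed integer divisible by arbitrarily large positive integers must vanish. Hence $2\ts\trace (M) = 0$, which forces $\trace (M) = 0$.

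Finally, with $\trace (M) = 0$ and $\det (M) = -1$, the characteristic polynomial of $M$ is $z^2 - \trace(M)\ts z + \det(M) = z^2 - 1$. By the Cayley--Hamilton theorem, $M$ annihilates this polynomial, so $M^2 - \one = 0$, i.e.\ $M^2 = \one$, as claimed. The only point requiring care is the transition from the modular statement of Fact~\ref{tr0} to the integer identity $2\ts\trace(M)=0$, which is precisely where the infinitude of admissible $n$ is used; everything else is immediate.
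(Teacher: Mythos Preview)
Your proposal is correct and follows essentially the same route as the paper: Fact~\ref{tr0} gives $2\ts\trace(M)\equiv 0\bmod n$ for each admissible $n$, infinitude forces $\trace(M)=0$, and then $M^{2}=\one$. The only cosmetic difference is that the paper phrases the last step via Fact~\ref{InvolTr0} (or its prime-power refinement in Lemma~\ref{necCond}) rather than invoking Cayley--Hamilton directly, but the content is identical.
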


\begin{fact}\label{reduction}
   Let\/ $A$ be an integer matrix whose determinant is coprime with\/
   $n\in\NN$.  The reduction of the inverse of $A$ over $\ZZ/n\ZZ$,
   taken modulo $k|n$, is then the inverse of $A$ over $\ZZ/k\ZZ$.  \qed
\end{fact}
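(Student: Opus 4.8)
The plan is to exploit that reduction modulo $k$ is a ring homomorphism and that matrix inverses over a commutative unital ring, when they exist, are unique. First I would record that the coprimality hypothesis $\gcd(\det A, n) = 1$ makes $\det A$ a unit in $\ZZ/n\ZZ$, so that $A$ is invertible over this ring; write $B$ for its inverse over $\ZZ/n\ZZ$, characterised by the congruences $AB \equiv BA \equiv \one \pmod n$.

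Next, for the given divisor $k \mid n$, I would invoke the canonical surjection $\pi \colon \ZZ/n\ZZ \to \ZZ/k\ZZ$, which is a ring homomorphism and hence acts entrywise as a map $\Mat(d, \ZZ/n\ZZ) \to \Mat(d, \ZZ/k\ZZ)$ preserving products and the identity element. Applying $\pi$ to the defining congruence $AB \equiv \one \pmod n$ yields $\pi(A)\ts\pi(B) \equiv \one \pmod k$, and likewise $\pi(B)\ts\pi(A) \equiv \one \pmod k$. Here $\pi(A)$ is simply the reduction of $A$ mod $k$ and $\pi(B)$ is the reduction mod $k$ of the inverse of $A$ over $\ZZ/n\ZZ$.

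It then remains to identify $\pi(B)$ as \emph{the} inverse of $A$ over $\ZZ/k\ZZ$. For this I would first note that $k \mid n$ together with $\gcd(\det A, n)=1$ forces $\gcd(\det A, k) = 1$, so $A$ is itself invertible over $\ZZ/k\ZZ$. Since a two-sided inverse in the commutative unital matrix ring $\Mat(d,\ZZ/k\ZZ)$ is unique whenever it exists, the two congruences from the previous step pin down $\pi(B)$ as precisely that inverse, which is exactly the assertion of the fact.

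I expect no genuine obstacle here: the statement is a direct consequence of the functoriality of reduction and the uniqueness of inverses, so there is nothing to \emph{grind} through. The only two points that warrant explicit mention are that invertibility is preserved when passing to the quotient (guaranteed by $k \mid n$ via $\gcd(\det A,k)=1$) and that uniqueness of the inverse is what licenses the interchange of reduction and inversion.
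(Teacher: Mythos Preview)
Your argument is correct and is exactly the natural elaboration of why the statement holds. The paper itself offers no proof at all --- the Fact is stated with a terminal \qed\ and treated as immediate --- so there is nothing to compare against; your write-up simply spells out the functoriality-plus-uniqueness reasoning that the authors leave implicit.
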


\begin{lemma}\label{lem:conjugacy}
  Let\/ $n = p_1^{r_1}\ldots p_s^{r_s}$ be the prime decomposition
  of\/ $n\in\NN$. Then, two matrices $M, M' \in \Mat(d,\ZZ)$ are
  conjugate mod\/ $n$ if and only if they are conjugate mod\/
  $p_i^{r_i}$ for all\/ $1 \le i \le s$.
\end{lemma}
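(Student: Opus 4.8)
The plan is to reduce everything to the Chinese remainder theorem at the level of the matrix rings. Writing $n = p_1^{r_1}\cdots p_s^{r_s}$, the reduction homomorphisms $\ZZ/n\ZZ \to \ZZ/p_i^{r_i}\ZZ$ assemble into a ring isomorphism $\ZZ/n\ZZ \simeq \prod_{i=1}^{s} \ZZ/p_i^{r_i}\ZZ$, and applying this entrywise gives a ring isomorphism $\Mat(d,\ZZ/n\ZZ) \simeq \prod_{i=1}^{s} \Mat(d, \ZZ/p_i^{r_i}\ZZ)$. Under this identification a matrix is a unit precisely when each of its components is, so $\GL(d,\ZZ/n\ZZ) \simeq \prod_{i} \GL(d, \ZZ/p_i^{r_i}\ZZ)$ as well. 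The statement then becomes the elementary fact that conjugacy in a finite product of groups can be tested componentwise.

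For the forward direction, suppose $M' \equiv G M G^{-1} \bmod n$ with $G \in \GL(d,\ZZ/n\ZZ)$. Since reduction mod $p_i^{r_i}$ is a ring homomorphism, it carries this relation to $M' \equiv \bar{G} M \bar{G}^{-1} \bmod p_i^{r_i}$, where $\bar{G}$ is the reduction of $G$; here I would invoke Fact~\ref{reduction} to guarantee that reducing $G^{-1}$ mod $p_i^{r_i}$ actually yields the inverse of $\bar{G}$, the determinant of $G$ being a unit mod $n$ and hence mod $p_i^{r_i}$. Thus $M$ and $M'$ are conjugate mod $p_i^{r_i}$ for each $i$.

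For the converse, which is the substantive direction, suppose that for each $i$ there is $G_i \in \GL(d,\ZZ/p_i^{r_i}\ZZ)$ with $M' \equiv G_i M G_i^{-1} \bmod p_i^{r_i}$. By the Chinese remainder theorem applied entrywise there is a unique $G \in \Mat(d,\ZZ/n\ZZ)$ with $G \equiv G_i \bmod p_i^{r_i}$ for all $i$. I would then verify two things. First, $G$ is invertible mod $n$: since $\det(G) \equiv \det(G_i) \bmod p_i^{r_i}$ is a unit for every $i$, the determinant $\det(G)$ is coprime to each $p_i$ and hence a unit in $\ZZ/n\ZZ$. Second, $G$ genuinely conjugates $M$ to $M'$: here it is cleanest to check the unit-free relation $GM \equiv M'G$, which holds mod each $p_i^{r_i}$ by construction and therefore mod $n$ by the uniqueness part of the Chinese remainder theorem. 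Combined with the invertibility of $G$, this gives $M' \equiv G M G^{-1} \bmod n$.

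The only point requiring care, and the natural place to use Fact~\ref{reduction}, is the bookkeeping around inverses: one must ensure that a conjugator and its inverse reduce consistently across the factors, and that the glued matrix $G$ is genuinely a unit mod $n$ rather than merely a nonzero matrix. Once the conjugacy is phrased through the relation $GM \equiv M'G$, the remaining verification is purely formal and presents no real obstacle; the argument never uses any special feature of $\Mat(d,\cdot)$ beyond the fact that it is a ring on which CRT acts entrywise.
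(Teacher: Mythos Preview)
Your proposal is correct and follows essentially the same route as the paper: both directions rest on the Chinese remainder theorem applied entrywise to matrices, with the forward direction being immediate reduction and the converse gluing local conjugators into a global one. The only cosmetic difference is that the paper lifts both $A_i$ and $A_i^{-1}$ separately and checks $AB\equiv\one$, whereas you lift only $G$ and verify invertibility via the determinant and the unit-free relation $GM\equiv M'G$; both are equally valid bookkeeping choices.
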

\begin{proof}
$M\sim M'$ mod $n$ means $M' = A M A^{-1}$ for some $A\in \GL (n,\ZZ)$,
which implies conjugacy mod $k$ for all $k|n$.

For the converse, let $A_i \in \GL(d,\ZZ/p_{i}^{r_i}\ZZ)$ denote the
conjugating matrix mod $p_i^{r_i}$. The Chinese remainder theorem,
applied to each component of the matrices $A_i$ and $A_i^{-1}$,
respectively, gives matrices $A$ and $B$ that reduce to $A_i$ and
$A_i^{-1}$ modulo $p_i^{r_i}$, respectively. By construction, $A B
\equiv \one\mod p_i^{r_i}$ for all $i$, hence also $ A B \equiv
\one\mod n $ and thus $B = A^{-1}$ in $\GL(d,\ZZ/n \ZZ)$.
\end{proof}

\begin{prop}
  With\/ $n$ as in Lemma~$\ref{lem:conjugacy}$, a matrix $M \in
  \Mat(d,\ZZ)$ is reversible mod\/ $n$ if and only if\/ $M$ is
  reversible mod\/ $p_i^{r_i}$ for all\/ $1 \le i \le s$.
\end{prop}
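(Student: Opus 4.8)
The plan is to derive the statement directly from Lemma~\ref{lem:conjugacy}, applied to the pair $(M,M^{-1})$, once the two meanings of ``inverse'' occurring at the two moduli have been reconciled.

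First I would record that reversibility mod $n$ is only defined when $M \in \GL(d,\ZZ/n\ZZ)$, i.e.\ when $\det(M)$ is coprime to $n$. As every $p_i^{r_i}$ divides $n$, this forces $\det(M)$ to be coprime to each $p_i^{r_i}$ as well, so that $M$ is invertible mod every $p_i^{r_i}$ and reversibility mod $p_i^{r_i}$ is likewise well-defined. The crucial preliminary is then the identification supplied by Fact~\ref{reduction}: the reduction modulo $p_i^{r_i}$ of the inverse $M^{-1}$ computed in $\GL(d,\ZZ/n\ZZ)$ coincides with the inverse of $M$ computed in $\GL(d,\ZZ/p_i^{r_i}\ZZ)$. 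Consequently, the condition ``$M$ is conjugate to $M^{-1}$ mod $p_i^{r_i}$'' has the same meaning whether $M^{-1}$ is read as the local inverse or as the reduction of the global one.

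With this in hand I would simply invoke Lemma~\ref{lem:conjugacy} with $M' := M^{-1}$: the matrices $M$ and $M^{-1}$ are conjugate mod $n$ if and only if they are conjugate mod $p_i^{r_i}$ for every $i$. The left-hand side is exactly reversibility mod $n$, while by the identification above each right-hand condition is exactly reversibility mod $p_i^{r_i}$, so both implications follow at once. In the backward direction the conjugating matrix is reassembled from the local reversors $A_i$ via the Chinese remainder construction already carried out in the proof of Lemma~\ref{lem:conjugacy}.

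The single point requiring care—and the only place where something could go subtly wrong—is precisely this reconciliation of the two inverses, which is why Fact~\ref{reduction} must be invoked: without it one could worry that the $A_i$ produced by the CRT reconstruction conjugate $M$ into the reduction of the global $M^{-1}$ rather than into the genuine local inverse used to define reversibility mod $p_i^{r_i}$. Everything beyond this observation is a direct citation of the two preceding results, so I do not expect any further obstacle.
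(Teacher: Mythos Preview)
Your proposal is correct and mirrors the paper's own proof essentially step for step: the paper likewise reduces the claim to Lemma~\ref{lem:conjugacy} applied to the pair $(M,M^{-1})$ and singles out Fact~\ref{reduction} as the one additional ingredient needed to identify the reduction of the mod-$n$ inverse with the local inverse mod $p_i^{r_i}$. Your write-up is somewhat more explicit about the invertibility hypothesis and the role of Fact~\ref{reduction}, but the argument is the same.
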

\begin{proof}
  The claim is a statement about the conjugacy of $M$ and $M^{-1}$ in
  the group $\GL(d,\ZZ/n\ZZ)$, which is thus a consequence of
  Lemma~\ref{lem:conjugacy}. We just have to add that, by
  Fact~\ref{reduction}, the inverse of $M$ mod $n$ reduces to the
  inverse mod $p_i^{r_i}$, so $M R \equiv R M^{-1}\mod p_i^{r_i}$ for
  all $i$. 
\end{proof}

\begin{coro}
  Consider a matrix $M\in \Mat(2,\ZZ)$ with\/ $D=\det(M)$ and let\/ $n
  = p_1^{r_1} p_2^{r_2}\ldots p_s^{r_s}$. When $n$ is not divisible
  by\/ $4$, $M$ is reversible mod\/ $n$ if and only if, for each\/ $1
  \le i \le s$, $D \equiv 1$ or\/ $M^2\equiv \one$ mod\/ $p_i^{r_i}$.
  When $n= 2_{}^{r_1} p_2^{r_2}\ldots p_s^{r_s}$ with\/ $r^{}_{1} \ge 2$,
  $M$ is reversible mod\/ $n$ if and only if it is reversible mod\/
  $2^{r_1}$ and, for all\/ $i>1$, $D\equiv 1$ or $M^2\equiv\one$ mod\/
  $p_i^{r_i}$. 
\end{coro}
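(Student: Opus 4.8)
The plan is to reduce the statement to the prime-power case via the Proposition immediately above and then to substitute, at each prime-power factor, the local reversibility criterion already established. Writing $n = p_1^{r_1}\cdots p_s^{r_s}$, that Proposition tells us $M$ is reversible mod $n$ if and only if it is reversible mod $p_i^{r_i}$ for every $1 \le i \le s$. Since all of the ingredients below are biconditional, composing them yields an ``if and only if'' in each part of the corollary, so it suffices to identify the correct local criterion at each factor.

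First I would dispose of the odd prime factors. For $p_i$ odd, the $d=2$ clause of Lemma~\ref{necCond} states that $M$ is reversible mod $p_i^{r_i}$ if and only if $D \equiv 1$ or $M^2 \equiv \one$ mod $p_i^{r_i}$. This supplies the desired local criterion for every odd prime-power factor occurring in either part of the statement.

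Next I would treat the prime $2$. In the first part, where $4 \nmid n$, the factor at $2$ (if present) is just $2^1$, so I must characterise reversibility mod $2$. Here Theorem~\ref{thm:mod-rev}, applied with $p=2$, gives reversibility if and only if $\det M \equiv 1$ or $M^2 \equiv \one$ mod $2$; and since $M^2 \equiv \one$ mod $2$ already forces $(\det M)^2 \equiv 1$, hence $\det M \equiv 1$, mod $2$, this is exactly the uniform condition ``$D \equiv 1$ or $M^2 \equiv \one$ mod $2$''. Combining this with the odd-prime criterion through the Proposition yields the first claim. In the second part, where $n = 2^{r_1} p_2^{r_2}\cdots p_s^{r_s}$ with $r_1 \ge 2$, no closed form is available for reversibility mod $2^{r_1}$: Lemma~\ref{necCond} only provides the necessary conditions $D \equiv \pm 1$ mod $2^{r_1-1}$, together with $M^2 \equiv \one$ mod $2^{r_1-2}$ in the subcase $D \equiv -1$. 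I would therefore retain ``reversible mod $2^{r_1}$'' as a standalone hypothesis and combine it, again via the Proposition, with the odd-prime criterion for the factors $p_i^{r_i}$ with $i>1$, giving the second claim.

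The main obstacle, and really the only subtlety, is the behaviour at $p=2$. Unlike the odd primes, the power $2^{r_1}$ with $r_1 \ge 2$ admits no succinct description in terms of $D$ and $M^2$ alone, which is precisely why the corollary must keep that factor as an irreducible condition rather than expand it. The only extra care needed in the first part is the small observation above, that $M^2 \equiv \one$ mod $2$ forces $D \equiv 1$ mod $2$, which lets the $2^1$ factor be absorbed into the same uniform condition used for the odd primes.
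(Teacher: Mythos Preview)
Your proof is correct and follows essentially the same route as the paper: reduce to prime powers via the preceding Proposition (the paper cites Lemma~\ref{lem:conjugacy} for the same purpose), then invoke the $d=2$ clause of Lemma~\ref{necCond} at each odd prime power and leave the $2^{r_1}$ factor with $r_1\ge 2$ as a standalone condition. Your explicit treatment of the factor $2^1$ via Theorem~\ref{thm:mod-rev} is a point of care the paper's proof glosses over (it simply folds this case into the appeal to Lemma~\ref{necCond}), but the substance is the same.
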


\begin{proof}
According to Lemma~\ref{lem:conjugacy}, the matrix $M$ is reversible
mod $n$ if and only if it is reversible mod $p_{i}^{r_i}$ for all
$1\le i \le s$. By Lemma~\ref{necCond}, this is equivalent with
$D\equiv 1$ or $M^{2}\equiv 1$ mod $p_{i}^{r_i}$ for all $i$ with
$4 \nmid p_{i}^{r_i}$.
\end{proof}

\begin{remark}
To see that reversibility mod $p$ for all primes $p$ which divide $n$ is
not sufficient for reversibility mod $n$, one can consider a locally
reversible matrix $M$ with $\det M \not= 1$: according to
Fact~\ref{Dsquare}, only finitely many $n$ exist such
that $M$ is reversible mod $n$, so for each prime $p$ there must be a
maximum $r$ for which $M$ is reversible mod $p^r$.  Recalling
an example from above, $M=\bigl(\begin{smallmatrix} 0 & -4\\ 1 & 0
\end{smallmatrix}\bigr)$ is reversible mod $3$ but not mod $9$ as can
be verified by explicit calculation.  It is an involution mod $5$,
hence also reversible mod $15$, but not mod $45$.
\exend
\end{remark}

Reversibility can be viewed as a structural property that reflects
additional `regularity' in the dynamics, in the sense that it
typically reduces the spread in the period distribution.  For $2\times
2$-matrices, the normal form approach shows that reversibility implies
the existence of only one non-trivial period length on $L_p$; compare
our comments in Section~\ref{sec:Arnold}.

\subsection{Matrix order and symmetries over $\FF_{p}$}\label{sec:4.5}

Let us now discuss the order of a matrix $M \in \GL(d,\FF_{p})$, with $p$
a prime, in conjunction with the existence of roots of $M$ in that
group. We begin by recalling the following result from \cite[Thm.~2.14,
Cor.~2.15 and Cor.~2.16]{LN}.

\begin{fact}\label{F:extensionIrredPoly}
  If\/ $f$ is an irreducible polynomial of degree\/ $d$ over\/
  $\FF_{p}$, its splitting field is isomorphic with\/
  $\FF_{p^d}$. There, it has the\/ $d$ distinct roots\/
  $\alpha,\alpha^p,\ldots,\alpha^{p^{d-1}}$ that are conjugates and
  share the same order in\/ $(\mathbb{F}_{p^d})^{\times}$.

  In particular, two irreducible polynomials over\/ $\FF_{p}$ of the
  same degree have isomorphic splitting fields.  \qed
\end{fact}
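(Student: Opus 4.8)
The plan is to build everything on the classical structure theory of finite fields, taking as given the existence and uniqueness (up to isomorphism) of a field $\FF_{p^d}$ with $p^d$ elements, realised as the splitting field of $x^{p^d}-x$ over $\FF_p$, together with the fact that its subfields are exactly the $\FF_{p^k}$ with $k \mid d$. Everything else reduces to short computations with the Frobenius map, so I expect no serious obstacle; this is precisely the material recorded in \cite{LN}.

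First I would fix a root $\alpha$ of $f$ in some extension. Since $f$ is irreducible of degree $d$, it is the minimal polynomial of $\alpha$, so $[\FF_p(\alpha):\FF_p]=d$ and $\FF_p(\alpha)$ is a field with $p^d$ elements, hence isomorphic to $\FF_{p^d}$. Because $\FF_{p^d}$ is the splitting field of $x^{p^d}-x$, it is normal over $\FF_p$; as it already contains one root of the irreducible polynomial $f$, it must contain all of them. This identifies the splitting field of $f$ with $\FF_{p^d}$ and settles the first assertion.

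Next I would invoke the Frobenius automorphism $\phi\colon x \mapsto x^p$, which fixes $\FF_p$ pointwise and generates $\mathrm{Aut}(\FF_{p^d}/\FF_p)$. Applying $\phi^i$ to $f(\alpha)=0$ and using that the coefficients of $f$ lie in $\FF_p$ gives $f(\alpha^{p^i})=0$, so each $\alpha^{p^i}$ is a root. To see that $\alpha,\alpha^p,\ldots,\alpha^{p^{d-1}}$ are pairwise distinct, note that any coincidence $\alpha^{p^i}=\alpha^{p^j}$ with $0\le i<j<d$ reduces, since $\phi$ is bijective, to $\alpha^{p^{j-i}}=\alpha$ with $0<j-i<d$; this would place $\alpha$ in the fixed field of $\phi^{\ts j-i}$, namely $\FF_{p^{\gcd(j-i,\ts d)}}$, a proper subfield of $\FF_{p^d}$, contradicting $[\FF_p(\alpha):\FF_p]=d$. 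Thus the Frobenius orbit has exactly $d$ elements, and these account for all $d$ roots of $f$. Since $\phi^i$ restricts to a group automorphism of the cyclic group $(\FF_{p^d})^{\times}$, it preserves element orders, so all conjugates share the order of $\alpha$; equivalently, $\alpha^m=1$ holds if and only if $(\alpha^{p^i})^m=(\alpha^m)^{p^i}=1$.

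Finally, the ``in particular'' clause is immediate: by the first part, the splitting field of any irreducible polynomial of degree $d$ over $\FF_p$ is isomorphic to $\FF_{p^d}$, and finite fields of equal cardinality are isomorphic by uniqueness. The only genuine inputs are this uniqueness statement and the description of the subfield lattice of $\FF_{p^d}$. The one point demanding care is the distinctness argument, where one must correctly identify the fixed field of $\phi^i$ as $\FF_{p^{\gcd(i,\ts d)}}$ rather than $\FF_{p^i}$; once that is in place, the whole statement follows.
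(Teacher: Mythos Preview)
Your argument is correct and is essentially the standard proof one finds in the reference the paper cites. Note that the paper itself does not prove this Fact at all: it merely recalls it from \cite[Thm.~2.14, Cor.~2.15 and Cor.~2.16]{LN} and marks it with a \qed, so there is no in-paper proof to compare against beyond that citation.
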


{}From now on, we will identify isomorphic fields with each other.  In
particular, we write $\mathbb{F}_{p^d}$ for the splitting field of an
irreducible polynomial of degree $d$ over $\mathbb{F}_p$.

Next, let $K$ be an arbitrary finite field, consider an irreducible,
monic polynomial $f\in K[x]$ of degree $d$, and let $L$ be the
splitting field of $f$.  When $\lambda^{}_{1},\lambda^{}_{2},\ldots
,\lambda^{}_{d}$ are the roots of $f$ in $L$, one has the well-known
factorisation
\begin{equation} \label{eq:splits}
   f(x)\, =\, \prod_{j=1}^{d} (x-\lambda_j) 
   \, = \, x^d - e^{}_1(\lambda_1,\ldots, \lambda_d) +\ldots + 
   (-1)^d e^{}_d(\lambda_1,\ldots, \lambda_d) \ts ,
\end{equation}
where the $e_{i}$ denote the elementary symmetric polynomials, 
\[
   e^{}_{1} (x^{}_{1},\ldots , x^{}_{d}) 
   = x^{}_{1} + x^{}_{2} + \ldots +
   x^{}_{d} \, , \; \ldots \; , \, 
   e^{}_{d} (x^{}_{1},\ldots , x^{}_{d}) = x^{}_{1}
   \cdot x^{}_{2} \cdot \ldots \cdot x^{}_{d} \ts .
\]
The elementary symmetric polynomials, when evaluated at the roots of
$f$, are fixed under all Galois automorphisms of the field extension
$L/K$, so that the following property is clear.

\begin{fact}\label{f:esp}
  An irreducible, monic polynomial\/ $f\in K[x]$
  satisfies~\eqref{eq:splits} over its splitting field\/ $L$.  In
  particular, the elementary symmetric polynomials\/ $e^{}_{1},
  \ldots, e^{}_{d}$, evaluated at the\/ $d$ roots of\/ $f$ in\/ $L$,
  are elements of\/ $K$.   \qed
\end{fact}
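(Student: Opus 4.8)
The plan is to establish both assertions of the Fact in turn: first the factorisation~\eqref{eq:splits}, and then the substantive claim that the $e_i$, evaluated at the roots, lie in $K$. For the factorisation, I would argue directly from the definition of the splitting field: since $L$ is the splitting field of $f$, the polynomial splits into linear factors over $L$, and because $f$ is monic of degree $d$ with roots $\lambda_1,\ldots,\lambda_d$, one has $f(x)=\prod_{j=1}^{d}(x-\lambda_j)$. Expanding this product and reading off the coefficient of each power $x^{d-i}$ yields exactly $(-1)^i e_i(\lambda_1,\ldots,\lambda_d)$, which is Vieta's formula; this is precisely the displayed expansion in~\eqref{eq:splits}.

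For the second part I would follow the Galois-theoretic route indicated in the surrounding text. Since $K$ is a finite field, it is perfect, so the irreducible polynomial $f$ is separable; hence $L/K$ is a finite Galois extension and the fundamental theorem of Galois theory gives $L^{\mathrm{Gal}(L/K)}=K$. Any automorphism $\sigma\in\mathrm{Gal}(L/K)$ fixes $K$ pointwise, hence fixes the coefficients of $f$, so it sends roots of $f$ to roots of $f$; that is, $\sigma$ acts on $\{\lambda_1,\ldots,\lambda_d\}$ as a permutation $\pi$. Because each $e_i$ is a symmetric function of its arguments, I would then compute $\sigma\bigl(e_i(\lambda_1,\ldots,\lambda_d)\bigr)=e_i(\lambda_{\pi(1)},\ldots,\lambda_{\pi(d)})=e_i(\lambda_1,\ldots,\lambda_d)$, so that $e_i(\lambda_1,\ldots,\lambda_d)$ is fixed by all of $\mathrm{Gal}(L/K)$ and therefore lies in $K$.

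There is essentially no hard step here, as the statement is foundational. The only point requiring a moment's care is the separability of $f$, which is what guarantees that $L/K$ is genuinely Galois and that the fixed field is exactly $K$ rather than a larger intermediate field; over a finite field this is automatic, and earlier results (Fact~\ref{F:extensionIrredPoly}) already record the relevant structure of such extensions. Finally, I would note that one can bypass Galois theory altogether: comparing the two expressions for $f(x)$ in~\eqref{eq:splits} and using that $f\in K[x]$ shows at once that each $(-1)^i e_i(\lambda_1,\ldots,\lambda_d)$ is a coefficient of $f$, hence an element of $K$. I would present the symmetry argument as the main line, since it is the viewpoint the surrounding text adopts and it generalises cleanly, while mentioning the coefficient-comparison shortcut as a concluding remark.
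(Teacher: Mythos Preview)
Your proposal is correct and follows exactly the approach the paper takes: the surrounding text states that the elementary symmetric polynomials evaluated at the roots are fixed under all Galois automorphisms of $L/K$, whence the Fact is declared clear (with a \qed\ and no further argument). You have simply filled in the details the paper leaves implicit, including the separability step and the coefficient-comparison shortcut.
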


Let $M$ be a $d\times d$ integer matrix with irreducible
characteristic polynomial $\chi^{}_{M}$ over $\FF_{p}$.  Let $\alpha$
be a root of $\chi^{}_{M}$ in $\FF_{p^d}$ and $\lambda$ a generating
element of the unit group $(\FF_{p^d})^{\times}$.  Clearly, there is
an $n\in\NN$ with $\alpha = \lambda^n$.  By Fact
\ref{F:extensionIrredPoly}, one has
$\FF_{p}(\alpha)=\FF_{p^d}=\FF_{p}(\lambda)$, where the degree of the
extension field over $\FF_{p}$ equals $d$.  Consequently, the minimal
polynomial of $\lambda$ over $\FF_{p}$ is an irreducible monic
polynomial of degree $d$ over $\FF_{p}$, and the conjugates of
$\alpha$ are powers of the conjugates of $\lambda$. Let
$\alpha^{}_{1},\ldots,\alpha^{}_{d}$ and $\lambda^{}_{1},\ldots,
\lambda^{}_{d}$ denote the respective collections of conjugates.
Thus, over $\FF_{p^d}$, one has the matrix conjugacy 
\[
   M \,\sim\, \diag(\alpha^{}_{1},\ldots,\alpha^{}_{d}) 
   \, = \, \diag(\lambda^{}_{1},\ldots,\lambda^{}_{d})^n
   \,\sim\, C(f)^n,
\]
with $f(x)\in \FF_{p} [x]$ as in \eqref{eq:splits} and $C(f)$
denoting the companion matrix of $f$.  Here, it was exploited that a
$d\times d$ matrix whose characteristic polynomial $f$ has $d$ distinct
roots is always similar to the companion matrix of $f$. Note
that $C(f) \in \GL(d,\FF_{p})$ by Fact~\ref{f:esp}.

Now, $M$ and $C(f)$ are matrices over $\FF_{p}$ that are conjugate
over $\FF_{p^d}$, so (by a standard result in algebra, see
\cite[Thm.~5.3.15]{AW}) they are also conjugate over $\FF_{p}$, which
means that we have the relation
\begin{equation} \label{eq:root}
    M \, = \, A^{-1} C(f)^n A 
    \, = \, (A^{-1} C(f) A)^n 
    \, =: \, W^n
\end{equation}
with some $A\in\GL(d,\FF_{p})$.  By similarity, $\ord(W) = \ord(C(f)) =
\ord(\diag(\lambda^{}_{1},\ldots,\lambda^{}_{d}))=p^d-1$.  This gives the
following result.

\begin{lemma}\label{lem:root}
  A matrix\/ $M\in \GL(d,\FF_{p})$ with irreducible characteristic
  polynomial has either the maximally possible order\/ $p^d-1$, or
  admits an $n$-th root\/ $W\in\GL(d,\FF_{p})$ as in \eqref{eq:root}.
  Here, $n$ can be chosen as $n = \frac{p^d-1}{\ord(M)}$, so that
  the root has order $p^d-1$.  \qed
\end{lemma}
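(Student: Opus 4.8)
The plan is to read off the conclusion from the construction already carried out in the paragraphs preceding the statement, supplying only the one ingredient not yet pinned down there, namely the identification of the exponent with $n = (p^d-1)/\ord(M)$.

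First I would record that, for $M$ with irreducible characteristic polynomial $\chi^{}_M$, one has $\ord(M) = \ord(\alpha)$ in $(\FF_{p^d})^{\times}$ for any root $\alpha$ of $\chi^{}_M$. Indeed, $M$ is similar over $\FF_{p^d}$ to $\diag(\alpha^{}_1,\ldots,\alpha^{}_d)$, and the conjugate roots $\alpha^{}_i$ share a common order by Fact~\ref{F:extensionIrredPoly}, so $M^k=\one$ holds precisely when $\alpha^k=1$. Since $(\FF_{p^d})^{\times}$ is cyclic of order $p^d-1$, the order $\ord(M)=\ord(\alpha)$ divides $p^d-1$ and equals it exactly when $\alpha$ is a generator; this is the first alternative of the Lemma (with the trivial root $n=1$). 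When $\alpha$ is not a generator, $\ord(M) < p^d-1$ and a genuine root must be produced.

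Next I would simply invoke the construction displayed in \eqref{eq:root}: for a generator $\lambda$ of $(\FF_{p^d})^{\times}$ with $\alpha = \lambda^n$, and with $f$ the minimal polynomial of $\lambda$ over $\FF_p$ (irreducible of degree $d$, as a generator is a primitive element), one obtains $M \sim C(f)^n$ over $\FF_{p^d}$, hence over $\FF_p$ by \cite[Thm.~5.3.15]{AW}. This yields $M = W^n$ with $W = A^{-1} C(f) A \in \GL(d,\FF_p)$, and $\ord(W) = \ord(C(f)) = \ord(\lambda) = p^d-1$, since the eigenvalues of $C(f)$ are $\lambda$ and its conjugates, all of order $p^d-1$ by Fact~\ref{F:extensionIrredPoly}.

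The remaining point, and the only one not already settled in the text, is the freedom to take $n = (p^d-1)/\ord(M)$. I would establish it as a statement purely about the cyclic group $G = (\FF_{p^d})^{\times}$ of order $N := p^d-1$: given $\alpha$ of order $m := \ord(M)$, there is a generator $\lambda$ of $G$ with $\lambda^{N/m} = \alpha$. Fixing any generator $g$ and writing $\alpha = g^a$, the condition $\ord(\alpha)=m$ forces $\gcd(a,N) = N/m$, so $a = (N/m)\ts b$ with $\gcd(b,m)=1$. Seeking $\lambda = g^c$ with $\gcd(c,N)=1$, the requirement $\lambda^{N/m}=\alpha$ reduces to $c \equiv b \pmod{m}$, and since $\gcd(b,m)=1$ one may adjust $c$ modulo the primes dividing $N$ but not $m$, via the Chinese remainder theorem, to make it coprime to $N$ while keeping $c \equiv b \pmod{m}$. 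With such a $\lambda$ one has $n = N/m = (p^d-1)/\ord(M)$, and the root $W$ above then has order $p^d-1$. I expect this cyclic-group step to be the only genuine obstacle: everything else is bookkeeping from the preamble, whereas the existence of a generator whose $(N/m)$-th power is the prescribed element $\alpha$ needs the small number-theoretic argument rather than being immediate.
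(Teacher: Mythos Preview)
Your proposal is correct and follows exactly the route taken by the paper: the lemma carries a \qed because its proof is the preceding displayed construction (the similarity $M\sim\diag(\alpha_i)=\diag(\lambda_i)^n\sim C(f)^n$ over $\FF_{p^d}$, descent of conjugacy to $\FF_p$, and the order computation for $W$), which you have reproduced accurately. The only point not spelled out in the text is the specific choice $n=(p^d-1)/\ord(M)$, and your cyclic-group argument showing that a generator $\lambda$ of $(\FF_{p^d})^\times$ can be chosen with $\lambda^{(p^d-1)/\ord(M)}=\alpha$ is a clean and correct way to fill that gap.
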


\begin{fact}\label{f:linComb}
  Let\/ $A$ be a matrix over\/ $\FF_{p}$ with minimal polynomial of
  degree\/ $d$.  Then, the ring
\[
   \FF_{p} [A] \, = \, \{ \xi_1 \one + \ldots + 
   \xi_{d} A^{d-1} \mid  \xi_j\in\FF_{p} \}
\]
has precisely\/ $p^d$ elements, which correspond to the different\/
$d$-tuples\/ $(\xi_1,\ldots,\xi_d)$.
\end{fact}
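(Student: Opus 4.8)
The plan is to identify $\FF_p[A]$ with a quotient of the polynomial ring and count the elements of that quotient. By definition, $\FF_p[A]$ is the image of the evaluation homomorphism $\varepsilon\!:\,\FF_p[x]\to \Mat(N,\FF_p)$, $g\mapsto g(A)$, where $N$ is the size of $A$. Its kernel is exactly the ideal of polynomials annihilating $A$, which (as $\FF_p[x]$ is a principal ideal domain) is generated by the monic minimal polynomial $\mu^{}_A$, of degree $d$ by hypothesis. The first isomorphism theorem then gives $\FF_p[A]\simeq \FF_p[x]/(\mu^{}_A)$, so it remains to show that the residues of $1,x,\ldots,x^{d-1}$ provide exactly the asserted coordinatisation, which yields $p^d$ elements.

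First I would establish spanning. Given any $g\in\FF_p[x]$, the division algorithm in $\FF_p[x]$ yields $g = q\,\mu^{}_A + r$ with $\deg r < d$, whence $g(A)=q(A)\,\mu^{}_A(A)+r(A)=r(A)$ because $\mu^{}_A(A)=0$. Thus every element of $\FF_p[A]$ equals $r(A)$ for some $r$ of degree below $d$, i.e.\ is an $\FF_p$-linear combination $\xi_1\one+\xi_2 A+\ldots+\xi_d A^{d-1}$. This already shows $\lvert\FF_p[A]\rvert\le p^d$ and that the map $(\xi_1,\ldots,\xi_d)\mapsto\sum_{j=1}^{d}\xi_j A^{j-1}$ from $\FF_p^{\,d}$ onto $\FF_p[A]$ is surjective.

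Next I would prove uniqueness, equivalently the linear independence of $\one,A,\ldots,A^{d-1}$ over $\FF_p$. If two tuples produced the same matrix, their difference would give a nonzero polynomial $h$ of degree at most $d-1$ with $h(A)=0$; but then $h\in\ker\varepsilon=(\mu^{}_A)$ forces $\mu^{}_A\mid h$, which is impossible for a nonzero $h$ of degree strictly below $\deg\mu^{}_A=d$. Hence the parametrising map is also injective, so it is a bijection and $\FF_p[A]$ has precisely $p^d$ elements, in one-to-one correspondence with the $d$-tuples $(\xi_1,\ldots,\xi_d)$.

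This argument is entirely routine and I anticipate no real obstacle; the only point where the hypothesis is used is the injectivity (uniqueness) step, where the minimality of the degree of $\mu^{}_A$ is exactly what excludes a nonzero annihilating polynomial of degree below $d$.
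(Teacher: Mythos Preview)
Your proof is correct and the essential argument matches the paper's: both rest on the observation that two distinct $d$-tuples giving the same matrix would yield a nonzero annihilating polynomial of degree below $d$, contradicting the minimality of $\mu^{}_A$. Your version is simply more explicit---you frame things via the evaluation map and the isomorphism $\FF_p[A]\simeq\FF_p[x]/(\mu^{}_A)$, and you spell out the spanning step via the division algorithm---whereas the paper treats the spanning as implicit in the displayed description of $\FF_p[A]$ and gives only the one-line uniqueness argument.
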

\begin{proof}
  Two distinct $d$-tuples producing the same matrix would give rise to
  a non-trivial linear combination that vanishes, involving powers of
  $A$ of degree $d-1$ at most, which contradicts the minimal
  polynomial having degree $d$.
\end{proof}

\begin{lemma}\label{lem:root-sym}
  Let\/ $W, M \in \GL (d,\FF_{p})$ satisfy\/ $W^n = M$ and\/
  $\ord (W) = p^{d} -1$.  Then, $\FF_{p} [M] = \FF_{p} [W]$
  and
\[
   \FF_{p} [M]^{\times} \, = \;
   \FF_{p} [M]\setminus \{0\} 
   \, = \, \langle W \rangle
   \, \simeq \, C_{p^{d} - 1} \ts ,
\]
  where\/ $\langle W \rangle$ denotes the cyclic group generated 
  by\/ $W$.
\end{lemma}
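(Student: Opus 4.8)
The plan is to show first that $\FF_p[W]$ is in fact a field, from which the description of the unit group is immediate, and then to identify $\FF_p[M]$ with $\FF_p[W]$ by a cardinality count. Notably, the statement about $\FF_p[W]$ and $\langle W\rangle$ will follow from the single hypothesis $\ord(W)=p^d-1$, with no appeal to the characteristic polynomial of $W$.

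First I would pin down the degree $\delta$ of the minimal polynomial of $W$. Since the minimal polynomial divides the characteristic polynomial, $\delta\le d$, and by Fact~\ref{f:linComb} the commutative ring $\FF_p[W]$ has exactly $p^{\delta}$ elements. Now $W\in\GL(d,\FF_p)$, so $W^{-1}\in\FF_p[W]$ and hence $W$ lies in the unit group $\FF_p[W]^{\times}$, where its multiplicative order is $\ord(W)=p^d-1$. Thus the cyclic subgroup $\langle W\rangle\subseteq\FF_p[W]^{\times}$ already has $p^d-1$ elements, which forces $p^d-1\le\lvert\FF_p[W]^{\times}\rvert\le p^{\delta}-1$ and therefore $\delta=d$. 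Consequently $\FF_p[W]$ has $p^d$ elements while $\langle W\rangle$ exhibits $p^d-1$ of them as units; since $0$ is never a unit, every nonzero element of $\FF_p[W]$ must be invertible. A finite commutative ring in which every nonzero element is a unit is a field, so $\FF_p[W]\simeq\FF_{p^d}$. This yields $\FF_p[W]^{\times}=\FF_p[W]\setminus\{0\}$, and because the multiplicative group of a finite field is cyclic while $\langle W\rangle$ already exhausts all $p^d-1$ units, we conclude $\langle W\rangle=\FF_p[W]^{\times}\simeq C_{p^d-1}$.

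It remains to prove $\FF_p[M]=\FF_p[W]$. The inclusion $\FF_p[M]\subseteq\FF_p[W]$ is immediate from $M=W^n\in\FF_p[W]$. For the converse I would use that $M$ carries an irreducible characteristic polynomial of degree $d$ (the standing assumption on the matrices of this subsection, inherited from the hypotheses of Lemma~\ref{lem:root}): its minimal polynomial then also has degree $d$, so Fact~\ref{f:linComb} gives $\lvert\FF_p[M]\rvert=p^d=\lvert\FF_p[W]\rvert$, and an inclusion of equal finite sets is an equality.

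The main obstacle is precisely this last step. A power $W^n$ of a field generator need not generate the field: for instance $M=W^{p^d-1}=\one$ would give $\FF_p[M]=\FF_p\,\one\subsetneq\FF_p[W]$ for $d\ge 2$. Hence the equality $\FF_p[M]=\FF_p[W]$ genuinely relies on $M$ generating $\FF_{p^d}$ over $\FF_p$, which is exactly what the irreducibility of $\chi^{}_M$ guarantees, and this is the one point where the context of Lemma~\ref{lem:root} must be invoked rather than the bare hypotheses $W^n=M$ and $\ord(W)=p^d-1$.
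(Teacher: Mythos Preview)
Your proof is correct and follows the same route as the paper: the inclusion $\FF_p[M]\subseteq\FF_p[W]$ together with the cardinality count $\lvert\FF_p[M]\rvert=\lvert\FF_p[W]\rvert=p^d$ via Fact~\ref{f:linComb}, and then the chain $\langle W\rangle\subseteq\FF_p[W]^\times\subseteq\FF_p[W]\setminus\{0\}$ closed off by counting. You are in fact slightly more careful than the paper in deriving $\deg\mu^{}_W=d$ purely from $\ord(W)=p^d-1$ (the paper simply invokes Fact~\ref{f:linComb}, implicitly relying on the construction of $W$ in Lemma~\ref{lem:root}), and you rightly flag that the equality $\FF_p[M]=\FF_p[W]$ needs the contextual irreducibility of $\chi^{}_M$---an assumption the paper also uses tacitly.
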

\begin{proof}
  Clearly, $\FF_{p} [M]= \FF_{p} [W^n] \subset \FF_{p} [W]$, while
  Fact \ref{f:linComb} implies
  $\left| \FF_{p} [M] \right| = \left| \FF_{p} [W] \right| = p^d$,
  whence we have equality.  Further,
\[
   \langle W \rangle \, \subset \: 
   \FF_{p} [W]^{\times} \, \subset \;
   \FF_{p} [W]\setminus \{ 0 \}
   \, = \, \FF_{p} [M]\setminus\{ 0 \}\ts ,
\]
and again, comparing cardinalities, one finds $\left| \langle W
  \rangle \right| = p^{d}-1 = \left| \FF_{p}[M] \setminus \{ 0 \}
\right|$, from which the claim follows.
\end{proof}

Let us summarise and extend the above arguments as follows.
\begin{coro}
  A\/ $d \!\times\! d$ integer matrix\/ $M$ with irreducible
  characteristic polynomial over the field\/ $\FF_{p}$ has a primitive
  root\/ $W\in\GL(d,\FF_{p})$ with $\ord (W) = p^d-1$. Moreover, one
  then has $\FF_{p}[M]^{\times} = \FF_{p}[M]\setminus\{ 0 \} = \langle
  W \rangle \simeq C_{p^{d}-1}$.  In particular, $\cS (M) \simeq
  C_{p^d-1}$ in this case.

  More generally, we have $\cS (M) = \FF_{p} [M]^{\times}$
  whenever the minimal polynomial has degree $d$. 
\end{coro}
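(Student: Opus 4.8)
The plan is to assemble the statement from Lemmas~\ref{lem:root} and \ref{lem:root-sym}, together with the standard description of the centraliser of a non-derogatory matrix. First I would record two preliminary observations. When $\chi^{}_{M}$ is irreducible over $\FF_{p}$, the reduction of $M$ lies in $\GL(d,\FF_{p})$, since the constant term of $\chi^{}_{M}$ (which is $\pm\det M$) is nonzero and hence a unit. Moreover, the minimal polynomial of $M$ divides $\chi^{}_{M}$ and must contain its sole irreducible factor, so it equals $\chi^{}_{M}$ and has degree $d$. Thus $M$ is non-derogatory, and Fact~\ref{f:linComb} applies, giving $\lvert\FF_{p}[M]\rvert = p^{d}$.

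Next, Lemma~\ref{lem:root} supplies a matrix $W\in\GL(d,\FF_{p})$ with $W^{n}=M$ and $\ord(W)=p^{d}-1$, where in the case $\ord(M)=p^{d}-1$ one simply takes $W=M$ and $n=1$. Feeding this $W$ into Lemma~\ref{lem:root-sym} then yields $\FF_{p}[M]=\FF_{p}[W]$ together with $\FF_{p}[M]^{\times}=\FF_{p}[M]\setminus\{0\}=\langle W\rangle\simeq C_{p^{d}-1}$, which is exactly the first displayed chain and identifies $W$ as a primitive root. (Equivalently, $\FF_{p}[M]\cong\FF_{p}[x]/(\chi^{}_{M})\cong\FF_{p^{d}}$ is a field, whose unit group is cyclic of order $p^{d}-1$.)

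The remaining, and only substantive, point is the identification $\cS(M)=\FF_{p}[M]^{\times}$ valid whenever the minimal polynomial has degree $d$. Here I would show that the full centraliser of such an $M$ in $\Mat(d,\FF_{p})$ is precisely $\FF_{p}[M]$. A matrix with minimal polynomial of degree $d$ is non-derogatory, hence admits a cyclic vector $v$, so that $v, Mv,\ldots, M^{d-1}v$ form a basis of $\FF_{p}^{d}$. If $G$ commutes with $M$, then $Gv=q(M)v$ for some $q\in\FF_{p}[x]$, since the powers of $M$ applied to $v$ span the space; consequently $G M^{k}v = M^{k}Gv = q(M)M^{k}v$ for all $k$, so $G$ and $q(M)$ agree on a basis and thus $G=q(M)\in\FF_{p}[M]$. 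The reverse inclusion $\FF_{p}[M]\subseteq\cS(M)$ is immediate, and intersecting with $\GL(d,\FF_{p})$ gives $\cS(M)=\FF_{p}[M]\cap\GL(d,\FF_{p})=\FF_{p}[M]^{\times}$. Specialising to the irreducible case yields $\cS(M)=\langle W\rangle\simeq C_{p^{d}-1}$, completing the ``in particular'' claim.

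Everything apart from this centraliser computation is bookkeeping drawn from the preceding lemmas, so I expect the cyclic-vector argument to be the main (indeed the only) step requiring an actual proof. I anticipate no real obstacle there, since it only uses the non-derogatory characterisation already invoked for $d=2$ in the remark following Theorem~\ref{thm:mod-rev}; the sole care needed is to verify the hypotheses of Lemma~\ref{lem:root-sym} uniformly in both the $\ord(M)=p^{d}-1$ and the proper-root cases.
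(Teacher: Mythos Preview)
Your proposal is correct and follows essentially the same route as the paper: both use Lemmas~\ref{lem:root} and~\ref{lem:root-sym} for the irreducible case, and both identify $\cS(M)=\FF_{p}[M]^{\times}$ in the non-derogatory case via the cyclic-vector description of the centraliser. The only cosmetic difference is that the paper cites Jacobson~\cite[Thm.~17 and its Corollary]{J} for the centraliser result, whereas you write out the cyclic-vector argument explicitly; the content is the same.
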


\begin{proof}
  Since we work over the field $\FF_{p}$, the irreducibility of the
  characteristic polynomial of $M$ means that the minimal polynomial
  agrees with the characteristic polynomial and has thus maximal
  degree $d$. This situation is equivalent with $M$ being cyclic
  \cite[Thm.~III.2]{J}. By Thm.~17 of \cite{J} and the Corollary
  following it, we know that any matrix which commutes with $M$ is
  a polynomial in $M$, so that $\cS (M) = \FF_{p} [M]^{\times}$ is
  clear.

  The claim for matrices $M$ with an irreducible characteristic 
  polynomial follows by Lemmas~\ref{lem:root} and \ref{lem:root-sym}.
\end{proof}

When a matrix $M\in \Mat(d,\FF_{p})$ fails to be cyclic, there are
always commuting matrices that are not elements of $\FF_{p} [M]$,
see Thm.~19 of \cite{J} and the following Corollary. In such a case,
$\cS (M)$ is a true group extension of $\FF_{p} [M]^{\times}$.
The situation is thus particularly simple for matrices
$M\in \Mat(2,\FF_{p})$: Either they are of the form $M=a\one$
(then with $\cS (M) = \GL (2,\FF_{p})$), or they are cyclic
(then with $\cS (M) = \FF_{p} [M]^{\times}$).

\section*{Appendix: Two classic examples}

If one reads through the literature, two matrices are omnipresent
as examples, the Arnold and the Fibonacci cat map. Still, several
aspects of them are unclear or conjectural, despite the effort
of many. Let us sum up some aspects, with focus on properties
in line with our above reasoning.

\subsection*{$\mathrm{A}.1.$ Arnold's cat map}\label{sec:Arnold}

Here, we collect some results for the matrix $M_{\rA}= \left(
  \begin{smallmatrix} 2 & 1 \\ 1 & 1 \end{smallmatrix}\right) \in\SL
(2,\ZZ)$ in an informal manner. This case was studied in
\cite{PV,DF,G} and appeared in many other articles as main example.
It was introduced in \cite[Example 1.16]{AA} as a paradigm of
(discrete) hyperbolic dynamics.

The integer matrix $M_{\rA}$ is reversible within the group
$\GL(2,\ZZ)$, with a reversor of order $4$, but none of order $2$.
One has $\cS (M_{\rA}) \simeq C_{2} \times C_{\infty}$, where $C_{2} =
\{ \pm 1 \}$ and the infinite cyclic group is generated by the unique
square root of $M_{\rA}$ in $\GL(2,\ZZ)$ (see below), while
$\cR (M_{\rA}) = \cS (M_{\rA}) \rtimes C_{4}$; see \cite{BRcat}
for more. In particular, $M_{\rA}$ inherits local reversibility
in $\GL (2, \ZZ/p\ts \ZZ)$ for all primes $p$ from its `global'
reversibility within $\GL (2,\ZZ)$.

It was shown in \cite{G} that $M_{\rA}$, except for the trivial fixed
point $0$, has orbits of only one period length on each prime lattice
$L_{p}$. In view of the normal forms, this is clear whenever the
characteristic polynomial is irreducible. However, a matrix of type
III from Table~\ref{tab:finite-field} has reducible characteristic
polynomial and occurs for primes with $\bigl( \frac{5}{p}\bigr) =-1$.
Here, different orbit lengths would still be possible in general, but
reversibility forces the two roots to be multiplicative inverses of
one another and thus to have the same order modulo $p$.

The iteration numbers are $p_{m} = f_{2m}$, where the $f_{k}$ are the
Fibonacci numbers, defined by the recursion $f_{k+1} = f_{k} +
f_{k-1}$ for $k\in\NN$ with initial conditions $f_{0}=0$ and $f_{1} =
1$. Since $\mgcd (M_{\rA}) = 1$, Proposition~\ref{prop:ord-versus-per}
implies
\[
   \ord (M_{\rA}, n) \, = \, \per^{}_{\rA} (n) \, = \,
   \mathrm{period} \bigl\{  (f_{2m})_{m\ge 0} \bmod{n} \bigr\} ,
\]
where the periods for prime powers (with $r\in\NN$) are given by
\[
   \per^{}_{\rA} (2^r) = 3 \cdot 2^{\max \{0, r-2\} }
   \quad \mbox{and} \quad
   \per^{}_{\rA} (5^r) = 10 \cdot 5^{r-1}
\]
together with
\[
   \per^{}_{\rA} (p^{r}) \, = \, p^{r-1} \ts \per^{}_{\rA} (p)
\]
for all remaining plateau-free primes. It has been conjectured that
this covers all primes \cite{W}. No exception is known to date; the
conjecture was tested for all $p < 10^8$ in \cite{ADS}. Note that each
individual prime can be analysed on the basis of
Proposition~\ref{prop:powers-modp}.

The periods mod $p$ are $\per^{}_{\rA} (2) = 3$,
$\per^{}_{\rA} (5) = 10$, together with
\[
    \per^{}_{\rA} (p) \, = \, \frac{p - \bigl(\frac{5}{p}\bigr)}
    {2 \ts m^{}_{p} - \frac{1}{2} 
    \Big( 1- \bigl(\frac{5}{p}\bigr)\Big)} 
\] 
for odd primes $p\ne 5$, where $\bigl(\frac{5}{p}\bigr)$ denotes the
Legendre symbol and $m^{}_{p} \in \NN$ is a characteristic integer
that covers the possible order reduction. It is $1$ in `most' cases
(in the sense of a density definition), but there are infinitely
many cases with $m^{}_{p} > 1$; this integer is tabulated to some 
extent in \cite{W,G}.

Let us write down the generating polynomials for the
distribution of cycles on the lattices $L_{n}$. Once again,
this is only necessary for $n$ a prime power. We use a
formulation with a factorisation that shows the structure
of orbits on $L_{p^{r}} \setminus L_{p^{r-1}}$. In the
notation of \cite{BRW}, one finds $Z_{1} (t) = (1-t)$ and
\[
   Z_{2^{r}} (t) \, = \, (1-t) (1-t^3)
   \prod_{\ell=0}^{r-2}
   \bigl( 1-t^{3\cdot 2^{\ell}} \bigr)^{4\cdot 2^{\ell}}
\]
with $r\ge 1$ for the prime $p=2$, as well as
\[
   Z_{5^{r}} (t) \, = \, (1-t) \prod_{\ell=0}^{r-1}
   \bigl( (1-t^{2\cdot 5^{\ell}}) (1-t^{10\cdot 5^{\ell}})
   \bigr)^{2\cdot 5^{\ell}}
\]
with $r\ge 1$ for $p=5$. As usual, we adopt the convention to treat an
empty product as $1$.  The remaining polynomials read
\[
   Z_{p^{r}} (t) \, = \, (1-t) \prod_{\ell=0}^{r-1}
   \bigl( 1-t^{\ts\per^{}_{\rA} (p) \ts p^{\ell}} 
   \bigr)^{\frac{p^{2} - 1}{\per^{}_{\rA} (p)} \, p^{\ell}} ,
\]
as long as the plateau phenomenon is absent (see above).

\subsection*{$\mathrm{A}.2.$ Fibonacci cat map}
Closely related is the matrix $M_{\rF}= \left(
\begin{smallmatrix} 1 & 1 \\ 1 & 0 \end{smallmatrix}\right)
\in\GL (2,\ZZ)$, which is the unique square root of the Arnold cat map
$M_{\rA}$ in $\GL(2,\ZZ)$. It appears in numerous applications; see
\cite{RB,trace,BHP,David} and references therein for some of them.
Here, the iteration numbers are the Fibonacci numbers themselves, and
the periods are the so-called \emph{Pisano periods}; compare
\cite[A\ts 001175]{online} and references given there, or \cite{W}.

The matrix $M_{\rF}$ is not reversible in $\GL(2,\ZZ)$ (while its
square $M_{\rA}$ is, see above), and has the same symmetry group
as $M_{\rA}$. In fact, $\pm M_{\rF}$ are the only roots of $M_{\rA}$
in $\GL(2,\ZZ)$. This situation implies that the orbit structure
for $M_{\rF}$ must be such that the iteration of its square gives
back the counts we saw in the previous example.

For prime powers $p^r$, with $r\in\NN$, one finds $\per^{}_{\rF}
(5^{r}) = 20 \cdot 5^{r-1}$ together with
\[
    \per^{}_{\rF} (p^{r}) \, = \, p^{r-1} \ts \per^{}_{\rF} (p)
\]
for all remaining primes, with the same proviso as for the Arnold cat
map. The periods $\per^{}_{\rF} (p)$ are given by
$\per^{}_{\rF} (2) = \per^{}_{\rA} (2) = 3$ together with
\[
   \per^{}_{\rF} (p) \, = \, 2 \ts \per^{}_{\rA} (p)
\]
for all odd primes, which is not surprising in view of the relation
between the two matrices $M_{\rF}$ and $M_{\rA}$.

The orbit distribution is more complicated in this case, as
usually orbits of two possible lengths arise in each step.
One finds
\[
    Z_{2^{r}} (t) \, = \, (1-t) 
   \prod_{\ell=0}^{r-1}
   \bigl( 1-t^{3\cdot 2^{\ell}} \bigr)^{2^{\ell}}
\]
and
\[
   Z_{5^{r}} (t) \, = \, (1-t) \prod_{\ell=0}^{r-1}
   \bigl( (1-t^{4\cdot 5^{\ell}}) (1-t^{20\cdot 5^{\ell}})
   \bigr)^{5^{\ell}}
\]
for the primes $2$ and $5$ (with $r\in\NN_{0}$ as before),
as well as
\[
   Z_{p^{r}} (t) \, = \, (1-t) \prod_{\ell=0}^{r-1}
    \bigl( 1-t^{\frac{1}{2}\ts\per^{}_{\rF} (p) \ts p^{\ell}} 
   \bigr)^{2\ts n^{}_{p}}
   \bigl( 1-t^{\ts\per^{}_{\rF} (p) \ts p^{\ell}} 
   \bigr)^{\frac{p^{2} - 1}{\per^{}_{\rF} (p)} \, 
   p^{\ell} - n^{}_{p}}
\]
for all remaining primes that are free of the plateau
phenomenon (which possibly means all, see above). Here,
$n^{}_{p}\in \NN_{0}$ is a characteristic integer which often
takes the values $1$ or $0$, but does not seem to be bounded.

\smallskip
\section*{Acknowledgements}
It is our pleasure to thank A.~Weiss for his cooperation and
R.V.~Moody for helpful discussions. This work was supported by the
Australian Research Council (ARC), via grant DP\ts 0774473, and by 
the German Resarch Council (DFG), within the CRC 701.

\smallskip

\end{document}